\documentclass[a4paper,11pt]{article}
\usepackage{graphicx} % Required for inserting images
\usepackage[english]{babel}
\usepackage[utf8]{inputenc}
\usepackage{amsmath}
\numberwithin{equation}{section}
\usepackage{amsthm}
\usepackage{amssymb}
\usepackage{url}
\usepackage{hyperref}
\usepackage{mathabx}
\usepackage{biblatex}
\usepackage{enumerate}
\usepackage{appendix}
\usepackage{float}
\usepackage[colorinlistoftodos]{todonotes}
\usepackage[skip=5pt, indent=20pt]{parskip}
\usepackage{setspace}
\usepackage{multirow}
\usepackage[font=small,labelfont=bf]{caption}
\usepackage{geometry}
\newtheorem{lemma}{Lemma}[section]
\newtheorem{theorem}{Theorem}[section]
\newtheorem{remark}{Remark}[section]
\newtheorem{corollary}{Corollary}[section]

\allowdisplaybreaks[4]

\begin{document}

    \title{A Fully Discrete Local Discontinuous Galerkin Method for Quasilinear Stochastic Convection-Diffusion-Type Equations}
%\author{Yiming Chen}
    \author{
            Yiming Chen\thanks{
			Department of Mathematics, The Ohio State University, Columbus, OH 43210,
			USA. Email: \href{mailto:chen.11042@osu.edu}{chen.11042@osu.edu}. }
            \and
            Yunzhang Li\thanks{
			Research Institute of Intelligent Complex Systems, Fudan University, Shanghai 200433, P.R. China. Email: \href{mailto:li_yunzhang@fudan.edu.cn}{li\_yunzhang@fudan.edu.cn}. }
            \and 
			Yulong Xing\thanks{
			Department of Mathematics, The Ohio State University, Columbus, OH 43210,
			USA. Email: \href{mailto:xing.205@osu.edu}{xing.205@osu.edu}. } 
		    }

\date{\vspace{-2ex}}

\singlespacing
%\onehalfspacing

\pagenumbering{arabic}

\maketitle

\begin{abstract}
In this paper, we develop and analyze a fully discrete local discontinuous Galerkin (LDG) method with IMEX-Euler time discretization for a class of multi-dimensional quasilinear stochastic convection-diffusion-type equations driven by multiplicative $\mathcal Q$-Wiener noise. The leading diffusion matrix may depend on the solution as well as the spatial and temporal variables, while the lower-order drift and noise coefficients may depend on both the solution and its gradient. 
Under a suitable stochastic parabolicity condition, we establish unconditional high-moment stability estimates for the fully discrete scheme in the quasilinear setting. In the semilinear setting, where the leading diffusion matrix is independent of the solution but may vary in space and time, we further prove optimal high-moment strong error estimates of order $\mathcal O(h^{r+1})$ in space and $\mathcal O(k^{1/2})$ in time. A pathwise error estimate is then derived by combining the high-moment error bound with a discrete Kolmogorov argument. 
Numerical experiments are presented to illustrate the stability and convergence properties of the proposed method.
\end{abstract}
\vspace{2ex}

\begin{section}{Introduction}
In this paper, we develop and analyze a high order discontinuous Galerkin (DG) method for the quasilinear stochastic convection-diffusion-type equation driven by infinite-dimensional multiplicative noise, with a nonlinear solution-dependent leading diffusion matrix:
\begin{gather}
\label{spde:conv-diff}
    \begin{cases}
        \mathrm{d}u = \left\{\nabla \!\cdot\! [A(\cdot,u)\nabla u] + \psi(\cdot,u,\nabla u) \right\}\mathrm{d}t + g(\cdot,u,\nabla u) \mathrm{d}W_t,  & (\omega,\mathbf{x},t)\in \Omega\times \mathcal{D} \times (0,T], \\
        u(\cdot,0) = u_0,  & (\omega,\mathbf{x}) \in \Omega \times \mathcal{D}.
    \end{cases}
\end{gather}
where $\mathcal{D}=\mathbb{T}^d$ is the $d$-dimensional torus with periodic boundary conditions,  $A=\{a_{ij}(\omega,\mathbf{x},t,u)\}$ is a symmetric, semi-positive definite matrix, and $\{W_t\}_{t\in[0,T]}$ is a space-time $Q$-Wiener process of the form
\[ W_t = W_t(\mathbf{x},\omega) = \sum_{m=1}^{\infty} \sqrt{\gamma_m} e_m(\mathbf{x})\mathcal{B}_m(t,\omega) \quad \mathbf{x}\in \mathcal{D}, \]
where $\{e_m \}$ is an orthonormal basis of $L^2(\mathcal{D})$, $\{ \mathcal{B}_m\}$ is a sequence of independent standard Brownian motions defined on the filtered probability space $(\Omega, \mathcal{F},\{ \mathbb{F}_t\}_{0 \leq t \leq T}, \mathbb{P})$, and $\mathcal{Q}$ is a symmetric Hilbert-Schmidt operator on $L^2(\mathcal{D})$ satisfying $\mathcal{Q} e_m = \gamma_m e_m$, $\gamma_m > 0$, and $\mathrm{Tr}(\mathcal{Q}) := \sum_{m=1}^{\infty} \gamma_m < \infty$.

Convection-diffusion-type equations form a fundamental class of partial differential equations (PDEs) describing the combined effects of transport, diffusion, and reaction, with important applications in fluid dynamics, chemical reaction processes, and pollutant transport etc. However, for many realistic situations, deterministic models often fail to capture the intrinsic uncertainties arising from environmental fluctuations and random forcing. This motivates the study of stochastic PDEs (SPDEs), in which noise terms are incorporated into the original dynamics. Equation \eqref{spde:conv-diff} is a quasilinear second-order stochastic parabolic equation, and such problems have been studied extensively over the past several decades.
Two widely known approaches for the study of SPDEs are the semigroup method \cite{Chow2007_Parabolic_SPDEs, Prato_Zabczyk_2014} and the variational approach \cite{Krylov_Rozovskii_1981, Prevot_Rockner2007}. For the existence and uniqueness of solutions of stochastic parabolic PDEs, we refer the reader to \cite{Chow2007_Parabolic_SPDEs, Prato_Zabczyk_2014} for linear and semilinear equations, and to \cite{Debussche_SIAMMA2015, DuLiu_2019Cauchy, Krylov_AMS1999, Pardoux_Peng_1994} for quasilinear equations. Since analytic solutions to SPDEs are unavailable in most cases, it is an important but challenging task to design efficient and accurate numerical algorithms to approximate their solutions. Considerable progress has been made on numerical methods for stochastic parabolic PDEs, including finite difference \cite{AntonCohen_IMA2020, gyongy_1999Lattice, Millet_Morien_2005, Sauer_Mathcomp2015}, finite element \cite{Debussche2009weak, DuZhang_SIAMNu2002, Walsh_2005_finite_element, Yan_SIAMNu2005, XLV2026} and spectral methods \cite{Brehier_JCM2018, Conus_2019AnnalsAP, Jentzen_Kloeden_2009PRSA, Jentzen_IMA2020, Lord_Shardlow_2007_postprocessing}.

In this paper, we focus on DG methods, which are high-order finite element methods using discontinuous piecewise polynomial spaces. Their applications to hyperbolic conservation laws  were carried out by Cockburn \textit{et al.} in a series of papers \cite{CockburnShu_I_M2AN_1991, cockburnShu_II_Mathcomp1989, cockburnShu_III_JCP1989, cockburnHouShu_IV_Mathcomp1990, CockburnShu_V_JCP1998}. Compared to standard finite element methods, DG methods possess several advantages, including locally high-order approximation, flexibility on complex geometries, efficient $h$-$p$ adaptivity, and suitability for parallel implementations. For time-dependent equations containing higher-order spatial derivatives, including convection-diffusion-type equations, the higher-order operators require additional treatment within the DG framework. Among the various DG formulations developed for such models, the local discontinuous Galerkin (LDG) methods \cite{CockburnShu_SIAMNu1998} have been  applied and analyzed for many deterministic problems, including convection-diffusion equation \cite{Castillo_Cockburn_Mathcomp2002}, KdV-type equations \cite{XuShu_CMAME2007, YanShu_SIAMNu2002}, wave equation \cite{ChouShuXing_JCP2014, XingChouShu_2013energy} and time dependent fourth-order problems \cite{DongShu_SIAMNu2009}, where $L^2$-stability and high-order error estimates are established using Gauss-Radau projections and carefully designed numerical fluxes. More recently, there has been work on the analysis of fully-discrete LDG schemes for deterministic convection-diffusion equations coupled with IMEX Runge-Kutta time stepping \cite{WangShuZhang_SIAMNu2015, WangWangZhangShu_ESAIM2016}. By discretizing the nonlinear convection term explicitly and the diffusion term implicitly, IMEX schemes provide a good balance between computational efficiency and stability.  

There has also been growing interest in applying DG methods for SPDEs. They have been applied to stochastic Schr\"odinger equation \cite{ChenHongJi_IMA2016}, wave equation \cite{HongHouSun_JCP2022, Li_Wu_Xing_2022}, Allen-Cahn equation \cite{YangZhaoZhao_CiCP2024}, Cahn-Hilliard equation \cite{LiQinMingWang_CMA2018, zhouLi_CiCP2022_LDG}, backward SPDEs \cite{Li_SIAMFM2022}, and stochastic parabolic equations \cite{LiShuTang2021_ESAIM, YangZhaoZhao2023_NMPDE}, etc. In particular, Sun \textit{et al.} \cite{SunShuXing_JCP2022, SunShuXing_M2AN2023} proposed LDG methods for stochastic Maxwell equations driven by additive and multiplicative noises, where the methods are shown to preserve the linear growth of the stochastic energy and multi-symplectic structure, and optimal error estimates were analyzed for the semi-discrete method. Li \textit{et al.} developed and analyzed one-dimensional DG, ultra-weak DG and LDG methods for stochastic conservation laws \cite{LiShuTang2020_SISC}, stochastic KdV equation \cite{LiShuTang_JSC2020} and stochastic parabolic equation \cite{LiShuTang2021_ESAIM}. In these works, semi-discrete stability estimates were established for quasilinear problems, whereas optimal error estimates were obtained only in semilinear settings. In particular, the one-dimensional optimal error estimates were carried out only for stochastic KdV and parabolic equations with constant leading coefficients. In multiple dimensions, especially for unstructured meshes, the corresponding fully-discrete error estimates remain less developed. 
Another difficulty arises from time discretization. In contrast with deterministic problems, classical high-order Runge-Kutta methods cannot be applied directly in the strong sense because of the low temporal regularity of Brownian paths; see \cite{Kloeden_PLaten_2007} for a systematic discussion of strong and weak approximations for stochastic ordinary differential equations. It is therefore interesting to investigate fully-discrete DG schemes for SPDEs coupled with practical time-stepping methods.

The present paper extends the one-dimensional semi-discrete analysis in \cite{LiShuTang2021_ESAIM}, which treats equations driven by standard Brownian motion, to a two-dimensional fully-discrete LDG-IMEX-Euler approximation of stochastic convection-diffusion-type equations driven by multiplicative spatially colored \(\mathcal Q\)-Wiener noise on Cartesian meshes. 
The main contributions of the paper are fourfold. 
First, in the quasilinear setting, we establish fully-discrete stability estimates for the numerical solution. This is nontrivial because the leading diffusion operator depends on the solution itself, while the stochastic diffusion term may depend on both the solution and its gradient. As a result, the fully-discrete energy argument must balance the coercive contribution of the implicit diffusion term against the lower-order drift and stochastic terms, which leads to a nontrivial condition on the stochastic parabolicity constant.
Second, in the semilinear setting, where the leading diffusion matrix is independent of $u$, we prove optimal strong error estimates of order $\mathcal O(h^{r+1})$ in space and $\mathcal O(k^{1/2})$ in time. In particular, the leading coefficient matrix is allowed to depend on the spatial and temporal variables, which goes beyond the constant-coefficient setting treated in earlier one-dimensional analyses. The main difficulty in the proof is that the analysis is fully discrete, so the spatial approximation error, the temporal discretization error, and the stochastic error must be controlled simultaneously in a unified argument. In two space dimensions, this is further complicated by additional LDG coupling terms involving the auxiliary variables and projection errors, which must be estimated carefully in order to recover the optimal rates.
Third, we extend both the stability and error analysis to higher moments, thereby obtaining higher-moment fully-discrete stability and optimal higher-moment error estimates. This requires a careful combination of discrete stochastic estimates and parameter choices in order to close the higher-moment bounds. 
Fourth, by combining the higher-moment error bound with a discrete Kolmogorov argument, we derive a pathwise error estimate for the fully-discrete method, in a manner analogous to the pathwise analysis for stochastic Navier-Stokes equations in \cite{FengVo_CiCP2024}. 
Throughout the paper, we carry out the analysis in two space dimensions in order to present the main ideas as clearly as possible; the extension to higher-dimensional Cartesian meshes is straightforward. 
%We refer the reader to \cite{chen_semi-linear_1d} for the corresponding one-dimensional semilinear analysis. 
The treatment of unstructured meshes, as well as higher-order time discretizations such as those in \cite{XLV2026}, is left for future work.

%In model \eqref{spde:conv-diff}, we take into account the space-time mixed color noise rather than the standard Brownian motion considered in \cite{LiShuTang2020_SISC, LiShuTang_JSC2020, LiShuTang2021_ESAIM}. Moreover, we show the fully-discrete stability estimate for the quasilinear equation and optimal error estimates for semi-linear equation with variable leading coefficient. We further extend our analytic estimations to higher moments of the numerical solution and error. The pathwise error estimate is obtained with the help of the proposed discrete Kolmogorov lemma \ref{lem:Kolmogorov}. This is similar to the high moment and pathwise error estimates for stochastic Navier-Stokes equation in \cite{VoFeng2023_CICP}. Lastly, we present all analytic results in two-dimensional space on cartesian meshes, which can be generalized to higher-dimensional spaces straightforwardly. We refer the reader to \cite{chen_semi-linear_1d} for the analysis of 1D version of the model \eqref{spde:conv-diff-2d}. The corresponding fully nonlinear equation is analyzed in \cite{chen_non-linear_1d, chen_non-linear_2d}. The stability and error estimations on unstructured meshes, or for higher-order time discretizations are left for future work.  

Throughout this paper, we assume that $K := \sum_{i=1}^{\infty} \gamma_i \| e_i(x)\|_{\infty}^2$ is finite for the purpose of stability and error analysis. We denote by $(\cdot, \cdot)$ the $L^2$ inner product on $\mathcal{D}$. Given an integer $r \in \mathbb{Z}_+$, we use $\| \cdot\|$ and $\| \cdot\|_r := \| \cdot\|_{H^r}$ to denote the $L^2$ and $H^r$ norm on $\mathcal{D}$ with respect to the spatial variable $x \in \mathcal{D}$. 
We denote by the spaces $ L^{2}(\Omega \times [0,T]; H^{r})$ and $L^{2}(\Omega,L^{\infty}[0,T; H^{r}])$ the space of all adapted strongly continuous processes $\phi: \Omega \times [0,T] \to L^2(\mathcal{D})$ such that 
$$\| \phi\|_{L^{2}(\Omega \times [0,T]; H^{r})}^2 := \mathbb{E}\left[ \int_0^T \| \phi(t) \|_{H^r}^2 \mathrm dt\right] < \infty \quad \text{and} \quad \| \phi\|_{L^{2}(\Omega,L^{\infty}[0,T; H^{r}])}^2 := \mathbb{E}\left[ \sup_{t \in [0,T]} \| \phi(t) \|_{H^r}^2 \right] < \infty.$$ 
For any predictable process $\Phi(t)$ taking values in the space of Hilbert-Schmidt operators 
$\mathcal{L}_2^r =\mathcal L_2\!\left(\mathcal Q^{1/2}L^2(\mathcal D);H^r(\mathcal D)\right)$, we define the operator norm $\| \Phi(t)\|_{\mathcal{L}_2^r}^2 := \sum_{m=0}^{\infty} \gamma_m \| \Phi e_m\|_r^2$. In this setting we assume $\{ e_i(x)\}_{i=0}^{\infty} \subset H^r(\mathcal{D})$. Finally, $C>0$ denotes a generic constant independent of the spatial and temporal discretization parameters, and may change from line to line. In the two-dimensional analysis, we assume periodic boundary condition and $\mathcal{D} = [0,2\pi]^2$. 

The rest of the paper is organized as follows. Section \ref{sec:2d-scheme-prelim} presents the two-dimensional fully-discrete LDG method coupled with IMEX-Euler time discretization. It also provides several preliminary results for subsequent analysis. 
%lemmas for the numerical analysis in Sections \ref{sec:stability-2d} and \ref{sec:error-estimate-2d}. 
In Section \ref{sec:stability-2d}, the higher-moment unconditional stability estimates are established for the fully-discrete method on Cartesian meshes. The optimal higher-moment error estimates and pathwise error estimates are provided in Section \ref{sec:error-estimate-2d}. Numerical tests are reported in Section \ref{sec:numerical-test} to verify the stability and convergence rates of the proposed method. Some proofs are provided in Appendix \ref{appendixA}.

\end{section}

\begin{section}{IMEX-LDG Scheme and Preliminaries}
\label{sec:2d-scheme-prelim}

In this section, we present the fully-discrete IMEX-LDG formulation and several preliminary results for the subsequent analysis. For clarity of presentation, we use the two-dimensional case as a representative example to illustrate the main ideas of the method and the analysis, and the same framework can be extended to higher spatial dimensions on Cartesian meshes in a straightforward manner. 

We consider the following two-dimensional version of \eqref{spde:conv-diff} on the periodic domain $[0,2\pi]^2$:
% \begin{equation}
% \label{spde:conv-diff-2d}
%     \begin{cases}
%         \displaystyle \mathrm{d}u = {\color{red}\left( \sum_{i=1}^2 \frac{\partial}{\partial x_i} \left(\sum_{j=1}^{2} a_{ij} \frac{\partial u}{\partial x_j} \right) \right)}\mathrm{d}t + \psi(\cdot,u,u_x,u_y) \mathrm{d}t \\
%          \qquad \,\, + \, g(\cdot,u,u_x,u_y) \mathrm{d}W_t, \quad & (\omega,x,y,t)\in \Omega\times [0,2\pi]^2 \times (0,T]; \\
%         u(\omega,x,y,0) = u_0(x,y),  & (\omega,x,y) \in [0,2\pi]^2.
%     \end{cases}
% \end{equation}
% The principal part
% \[
% \sum_{i=1}^2 \frac{\partial}{\partial x_i}\Big(\sum_{j=1}^{2} a_{ij} \frac{\partial u}{\partial x_j}\Big)
% \]
\begin{equation}
\label{spde:conv-diff-2d}
    \begin{cases}
        \displaystyle \mathrm{d}u = \nabla \!\cdot\! \bigl(A(\cdot,u)\nabla u \bigr) \mathrm{d}t + \psi(\cdot,u,u_x,u_y) \mathrm{d}t \\
         \qquad \,\, + \, g(\cdot,u,u_x,u_y) \mathrm{d}W_t, \quad & (\omega,x,y,t)\in \Omega \times [0,2\pi]^2 \times (0,T]; \\
        u(\omega,x,y,0) = u_0(x,y),  & (\omega,x,y) \in \Omega \times [0,2\pi]^2.
    \end{cases}
\end{equation}
The principal part
\[  \nabla \!\cdot\! \bigl(A(\cdot,u)\nabla u \bigr)
\]
is the diffusion operator, whose leading diffusion matrix $A=\{a_{ij}(\omega,x,y,t,u)\}$ is allowed to depend on the solution $u$, so that the equation is quasilinear. The term $\psi(\omega,x,y,t,u,u_x,u_y)$ denotes a general lower-order drift term, which may include a convection term $b_1u_x+b_2u_y$ and reaction/source contributions. In this sense, \eqref{spde:conv-diff-2d} can be viewed as a stochastic convection-diffusion-type equation in a broad setting. The stochastic forcing is multiplicative and is given by the term $g(\omega,x,y,t,u,u_x,u_y)\,\mathrm{d}W_t$.

Throughout this paper, we impose the following assumptions on the data of \eqref{spde:conv-diff-2d}:
\begin{enumerate}[(i)]
    \item (Initial Condition) The initial condition satisfies $u_0 \in H^1([0,2\pi]^2)$.
    \item (Diffusion Coefficient) The leading coefficient matrix $A$ is differentiable of arbitrary order in each variable. In addition, there exist $\alpha, \Lambda>0$, such that for $i,j = 1,2$, 
    \begin{align*}
        \xi^{\top} A(\omega,x,y,t,u)\xi \geq \alpha | \xi |^2, \qquad |a_{ij}(\omega,x,y,t,u)|^2 \leq \Lambda, 
        % \big(A(\omega,x,y,t,u)\xi, \xi\big):=
        % \xi^{\top} A(\omega,x,y,t,u)\xi \geq \alpha \| \xi\|^2, \qquad |a_{ij}(\omega,x,y,t,u)|^2 \leq \Lambda, 
    \end{align*}
    for every $\xi \in \mathbb{R}^2$ and every $(\omega,x,y,t,u) \in \Omega \times [0, 2\pi]^2 \times [0,T] \times \mathbb{R}$.
    
    \item (Lower-order drift term) The function $\psi$ satisfies
    \begin{align*}
        &|\psi(\omega,x,y,t,u,v_1,v_2)|^2 \leq B_2^2(1+|u|^2) + B_3^2 (|v_1|^2+|v_2|^2), \\
        &|\psi(\omega,x,y,t,u,v_1,v_2)-\psi(\omega,x,y,t,u',v_1',v_2')| \leq B_1(|u-u'| + |v_1-v_1'| + |v_2-v_2'|), \\
        &|\psi(\omega,x,y,t,u,v_1,v_2)-\psi(\omega,x,y,t',u,v_1,v_2)| \leq B_4|t-t'|^{\frac{1}{2}}(1 + |u| + |v_1| + |v_2|), 
    \end{align*}
    for some nonnegative constants $B_1,B_2,B_3,B_4$, and for all $(\omega,x,y,t,t',u,u',v_1,v_1',v_2,v_2') \in \Omega \times [0, 2\pi]^2 \times [0,T]^2 \times \mathbb{R}^6$.
    
    \item (Noise Coefficient) There exist nonnegative constants $D_1,\ldots,D_5$ such that
    \begin{align*}
        &|g(\omega,x,y,t,u,v_1,v_2)|^2 \leq D_3^2(1+|u|^2) + D_4^2(|v_1|^2+|v_2|^2), \\
        &|g(\omega,x,y,t,u,v_1,v_2)-g(\omega,x,y,t,u',v_1',v_2')| \leq D_1|u-u'| + D_2(|v_1-v_1'|+|v_2-v_2'|), \\
        &|g(\omega,x,y,t,u,v_1,v_2)-g(\omega,x,y,t',u,v_1,v_2)| \leq D_5|t-t'|^{\frac{1}{2}}(1 + |u| + |v_1|+|v_2|),
    \end{align*}
    for all $(\omega,x,y,t,t',u,u',v_1,v_1',v_2,v_2') \in \Omega \times [0, 2\pi]^2 \times [0,T]^2 \times \mathbb{R}^6$.
\end{enumerate}
Assumptions {\rm(iii)} and {\rm(iv)} are standard Lipschitz continuity and linear growth conditions imposed on the lower-order drift and stochastic diffusion terms. Similar hypotheses were adopted in \cite{LiShuTang2021_ESAIM} to guarantee the existence and uniqueness of the semi-discrete DG solution. In this work, these assumptions will be used repeatedly in the stability and error analysis of the fully-discrete scheme.

\begin{subsection}{Fully-Discrete Numerical Scheme}

Let $ \mathcal{T}_h = \left\{ I_i \times J_j = [x_{i-1/2}, x_{i+1/2}] \times [y_{j-1/2}, y_{j+1/2}],\, i = 1,2,\ldots,N_x;\, \, j = 1,2,\ldots,N_y \right\}$ be a quasi-uniform rectangular partition of the domain $\mathcal{D} = [0, 2\pi]^2$. We denote the cell centers by $x_i = (x_{i-1/2}+ x_{i+1/2})/2$ and $y_j = (y_{j-1/2} + y_{j+1/2})/2$. The mesh sizes in $x,y$ directions are given as $h_{x,i} = x_{i+1/2}- x_{i-1/2}$, $h_{y,j} = y_{j+1/2} - y_{j-1/2}$ with $h_x = \max_i h_{x,i}$, $h_y = \max_j h_{y,j}$, and $h = \max(h_x,h_y)$ being the maximum mesh size. The ratio $\min(h_{x,i},h_{y,j})/h$ is assumed to be bounded below by a positive constant for all $i,j$ as $h \to 0$. 

For a nonnegative integer $r$, let $H^r(\mathcal{T}_h)$ denote the broken Sobolev space associated with the partition $\mathcal{T}_h$. We denote by $P^r(I_i)$ the space of one-dimensional polynomials of degree at most $r$ on the interval $I_i$, and by 
$Q^r(I_i\times J_j) := P^r(I_i)\otimes P^r(J_j)$
the tensor-product polynomial space of degree at most $r$ in each variable on the cell $I_i\times J_j$. The corresponding discontinuous finite element space is then defined by
% , and the two-dimensional tensor product discontinuous piecewise polynomial space of degree at most $r$ in each variable is defined as 
\[ \mathbb{V}_h := \{ v(x,y): v|_{I_i \times J_j} \in Q^r(I_i \times J_j), \quad i = 1,2,\ldots,N_x;\, j = 1,2,\ldots,N_y\}.  \]
For any $v\in\mathbb{V}_h $, we denote by $v_{i+1/2,y}^\pm:=v(x_{i+1/2}^\pm,y)$ and $v_{x,j+1/2}^\pm:=v(x,y_{j+1/2}^\pm)$ the traces of $v$ on the cell interfaces in the $x$- and $y$-directions, respectively.
The jumps at the cell interface along $x$ or $y$ directions are denoted as $[v]_{i+1/2, y} = v_{i+1/2,y}^+ - v_{i+1/2,y}^-, \, y \in J_j$ and $[v]_{x,j+1/2} = v_{x,j+1/2}^+ - v_{x,j+1/2}^-, \, x \in I_i$, respectively. The semi-norm of global jumps is defined as $\| [v]\|_{\Gamma_h}^2 := \sum_{i,j} \left(\int_{J_j} [v]_{i-1/2,y}^2 \, \mathrm{d}y + \int_{I_i} [v]_{x,j-1/2}^2\, \mathrm{d}x \right) $. For the temporal discretization, let $\{ t^n\}_{n=0}^{N_T}$ be a partition of $[0,T]$ and set the time step $k_n = t^{n}-t^{n-1}$ for $n=1,\ldots,N_T$. For simplicity of presentation, we assume that $k_n \equiv k = T/N_T$, and all the results below remain valid for variable time steps. 

To present the LDG method for approximating the SPDE \eqref{spde:conv-diff-2d}, we first rewrite the model into the following first order system by introducing auxiliary variables:
% by setting $w_1 = a_{11}u_x + a_{12}u_y$, $w_2 = a_{21}u_x + a_{22}u_y$.
\begin{gather}\label{eq:auxiliary}
    \begin{cases}
        \mathrm{d}u = \left(w_{1,x} + w_{2,y}  \right)\mathrm{d}t + \psi(\cdot,x,y,t,u,u_x,u_y) \mathrm{d}t + g(\cdot,x,y,t,u,u_x,u_y) \mathrm{d}W_t, \\
        v_1(x,y,t) = u_x(x,y,t),  \\
        v_2(x,y,t) = u_y(x,y,t),  \\
        w_1(x,y,t) = a_{11}(x,y,t,u) v_1(x,y,t) + a_{12}(x,y,t,u) v_2(x,y,t), \\
        w_2(x,y,t) = a_{21}(x,y,t,u) v_1(x,y,t) + a_{22}(x,y,t,u) v_2(x,y,t).
    \end{cases}
\end{gather}
For notational simplicity, we suppress the explicit dependence of random coefficients and stochastic processes on $\omega$. 
%We will omit the dependence of stochastic processes on $\omega$ as they are not defined in a pathwise sense. 
Utilizing the LDG spatial discretization and IMEX-Euler temporal discretization, the fully-discrete scheme for the above system is given as: for any $\omega \in \Omega$, given $u_h^n(\omega,x,y) \in \mathbb{V}_h$, find $(u_h^{n+1}, v_{1,h}^{n+1}, v_{2,h}^{n+1}, w_{1,h}^{n+1},w_{2,h}^{n+1})(\omega,x,y) \in (\mathbb{V}_h)^5$, such that for any test functions $(r_h,z_h, p_h,q_h,\phi_h) \in (\mathbb{V}_h)^5$, we have
\begin{align}
    (u_h^{n+1} - u_h^n,r_h)_{ij} &= kH_{i,j}^+(w_{1,h}^{n+1},w_{2,h}^{n+1},r_h) + k(\psi(x,y,t^n,u_h^n,v_{1,h}^n,v_{2,h}^n),r_h)_{ij} \notag \\
    &\quad + (g(x,y,t^n,u_h^n,v_{1,h}^n,v_{2,h}^n) \Delta W_n, r_h)_{ij} \,,  % \YX{\text{add $\omega$ in } \psi \text{ and } g?} 
    \label{eq:fully-discrete-1-2d} \\
    (v_{1,h}^{n+1},p_h)_{ij} &= L_{i,j}^{x-}(u_h^{n+1},p_h), \quad (v_{2,h}^{n+1},q_h)_{ij} = L_{i,j}^{y-}(u_h^{n+1},q_h),  \label{eq:fully-discrete-2-2d} \\
    (w_{1,h}^{n+1},z_h)_{ij} &= (a_{11}(x,y,t^{n+1},u_h^{n+1})v_{1,h}^{n+1}, z_h)_{ij} + (a_{12}(x,y,t^{n+1},u_h^{n+1})v_{2,h}^{n+1}, z_h)_{ij}\,,  \label{eq:fully-discrete-3-2d} \\
    (w_{2,h}^{n+1},\phi_h)_{ij} &= (a_{21}(x,y,t^{n+1},u_h^{n+1})v_{1,h}^{n+1}, \phi_h)_{ij} + (a_{22}(x,y,t^{n+1},u_h^{n+1})v_{2,h}^{n+1}, \phi_h)_{ij}\,.  \label{eq:fully-discrete-4-2d}
\end{align}
where we define $(\cdot, \cdot)_{ij}$ as the $L^2$ inner product on $I_i \times J_j$, \(\Delta W_n:=W_{t^{n+1}}-W_{t^n}\), and introduce the notations 
\begin{align*}
    H_{i,j}^+(w_{1,h},w_{2,h},r_h) &= -\iint_{I_i \times J_j} w_{1,h}(r_h)_x + w_{2,h}(r_h)_y \, \mathrm{d}x\mathrm{d}y + \int_{J_j} ({w}_{1,h}^+ r_h^-)_{i+\frac{1}{2},y} - ({w}_{1,h}^+ r_h^+)_{i-\frac{1}{2},y} \, \mathrm{d}y \\
    &\quad + \int_{I_i} ({w}_{2,h}^+ r_h^-)_{x,j+\frac{1}{2}} - ({w}_{2,h}^+ r_h^+)_{x,j-\frac{1}{2}} \, \mathrm{d}x, \\
    % L_{i,j}^-(u_h,p_h,q_h) &= -(u_h,(p_h)_x) -(u_h,(q_h)_y) + \int_{J_j} (\hat{u}_{h} p_h^-)_{i+\frac{1}{2},y} - (\hat{u}_{h} p_h^+)_{i-\frac{1}{2},y} \, dy \\
    % &+ \int_{I_i} (\hat{u}_{h} q_h^-)_{x,j+\frac{1}{2}} - (\hat{u}_{h} q_h^+)_{x,j-\frac{1}{2}} \, dx, \\
    L_{i,j}^{x-}(u_h,p_h) &= -\iint_{I_i \times J_j} u_h(p_h)_x \, \mathrm{d}x\mathrm{d}y + \int_{J_j} ({u}_{h}^- p_h^-)_{i+\frac{1}{2},y} - ({u}_{h}^- p_h^+)_{i-\frac{1}{2},y} \, \mathrm{d}y, \\
    L_{i,j}^{y-}(u_h,q_h) &= -\iint_{I_i \times J_j} u_h(q_h)_y \, \mathrm{d}x\mathrm{d}y + \int_{I_i} ({u}_{h}^- q_h^-)_{x,j+\frac{1}{2}} - ({u}_{h}^- q_h^+)_{x,j-\frac{1}{2}} \, \mathrm{d}x.
\end{align*}
For the leading diffusion term, we adopt the alternating fluxes $({w}_{1,h}^+,{u}_{h}^-)$ and $({w}_{2,h}^+,{u}_{h}^-)$, as embedded in the pair of $H_{i,j}^+(w_{1,h},w_{2,h},r_h)$ in \eqref{eq:fully-discrete-1-2d} and $L_{i,j}^{x-}(u_h^{n+1},p_h), L_{i,j}^{y-}(u_h^{n+1},q_h)$ in \eqref{eq:fully-discrete-2-2d}. Other choices of numerical fluxes, including the generalized numerical fluxes studied in \cite{SunShuXing_JCP2022, SunXing_Mathcomp2021,CX2024}, can also be considered.

The initial condition $u_h^0$ can be taken as an approximation of the initial value $u_0$ with sufficient accuracy, for instance, the local Gauss-Radau projections of $u_0$ defined in Section \ref{subsec:prelim-2d}. The global form of the fully-discrete scheme can be obtained by summing over all $i$ and $j$ in \eqref{eq:fully-discrete-1-2d}-\eqref{eq:fully-discrete-4-2d}. For ease of notation, we set $L_{i,j}^-(u_h,p_h,q_h) = L_{i,j}^{x-}(u_h,p_h) + L_{i,j}^{y-}(u_h,q_h)$, $H^+ = \sum_{i,j} H_{i,j}^+$, $L^- = \sum_{i,j} L_{i,j}^-$. 

Lastly, let $\mathbf{X} := \{ X_n(\omega)\}_{n=0}^{N_T}$ be an $H^1(\mathcal{T}_h)$-valued discrete stochastic process. In particular, we write $\mathbf{u_h}=\{ u_h^n\}_{n=0}^{N_T}$, $\mathbf{v_{1,h}}=\{ v_{1,h}^n\}_{n=0}^{N_T}$, and $\mathbf{v_{2,h}}=\{ v_{2,h}^n\}_{n=0}^{N_T}$ for the corresponding sequences of numerical solutions. We define 
\[\langle \mathbf{X} \rangle := \sum_{\ell=0}^{N_T-1} \| X_{\ell+1} - X_{\ell}\|^2\]
to be the discrete temporal variation of $\mathbf{X}$. For each $\omega \in \Omega$, we introduce the discrete norms
\begin{equation}
\label{def:func-norm}
    % \| \{ X_{\ell}\}_{0}^{n} \|_{\mathcal{S}(0,n;L^2)} := \max_{0\leq \ell \leq n} \| X_{\ell}\|, \quad \| \{ X_{\ell}\}_{0}^{n} \|_{\ell^{2}(0,n;L^2)} := \left (k\sum_{\ell=0}^{n} \| X_{\ell}\|^2 \right)^{\frac{1}{2}}. \\
    \| \mathbf{X} \|_{\mathcal{S}} = \| \mathbf{X} \|_{\mathcal{S}(0,N_T;L^2)} := \max_{0\leq \ell \leq N_T} \| X_{\ell}\|, \qquad 
    \| \mathbf{X} \|_{\ell^2} = \| \mathbf{X} \|_{\ell^{2}(0,N_T;L^2)} := \left (k\sum_{\ell=0}^{N_T} \| X_{\ell}\|^2 \right)^{\frac{1}{2}}.
    % \| \mathbf{X} \|_{\mathcal{S}} = \| \{ X_{\ell}\}_{0}^{N_T} \|_{\mathcal{S}(0,N_t;L^2)} := \max_{0\leq \ell \leq N_T} \| X_{\ell}\|, \quad \| \mathbf{X} \|_{\ell^2} = \| \{ X_{\ell}\}_{0}^{N_T} \|_{\ell^{2}(0,N_T;L^2)} := \left (k\sum_{\ell=0}^{N_T} \| X_{\ell}\|^2 \right)^{\frac{1}{2}}.
\end{equation}
To simplify the notation, we will omit the dependence on $N_T$ and the underlying $L^2$-norm from the notations above whenever it is clear from the context. 

\end{subsection}

\begin{subsection}{Preliminaries}
\label{subsec:prelim-2d}    
In this subsection, we present a few lemmas needed for the stability and error analysis of the fully discrete scheme \eqref{eq:fully-discrete-1-2d}-\eqref{eq:fully-discrete-4-2d}, which will be studied in the following sections. 

\begin{lemma}
\label{lem:convex-ineq}
Let $a,b,c$ be three nonnegative real numbers. Given $q > 1$, for any $\epsilon>0$, it holds that
\begin{align}
    &(a+b)^q \leq C^\#(\epsilon)a^q + (1+\epsilon)b^q,  \label{ineq:convex-1} \\
    % & \left|a^q - b^q \right| \leq \frac{q}{2}|a-b|(a^{q-1} + b^{q-1}), \label{ineq:convex-2} \\
    &(a+b+c)^q \leq C^*(\epsilon)a^q + (2^{q-1}+\epsilon)(b^q + c^q),   \label{ineq:convex-3}
\end{align}
where $C^\#(\epsilon) = \left(1-(1+\epsilon)^{\frac{1}{1-q}} \right)^{1-q}$ and $C^*(\epsilon) = \left(1-2 \left(2^{q-1}+\epsilon \right)^{\frac{1}{1-q}} \right)^{1-q}$. 
\end{lemma}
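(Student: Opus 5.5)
The plan is to derive both inequalities from the convexity of $t\mapsto t^q$ on $[0,\infty)$ for $q>1$, in the form of the weighted Jensen inequality
\[
(\lambda x+(1-\lambda)y)^q \le \lambda x^q+(1-\lambda)y^q, \qquad \lambda\in(0,1),\ x,y\ge 0 .
\]
Equivalently, by Hölder's inequality with conjugate exponents $q$ and $q'=q/(q-1)$, for weights $\theta_1,\theta_2>0$ with $\theta_1+\theta_2=1$ we have
\[
(a+b)^q=\Big(\theta_1\frac{a}{\theta_1}+\theta_2\frac{b}{\theta_2}\Big)^q\le \theta_1^{1-q}a^q+\theta_2^{1-q}b^q .
\]

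For \eqref{ineq:convex-1}, choose $\theta_2$ so that $\theta_2^{1-q}=1+\epsilon$, i.e. $\theta_2=(1+\epsilon)^{\frac{1}{1-q}}$. Since $1-q<0$ and $1+\epsilon>1$, this gives $\theta_2\in(0,1)$. Then $\theta_1=1-(1+\epsilon)^{\frac1{1-q}}>0$, and $\theta_1^{1-q}$ is exactly $C^\#(\epsilon)$.

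For \eqref{ineq:convex-3}, apply the same three-term weighted convexity:
\[
(a+b+c)^q\le \theta_1^{1-q}a^q+\theta_2^{1-q}b^q+\theta_3^{1-q}c^q, \qquad \theta_1+\theta_2+\theta_3=1 .
\]
Take $\theta_2=\theta_3=(2^{q-1}+\epsilon)^{\frac1{1-q}}$, so that $\theta_2^{1-q}=\theta_3^{1-q}=2^{q-1}+\epsilon$, and set $\theta_1=1-2(2^{q-1}+\epsilon)^{\frac1{1-q}}$. Then $\theta_1^{1-q}=C^*(\epsilon)$.

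The only point needing care, which I expect to be the main obstacle, is positivity of $\theta_1$. Since $2^{q-1}+\epsilon>2^{q-1}$ and $1-q<0$,
\[
(2^{q-1}+\epsilon)^{\frac1{1-q}}<(2^{q-1})^{\frac1{1-q}}=2^{-1},
\]
so $2\theta_2<1$ and $\theta_1>0$. The threshold $2^{q-1}$ is precisely the sharp constant in $(b+c)^q\le 2^{q-1}(b^q+c^q)$, which is why the $\epsilon$ is needed.

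The degenerate cases where some of $a,b,c$ vanish are covered automatically, since the weighted inequality holds for all nonnegative arguments.
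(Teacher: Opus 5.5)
Your proposal is correct and follows the route the paper indicates, namely weighted Jensen for $t\mapsto t^q$ with weights chosen so that the coefficients of $b^q$ (and $c^q$) equal the prescribed constants, and with $\theta_1>0$ guaranteed by $(2^{q-1}+\epsilon)^{\frac{1}{1-q}}<\tfrac12$. Applying three-point Jensen directly for the second inequality, rather than two-point Jensen followed by $(b+c)^q\le 2^{q-1}(b^q+c^q)$, yields exactly the same constant $C^*(\epsilon)$, because $2(2^{q-1}+\epsilon)^{\frac{1}{1-q}}=(1+2^{1-q}\epsilon)^{\frac{1}{1-q}}$.
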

This can be derived using the convexity of $f(x)=x^q$ for $q> 1$ and a suitable weighted application of Jensen's inequality, so the proof is omitted.
% weighted convexity inequality. A short proof is provided in Appendix \ref{appendix-convex-ineq}. 

% The standard inverse property for the finite element space $\mathbb{V}_h$ is given in the following lemma.
% \begin{lemma}[Inverse Inequality]
% \label{lem:inverse-ineq-2d}
% Let $K = I_i \times J_j$. For any $v \in \mathbb{V}_h$, there exists $\mu >0$ independent of $v,h,i,j$ such that 
% \begin{align}
% \label{ineq:inverse-2d}
% h \| \nabla v\|_K + h \| v\|_{\infty}+ h^{\frac{1}{2}}\| v\|_{\partial K} \leq \mu \| v\|_K,
% \end{align}
% where $\| v\|_{\partial K}^2 = \| v(\cdot,y_{j-\frac{1}{2}}^+)\|_{I_i}^2 + \| v(\cdot,y_{j+\frac{1}{2}}^-)\|_{I_i}^2 + \| v(x_{i-\frac{1}{2}}^+, \cdot)\|_{J_j}^2 + \| v(x_{i+\frac{1}{2}}^-, \cdot)\|_{J_j}^2$ is the $L^2$ norm on the boundary. 
% \end{lemma}

The standard $L^2$ projection onto the piecewise polynomial space is denoted by $\mathcal{P}: L^2(\mathcal{T}_h) \to \mathbb{V}_h$. In addition, we define two Gauss-Radau projections $\mathcal{P}^{\pm}: L^2(\mathcal{T}_h) \to \mathbb{V}_h$ as the tensor product of the one-dimensional Gauss-Radau projections $\mathcal{P}^{\pm} = \mathcal{P}_x^{\pm} \otimes \mathcal{P}_y^{\pm}$,
% \begin{align*}
%     \mathcal{P} = \mathcal{P}_x \otimes \mathcal{P}_y\,, \quad \mathcal{P}^{\pm} = \mathcal{P}_x^{\pm} \otimes \mathcal{P}_y^{\pm},
% \end{align*}
where the subscripts indicate that the one-dimensional projections are applied with respect to the corresponding variables. The one-dimensional projections $\mathcal{P}_x^{\pm}$ are defined by (similarly for $\mathcal{P}_y^{\pm}$)
\begin{align}  
    &\int_{I_i} (\mathcal{P}_x^{-}u - u) v \, \mathrm{d}x = 0, \quad \forall v \in P^{r-1}(I_i), \quad \text{and} \quad (\mathcal{P}_x^{-}u)_{i+\frac{1}{2}}^- = u_{i+\frac{1}{2}}^- \,, \label{def:gauss-radau-minus-1d} \\
    &\int_{I_i} (\mathcal{P}_x^{+}u - u) v \, \mathrm{d}x = 0, \quad \forall v \in P^{r-1}(I_i), \quad \text{and} \quad (\mathcal{P}_x^{+}u)_{i-\frac{1}{2}}^+ = u_{i-\frac{1}{2}}^+, \label{def:gauss-radau-plus-1d} 
\end{align}
for $i=1,2,\ldots,N_x$. For clarity, we give the explicit definition of $\mathcal{P}^{-}$, which is also presented in \cite{MengShuWu_MathComp2015}: for any $v \in Q^{r-1}(I_i \times J_j)$, we have
\begin{align*}
    &\iint_{I_i \times J_j} (\mathcal{P}^{-}u - u)v  \, \mathrm{d}y \, \mathrm{d}x = 0, \quad \mathcal{P}^{-}u(x_{i+\frac{1}{2}}^-,y_{j+\frac{1}{2}}^-) = u(x_{i+\frac{1}{2}},y_{j+\frac{1}{2}}), \\
    &\int_{J_j} \left(\mathcal{P}^{-}u(x_{i+\frac{1}{2}}^-,y) - u(x_{i+\frac{1}{2}},y) \right) v(x_{i+\frac{1}{2}}^-,y)\, \mathrm{d}y = 0, \\
    &\int_{I_i} \left(\mathcal{P}^{-}u(x,y_{j+\frac{1}{2}}^-) - u(x,y_{j+\frac{1}{2}}) \right) v(x,y_{j+\frac{1}{2}}^-)\, \mathrm{d}x = 0.  
\end{align*}

% Let $u$ be the exact solution, and $u_h$ be the numerical solution. We denote the error of the numerical scheme by $e_u = u - u_h$, $e_{v_i} = v_i - v_{i,h}$ and $e_{w_i} = w_i - w_{i,h}$, for $i=1,2$. 
%As in \eqref{def:func-norm}, we similarly define $\mathbf{e_{u}}=\{ e_{u}^n\}_{n=0}^{N_T}$, $\mathbf{e_{v_1}}=\{ e_{v_1}^n\}_{n=0}^{N_T}$ and $\mathbf{e_{v_2}}=\{ e_{v_2}^n\}_{n=0}^{N_T}$ to be the vector of numerical error at each time step. 
For the projection error, we have the following approximation result \cite{Cockburn_SIAMNu2001}. 
\begin{lemma}
\label{lem:proj-property-2d}
Let $K = I_i \times J_j$, $\Pi = \mathcal{P}, \mathcal{P}^{\pm}$. For any $u \in H^{r+1}(K)$ with $\eta = \Pi u - u$, it holds that
\begin{align}
\label{ineq:proj-property-1-2d}
\| \eta\|_{L^2(K)} + h^{\frac{1}{2}}\| \eta\|_{L^2( \partial K)} + h\| \eta\|_{H^1(K)} \leq Ch^{r+1}\|u\|_{H^{r+1}(K)}.
\end{align}
Moreover, we have $\| \eta\|_{L^{\infty}(K)} \leq Ch^{r}\|u\|_{H^{r+1}(K)}$ when $r \geq 1$. 
% In addition, for $u \in W^{r+1,\infty}(K)$, we have $\| \eta\|_{L^{\infty}(\partial K)} \leq Ch^{r+1}\|u\|_{W^{r+1,\infty}(K)}$. 
\end{lemma}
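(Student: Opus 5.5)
The argument is the standard Bramble--Hilbert/scaling route, applied to each projection $\Pi\in\{\mathcal P,\mathcal P^{\pm}\}$ on the fixed reference square $\hat K=[-1,1]^2$ and then mapped back to $K=I_i\times J_j$ through the affine map $F_K$. Quasi-uniformity and the lower bound on $\min(h_{x,i},h_{y,j})/h$ make both side lengths comparable to $h$, so the Jacobian and the derivative scalings are $\sim h$ with constants independent of $i,j$.

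\textbf{Step 1: well-posedness, polynomial invariance and boundedness on $\hat K$.} For $\mathcal P$ this is immediate. For the one-dimensional Gauss--Radau projections $\hat{\mathcal P}^{\pm}$ on $[-1,1]$, the $r+1$ conditions (moments against $P^{r-1}$ plus one endpoint value) determine a unique element of $P^r$. Indeed, if $p\in P^r$ has zero data, then $p$ is a multiple of the Legendre polynomial $L_r$, and $L_r(\pm1)\neq0$ forces $p=0$. Hence $\hat{\mathcal P}^{\pm}$ is well defined, reproduces $P^r$, and satisfies
\[
\|\hat{\mathcal P}^{\pm}\hat u\|_{L^\infty}\le C\bigl(\|\hat u\|_{L^2}+|\hat u(\pm1)|\bigr)\le C\|\hat u\|_{H^1(-1,1)}.
\]
The tensor product $\hat{\mathcal P}^{\pm}=\hat{\mathcal P}^{\pm}_x\otimes\hat{\mathcal P}^{\pm}_y$ reproduces $Q^r(\hat K)$. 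Since $r+1\ge2>d/2$ with $d=2$, I will show it is bounded from $H^{r+1}(\hat K)$, indeed already from $H^2(\hat K)$ via the mixed-derivative structure, into $L^\infty(\hat K)$. The cleanest route is to use the explicit characterization quoted from \cite{MengShuWu_MathComp2015}: the degrees of freedom are cell moments against $Q^{r-1}$, edge moments against $P^{r-1}$ on two edges, and one vertex value. Each of these is a bounded functional on $H^2(\hat K)$ by the trace theorem and the Sobolev embedding $H^2(\hat K)\hookrightarrow C(\hat K)$.

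\textbf{Step 2: Bramble--Hilbert on $\hat K$.} Since $\hat\Pi$ fixes $Q^r\supset P^r$, for every $p\in P^r$ we have $\hat u-\hat\Pi\hat u=(I-\hat\Pi)(\hat u-p)$. Combining this with the boundedness of Step 1 and the trace inequality gives
\[
\|\hat\eta\|_{L^2(\hat K)}+\|\hat\eta\|_{L^2(\partial\hat K)}+\|\hat\eta\|_{H^1(\hat K)}+\|\hat\eta\|_{L^\infty(\hat K)}\le C\inf_{p\in P^r}\|\hat u-p\|_{H^{r+1}(\hat K)}\le C|\hat u|_{H^{r+1}(\hat K)},
\]
where the last inequality is the Bramble--Hilbert lemma.

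\textbf{Step 3: scaling back to $K$.} The projections commute with $F_K$ because their defining conditions are affine invariant. Changing variables then gives
\[
\|\eta\|_{L^2(K)}\sim h\|\hat\eta\|,\qquad \|\eta\|_{L^2(\partial K)}\sim h^{1/2}\|\hat\eta\|_{\partial},\qquad |\eta|_{H^1(K)}\sim|\hat\eta|_{H^1},\qquad |\hat u|_{H^{r+1}(\hat K)}\le Ch^{r+1-1}|u|_{H^{r+1}(K)}=Ch^{r}|u|_{H^{r+1}(K)}.
\]
Multiplying out, $\|\eta\|_{L^2(K)}\le Ch^{r+1}$, $h^{1/2}\|\eta\|_{L^2(\partial K)}\le Ch^{r+1}$ and $h|\eta|_{H^1(K)}\le Ch^{r+1}$, all times $\|u\|_{H^{r+1}(K)}$, which is \eqref{ineq:proj-property-1-2d}. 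For the $L^\infty$ bound, $\|\eta\|_{L^\infty(K)}=\|\hat\eta\|_{L^\infty(\hat K)}\le Ch^{r}|u|_{H^{r+1}(K)}$. The assumption $r\ge1$ is what guarantees $H^{r+1}\hookrightarrow C$ in two dimensions, so the point values entering $\mathcal P^{\pm}$ and the sup norm are both controlled.

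\textbf{Main obstacle.} The delicate step is the boundedness of the two-dimensional $\mathcal P^{\pm}$ on $H^{r+1}(\hat K)$, including the case $r=0$ where the pure vertex value is not controlled by $H^1$. For $r\ge1$ the Sobolev embedding handles it. For $r=0$, in the $L^2$ and $H^1$ part of the estimate, I would fall back on the standard treatment in \cite{Cockburn_SIAMNu2001}. That treatment uses the tensor structure: apply the one-dimensional estimate in $x$ with $y$ frozen, then in $y$, and bound the mixed term $(I-\mathcal P_x^{\pm})(I-\mathcal P_y^{\pm})u$ through $\partial_x\partial_y u$. Only the anisotropic regularity $\partial_x^{a}\partial_y^{b}u$ with $a,b\le r+1$ is needed there, and this is controlled by $\|u\|_{H^{r+1}}$ after redistributing derivatives.
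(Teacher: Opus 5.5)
\textbf{What is correct.} For $r\ge 1$, your Steps 1--3 give a complete proof for all three projections. They also give the first estimate for $\Pi=\mathcal P$ when $r=0$. This is the standard Bramble--Hilbert/scaling argument; the paper itself gives no proof and just cites \cite{Cockburn_SIAMNu2001} for this result. The individual ingredients are all correct:
\begin{itemize}
\item unisolvence via $L_r(\pm1)\neq0$;
\item boundedness of the Meng--Shu--Wu degrees of freedom on $H^2(\hat K)$;
\item invariance of these conditions under the axis-aligned map $F_K$;
\item the two-dimensional scalings, e.g.\ $|\hat u|_{H^{r+1}(\hat K)}\le Ch^{r}|u|_{H^{r+1}(K)}$ and $\|\eta\|_{L^\infty(K)}=\|\hat\eta\|_{L^\infty(\hat K)}$.
\end{itemize}

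\textbf{The gap: your last paragraph, $r=0$ with $\Pi=\mathcal P^{\pm}$.} The tensor-product fallback does not work there.

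The one-dimensional Gauss--Radau projection contains a point evaluation, so it is not $L^2$-stable; its error can only be bounded by $h^a\|\partial^a\cdot\|$ with $a\ge1$. Split $I-\mathcal P_x^{\pm}\mathcal P_y^{\pm}=(I-\mathcal P_x^{\pm})+(I-\mathcal P_y^{\pm})-(I-\mathcal P_x^{\pm})(I-\mathcal P_y^{\pm})$. The mixed term then costs $h^{a+b}\|\partial_x^a\partial_y^b u\|_{L^2(K)}$ with $a,b\ge1$, i.e.\ a derivative of total order at least $2$.
\begin{itemize}
\item For $r\ge1$ you can take $a+b=r+1$, so this is a valid alternative proof there.
\item For $r=0$ it requires $\partial_x\partial_y u$, which $\|u\|_{H^1(K)}$ does not control.
\end{itemize}
Your claim that $\partial_x^a\partial_y^b u$ with $a,b\le r+1$ is controlled by $\|u\|_{H^{r+1}}$ is false, since such derivatives have order up to $2r+2$.

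No argument can rescue this case, because the statement itself fails. For $r=0$, $\mathcal P^{\pm}u$ is the constant $u(v)$, where $v$ is the relevant vertex of $K$. Point evaluation is not bounded on $H^1(K)$ in two dimensions. Concretely, take the continuous functions $u_M=\min\{\log\log(e/|x-v|),M\}$. Their $H^1(K)$ norms are bounded uniformly in $M$, but $u_M(v)=M$. Hence $\|\eta\|_{L^2(K)}\ge M|K|^{1/2}-\|u_M\|_{L^2(K)}\to\infty$, while $\|u_M\|_{H^1(K)}$ stays bounded.

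\textbf{The correct conclusion.} The $\mathcal P^{\pm}$ part of the lemma must be restricted to $r\ge1$. If $r=0$ is wanted, you must assume extra regularity such as $u\in H^2(K)$. Your Steps 2--3, run with $H^2$ in place of $H^{r+1}$, then give only $\|\eta\|_{L^2(K)}\le Ch\bigl(|u|_{H^1(K)}+h|u|_{H^2(K)}\bigr)$, not the stated bound.
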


In spatial dimensions greater than one, the following two terms in \eqref{ineq:superconvergence-1}, \eqref{ineq:superconvergence-2} associated with the projection error do not vanish in general. To control them, we make use of the super-convergence property summarized below, and refer to \cite{ChengMengZhang_MathComp2016,MengShuWu_MathComp2015} for details. 
\begin{lemma}
\label{lem:superconvergence}
For any $(w,w_1,w_2) \in \mathbb{V}_h^3$ and any $u,v \in H^{r+2}(\mathcal{D})$, the following estimates hold
\begin{align}
    &|L^-(\mathcal{P}^-u - u, w_1,w_2)| \leq Ch^{r+1} \| u\|_{{r+2}}(\| w_1\|+\| w_2\|), \label{ineq:superconvergence-1} \\
    &|H^+(\mathcal{P}^+ u - u, \mathcal{P}^+ v - v, w)| \leq Ch^{r+1} (\| u\|_{{r+2}} + \| v\|_{{r+2}}) \| w\|.  \label{ineq:superconvergence-2}
\end{align}
Moreover, if $u|_K, \,v|_K \in P^{r+1}$ for every $K \in \mathcal{T}_h$, then
\[ L^-(\mathcal{P}^-u - u, w_1,w_2) = H^+(\mathcal{P}^+ u - u, \mathcal{P}^+ v - v, w) = 0.\]
\end{lemma}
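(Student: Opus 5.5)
The plan is to reduce each global functional to cellwise one-dimensional identities for the Gauss-Radau projections, and then to absorb whatever does not cancel by a Bramble-Hilbert argument. The last statement of the lemma is proved first, since it drives the estimates.

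Write $\eta = \mathcal{P}^-u - u$. Summing $L^{x-}_{i,j}$ over $i,j$ and integrating by parts on each cell gives
\[
L^{x-}(\eta, w_1) = -\sum_{i,j}\iint_{I_i\times J_j}\eta\,(w_1)_x\,\mathrm{d}x\mathrm{d}y - \sum_{i,j}\int_{J_j}\eta^-_{i+\frac12,y}\,[w_1]_{i+\frac12,y}\,\mathrm{d}y .
\]
Here $(w_1)_x\in P^{r-1}(I_i)\otimes P^{r}(J_j)$ and $w_1^\pm$ restricted to a vertical edge lies in $P^r(J_j)$. The obstruction is the part of $\mathcal{P}^- = \mathcal{P}_x^-\otimes\mathcal{P}_y^-$ acting in $y$.

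To handle it, split
\[
\eta = (\mathcal{P}_x^- u - u) + \mathcal{P}_x^-(\mathcal{P}_y^- u - u).
\]
For the first piece, the one-dimensional properties \eqref{def:gauss-radau-minus-1d} in $x$ make both the volume term and the edge term vanish exactly, because $(w_1)_x$ has degree at most $r-1$ in $x$ and the trace at $x_{i+1/2}^-$ is interpolated. The second piece, $\mathcal{P}_x^-(\mathcal{P}_y^-u-u)$, does not cancel in general. If $u|_K\in P^{r+1}$, the only monomials of $u$ for which $\mathcal{P}_y^-u-u\neq 0$ have $y$-degree $r+1$, and therefore $x$-degree $0$. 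For these, $\mathcal{P}_x^-$ acts as the identity, and $\mathcal{P}_y^-u - u$ is a multiple of the one-dimensional Radau polynomial in $y$ on $J_j$. This polynomial is orthogonal to $P^{r-1}(J_j)$ and vanishes at $y_{j+1/2}^-$. The volume and edge contributions then have to be shown to cancel, or to be orthogonal to the relevant test space, by combining this with the $y$-structure of the LDG flux. This is the delicate point, and I would follow the explicit computations in \cite{MengShuWu_MathComp2015,ChengMengZhang_MathComp2016}, which treat both pieces through a Radau-polynomial expansion. The same reasoning applies to $L^{y-}$ with $x$ and $y$ swapped, and to $H^+$ with $\mathcal{P}^+$ and upwind traces, which gives the exact-zero statement.

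For \eqref{ineq:superconvergence-1}, on each cell $K$ let $\Pi_K u$ be a polynomial of degree $r+1$ (the averaged Taylor polynomial) with
\[
|u-\Pi_K u|_{H^s(K)}\le Ch^{r+2-s}\|u\|_{H^{r+2}(K)}, \qquad s = 0,1 .
\]
Since $\eta$ is linear in $u$, the exact-zero case lets us replace $u$ by $u - \Pi_K u$. This step requires the cancellation to be cellwise, or at least local to an edge patch. If only global cancellation is available, I would apply it to a piecewise-$P^{r+1}$ function assembled cell by cell, using the fact that the identity above involves only one-sided traces $\eta^-$ determined within each cell.

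Next, bound the resulting cell and edge integrals by Cauchy-Schwarz, Lemma \ref{lem:proj-property-2d} (applied with $H^{r+2}$ regularity via the $H^1$-stability of the projections), and the inverse inequalities $\|(w_1)_x\|_K\le Ch^{-1}\|w_1\|_K$ and $\|w_1\|_{\partial K}\le Ch^{-1/2}\|w_1\|_K$. Each term is then at most $C h^{r+2}\cdot h^{-1}\|u\|_{H^{r+2}(K)}\|w_1\|_{K\cup K'}$. Summing over cells with Cauchy-Schwarz gives $Ch^{r+1}\|u\|_{r+2}(\|w_1\|+\|w_2\|)$. The estimate \eqref{ineq:superconvergence-2} follows identically. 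I expect the main obstacle to be the exact cancellation for $P^{r+1}$ data in the mixed-direction terms, because that is where the tensor-product structure is essential.
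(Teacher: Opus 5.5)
Your splitting $\eta=(\mathcal{P}_x^-u-u)+\mathcal{P}_x^-(\mathcal{P}_y^-u-u)$ is correct, and so is the exact vanishing of the first piece. The gap is the step you defer to the references: your Bramble--Hilbert argument rests on a \emph{cellwise} cancellation for $P^{r+1}$ data, and that cancellation is false.

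Finish your own computation. Suppose $u|_K$ equals $c_K\,y^{r+1}$ plus terms of $y$-degree at most $r$ on $K=I_i\times J_j$. Then $\mathcal{P}_x^-(\mathcal{P}_y^-u-u)=c_K R_j(y)$ with $R_j:=\mathcal{P}_y^-(y^{r+1})-y^{r+1}$. The volume term on $K$, together with the edge term at $x_{i+1/2}$ (the one that uses $\eta^-$ from $K$), equals
\[
-c_K\int_{J_j}R_j(y)\,\bigl[w_1(x_{i+1/2}^+,y)-w_1(x_{i-1/2}^+,y)\bigr]\,\mathrm{d}y .
\]
This couples traces of $w_1$ from two different cells, and $R_j$ is orthogonal only to $P^{r-1}(J_j)$, so it does not vanish. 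For the sum along a row to vanish for every $w_1$, $c_K$ must be the same in every cell of that row.

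A concrete counterexample: take $r=0$, $N_x\ge 2$, $u=y$ on $I_1\times J_1$ and $u=0$ elsewhere, $w_1$ the indicator of $I_1\times J_1$, and $w_2=0$. Then $L^-(\mathcal{P}^-u-u,w_1,w_2)=h_{y,1}^2/2\neq 0$. So the exact-zero statement fails for merely cellwise $P^{r+1}$ data. Subtracting cellwise Taylor polynomials $\Pi_K u$ therefore leaves a remainder of the form
\[
\sum_i (c_{K_{i-1,j}}-c_{K_{i,j}})\int_{J_j}R_j\,w_1(x_{i-1/2}^+,y)\,\mathrm{d}y
\]
(plus its $y$-analogue), which your argument never accounts for. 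It can be bounded using $|c_K-c_{K'}|\le C|u|_{H^{r+2}(K\cup K')}$ for neighbouring cells, but you would have to supply that argument. Separately, $\mathcal{P}^{\pm}$ is not $H^1$-stable in two dimensions because of the corner point value. The local bound you need uses $H^2$, which is available here.

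A cleaner route avoids Bramble--Hilbert altogether. Keep $u$ itself and set $\zeta:=\mathcal{P}_y^-u-u$. It is built line by line in $y$, so it is continuous and periodic in $x$, with $\zeta_x=\mathcal{P}_y^-u_x-u_x$. Two facts then apply: $(w_1)_x(\cdot,y)\in P^{r-1}(I_i)$, and $(\mathcal{P}_x^-\zeta)(x_{i\pm1/2}^-,y)=\zeta(x_{i\pm1/2},y)$. One integration by parts in $x$ shows that the second piece contributes, on each cell,
\[
-\iint_{I_i\times J_j}\zeta\,(w_1)_x\,\mathrm{d}x\mathrm{d}y+\int_{J_j}\bigl(\zeta(x_{i+1/2},y)\,w_1(x_{i+1/2}^-,y)-\zeta(x_{i-1/2},y)\,w_1(x_{i-1/2}^+,y)\bigr)\,\mathrm{d}y=\iint_{I_i\times J_j}\zeta_x\,w_1\,\mathrm{d}x\mathrm{d}y .
\]
Hence
\[
L^{x-}(\mathcal{P}^-u-u,w_1)=\sum_{i,j}\iint_{I_i\times J_j}\bigl(\mathcal{P}_y^-u_x-u_x\bigr)\,w_1\,\mathrm{d}x\mathrm{d}y .
\]
The one-dimensional Radau estimate applied to $u_x$ in the $y$-variable then gives $|L^{x-}(\mathcal{P}^-u-u,w_1)|\le Ch^{r+1}\|\partial_x\partial_y^{r+1}u\|\,\|w_1\|$. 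Swapping $x$ and $y$ handles $L^{y-}$, and the same computation with $\mathcal{P}^+$ and the traces $\eta^+$ handles $H^+$. This gives both estimates.

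The identity also shows what the exact-zero clause can legitimately say. It holds when $u$ and $v$ are single polynomials of degree at most $r+1$ on all of $\mathcal{D}$, since then $\zeta$ does not depend on $x$ and $\zeta_x=0$. It does not hold for data that are $P^{r+1}$ only cellwise.
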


For the operators $H^+$ and $L^-$, we summarize the following properties induced by the LDG spatial discretization and the alternating flux. The proof is a straightforward extension of the one-dimensional case given in \cite{WangShuZhang_SIAMNu2015} and is omitted here. 
\begin{lemma}
\label{lem:num-flux-2D}
For any $(v,w_1,w_2) \in \mathbb{V}_h^3$, we have
\begin{align}
    & |L^-(v,w_1,w_2)| \leq \left(\| \nabla v\| + \mu h^{-\frac{1}{2}} \| [v]\|_{\Gamma_h} \right) \left(\| w_1\| + \| w_2\| \right), \label{ineq:alternating-flux-2d} \\
    & H^+(w_1,w_2,v) + L^-(v,w_1,w_2) = 0.  \notag
\end{align}
\end{lemma}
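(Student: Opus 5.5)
The plan is to prove the identity first and then the bound; both come from cellwise integration by parts combined with a rearrangement of the interface sums, using periodicity so that there are no boundary contributions.

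\emph{The identity.} Write out $H^+(w_1,w_2,v)+L^-(v,w_1,w_2)$ cell by cell. On $I_i\times J_j$ the volume terms are
\[
-\iint (w_1 v_x + w_2 v_y)\,\mathrm{d}x\,\mathrm{d}y - \iint (v\,w_{1,x} + v\,w_{2,y})\,\mathrm{d}x\,\mathrm{d}y .
\]
Integrating by parts in $x$ (resp.\ $y$), these equal minus the boundary integrals of the interior traces:
\[
-\int_{J_j}\bigl[(w_1^-v^-)_{i+\frac12,y}-(w_1^+v^+)_{i-\frac12,y}\bigr]\,\mathrm{d}y,
\]
together with the analogous $y$-term. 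Adding the flux terms $w_1^+v^-$ and $v^-w_1^-$ from $H^+_{i,j}$ and $L^{x-}_{i,j}$, every $x$-interface $x_{i+\frac12}$ collects, after summation over $i$ with periodic wrap-around,
\[
\int_{J_j}\Bigl(-w_1^-v^- + w_1^+v^- + v^-w_1^- - w_1^+v^+ + w_1^+v^+ - v^-w_1^+\Bigr)\,\mathrm{d}y .
\]
The terms with $w_1^+v^+$ come from cell $i+1$. The first group comes from cell $i$'s right face, and the second from cell $i+1$'s left face, namely $+w_1^+v^+$ from integration by parts, then $-w_1^+v^+$ from $H^+$ and $-v^-w_1^+$ from $L^-$. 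Checking the signs, everything cancels exactly. The $y$-interfaces are handled identically. This bookkeeping of signs is the only place care is needed, so I will do it carefully for one generic interface.

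\emph{The bound.} Integrate $L^{x-}_{i,j}(v,w_1)$ by parts once to move the derivative back onto $v$:
\[
L^{x-}_{i,j}(v,w_1)=\iint v_x w_1\,\mathrm{d}x\,\mathrm{d}y - \int_{J_j}\bigl[(v^- - v^-)w_1^-\bigr]_{i+\frac12,y}\,\mathrm{d}y - \int_{J_j}\bigl[(v^- - v^+)w_1^+\bigr]_{i-\frac12,y}\,\mathrm{d}y .
\]
The right-face term vanishes and the left-face term equals $\int_{J_j}[v]_{i-\frac12,y}\,w_1^+\,\mathrm{d}y$. Summing over cells gives
\[
L^{x-}(v,w_1)=(v_x,w_1)+\sum_{i,j}\int_{J_j}[v]_{i-\frac12,y}\,(w_1^+)_{i-\frac12,y}\,\mathrm{d}y .
\]
The same computation in $y$ gives the analogous expression for $L^{y-}(v,w_2)$, with jumps $[v]_{x,j-\frac12}$ and traces $w_2^+$.

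Apply Cauchy--Schwarz to each piece. The volume terms are bounded by $\|\nabla v\|(\|w_1\|+\|w_2\|)$. For the jump terms, use the trace inverse inequality on polynomials, $\|w^+\|_{L^2(\partial K)}\le \mu h^{-1/2}\|w\|_K$, which is valid on the quasi-uniform mesh with $\mu$ independent of $h$. This bounds the jump terms by $\mu h^{-1/2}\|[v]\|_{\Gamma_h}(\|w_1\|+\|w_2\|)$. Combining the two gives \eqref{ineq:alternating-flux-2d}.
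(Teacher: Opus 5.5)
Your proof is correct and follows the same route the paper has in mind; the paper omits the proof as a straightforward extension of the one-dimensional LDG argument. For the identity $H^+(w_1,w_2,v)+L^-(v,w_1,w_2)=0$ you use cellwise integration by parts with periodic telescoping, and for \eqref{ineq:alternating-flux-2d} you rewrite $L^-(v,w_1,w_2)$ as the broken volume term $(v_x,w_1)+(v_y,w_2)$ plus the face sums $\sum\int [v]\,w^+$, then apply Cauchy--Schwarz and the trace inverse inequality on the quasi-uniform mesh; your interface sign bookkeeping and the surviving term $\int_{J_j}[v]_{i-\frac12,y}(w_1^+)_{i-\frac12,y}\,\mathrm{d}y$ both check out.
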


Next, we present three lemmas addressing the stochastic aspects of our numerical analysis. The discrete Kolmogorov lemma below helps establish pathwise error estimates for the numerical scheme. It is closely related to the classic Kolmogorov continuity theorem \cite{DaPrato_Zabczyk_1992}, which gives a criterion for pathwise regularity (H\"older continuity) of a continuous stochastic process. 

\begin{lemma}[Discrete Kolmogorov Theorem]
\label{lem:Kolmogorov}
    Let $\{ X(t)\}_{t \in [0,T]}$ be a stochastic process taking values in a separable Banach space with norm $\| \cdot\|$. Let $0 = t_0 < t_1 < \cdots < t_N = T$ be a uniform partition of the interval $[0,T]$, with $k = {T}/{N}$. Suppose that $\{X_n^N \}_{n=0}^N$ is a discrete stochastic process approximating $X$ at the grid points, in the sense that there exist constants $\nu,\beta >0$, $C>0$, and a positive integer $N_0$ such that, for every $N \geq N_0$,
    \[ \mathbb{E}\left[ \max_{0 \leq n \leq N} \| X(t_n)-X_n^N\|^{\nu} \right] \leq C \left(\frac{1}{N} \right)^{1+\beta}.   \]
    Then, for any $0 \leq \gamma < {\beta}/{\nu}$, there exists a random variable $Z(\omega,\gamma)$ with $\mathbb{E}[|Z|^{\nu}] < \infty$, such that 
    \begin{align}
    \label{ineq:kolmogorov}
        \max_{0 \leq n \leq N} \|X(t_n,\omega)-X_n^N(\omega)\| \leq Z(\omega,\gamma) \left(\frac{1}{N} \right)^{\gamma}, \qquad \forall N \geq N_0.
    \end{align} 
    % for any $N \geq N_0$.
\end{lemma}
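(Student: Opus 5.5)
The idea is a Borel--Cantelli argument over the dyadic-like sequence of all $N\ge N_0$, with $Z$ built as a supremum that can be bounded in $L^\nu$ by summing the moment bounds. Set
\[
E_N(\omega):=\max_{0\le n\le N}\|X(t_n,\omega)-X_n^N(\omega)\|,
\]
which is a measurable nonnegative random variable; separability of the Banach space guarantees measurability of the norm. Fix $0\le\gamma<\beta/\nu$ and define
\[
Z(\omega,\gamma):=\sup_{N\ge N_0} N^{\gamma}E_N(\omega).
\]
By construction, $E_N\le Z\,N^{-\gamma}$ for every $N\ge N_0$, which is exactly \eqref{ineq:kolmogorov}. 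So the whole task reduces to showing $\mathbb{E}[|Z|^\nu]<\infty$; in particular this gives $Z<\infty$ almost surely.

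To bound the moment, I would replace the supremum by a sum. Since the terms are nonnegative,
\[
|Z|^\nu=\sup_{N\ge N_0}N^{\gamma\nu}E_N^\nu\le\sum_{N\ge N_0}N^{\gamma\nu}E_N^\nu .
\]
Then Tonelli's theorem and the hypothesis give
\[
\mathbb{E}[|Z|^\nu]\le\sum_{N\ge N_0}N^{\gamma\nu}\,\mathbb{E}[E_N^\nu]\le C\sum_{N\ge N_0}N^{\gamma\nu-1-\beta}.
\]
This series converges precisely because $\gamma\nu-1-\beta<-1$, i.e.\ $\gamma<\beta/\nu$. The resulting bound is finite and depends only on $C,\beta,\nu,\gamma,N_0$.

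The only step that needs real care is this summability. It is the reason for the strict inequality $\gamma<\beta/\nu$ and for the exponent $1+\beta$ in the hypothesis: the supremum runs over all integers $N$, not a dyadic subsequence, so the extra factor $N^{-1}$ is needed to make the series converge. If one preferred a dyadic subsequence, one would have to interpolate between different $N$. That interpolation is unavailable here, because the discrete processes $X^N$ for different $N$ are unrelated. So I would sum over all $N$ directly as above, and no chaining argument is required.
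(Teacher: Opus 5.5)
Your proposal is correct and is essentially the paper's proof: define $Z(\omega,\gamma)$ as the supremum over $N\ge N_0$ of $N^{\gamma}\max_{0\le n\le N}\|X(t_n)-X_n^N\|$, bound $Z^{\nu}$ by the corresponding sum, and use the hypothesis termwise to get the series $\sum_{N\ge N_0} N^{\gamma\nu-1-\beta}$, which converges since $\gamma<\beta/\nu$. Writing $\sup$ instead of the paper's $\max$ is slightly more careful. Your opening mention of a Borel--Cantelli or dyadic argument does not match what you actually do, which is the direct summation over all $N$.
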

\begin{proof}
    We define $ Z(\omega, \gamma) = \displaystyle \max_{N\geq N_0} \max_{0 \leq n \leq N} N^{\gamma} \| X(t_n,\omega)-X_n^N(\omega)\|$, and note that
    \begin{align*}
        \mathbb{E}[Z^{\nu}] &\leq \mathbb{E} \left[ \max_{N\geq N_0} \max_{0 \leq n \leq N} N^{ \nu\gamma} \| X(t_n)-X_n^N\|^{\nu} \right] \\
        &\leq \sum_{N=N_0}^{\infty} N^{ \nu\gamma} \,\mathbb{E}\left[ \max_{0 \leq n \leq N} \| X(t_n)-X_n^N\|^{\nu} \right] \leq C\sum_{N=N_0}^{\infty} \frac{1}{N^{1+\beta - \nu\gamma}} < \infty,
    \end{align*}
    since $\gamma < \beta/\nu$. The desired estimate \eqref{ineq:kolmogorov} follows directly from the definition of $Z(\omega,\gamma)$.
    % By definition, we have $\displaystyle \max_{0 \leq n \leq N} \|X(t_n,\omega)-X_n^N(\omega)\| \leq Z(\omega,\gamma) N^{-\gamma} $ for any $N \geq N_0$. %{\color{blue}more details}
\end{proof}

The classical Burkholder-Davis-Gundy (BDG) inequality relates the maximal function of a local martingale to its quadratic variation. Both the continuous and discrete versions play an important role in the numerical analysis in Sections \ref{sec:stability-2d} and  \ref{sec:error-estimate-2d}. We recall below the discrete version. %The continuous version of the inequality can be found in many stochastic calculus texts, while \cite{ondrejat_Prohl_Walkington_2023} provides a discrete version included below for reader's convenience.  
\begin{lemma}[Discrete Burkholder-Davis-Gundy Inequality \cite{ondrejat_Prohl_Walkington_2023}]
\label{lem:BDG}
    Let $(\Omega, \mathcal{F}, \mathbb{P})$ be a probability space equipped with a (discrete) filtration $\{\mathcal{F}^n\}_{n=0}^{N}$. Let $\{X^n\}_{n=0}^{N}$ with $X^0 \equiv 0$ be an $\{\mathcal{F}^n\}_{n=0}^{N}$-martingale taking values in a separable Hilbert space $H$. Then for each $p \geq 1$ there exist constants $0 < 1/C_b' < C_b$ such that 
    \begin{align*}
        \frac{1}{C_b'} \mathbb{E} \left[ \left( \sum_{n=1}^{N} \| X^n - X^{n-1} \|_H^2 \right)^{p/2} \right] &\leq \mathbb{E} \left[ \max_{0 \leq n \leq N} \| X^n \|_H^p \right] 
        \leq C_b \mathbb{E} \left[ \left( \sum_{n=1}^{N} \| X^n - X^{n-1} \|_H^2 \right)^{p/2} \right]. 
        % \\
        % C_p' \mathbb{E} \left[ \left( \sum_{n=1}^{N} \mathbb{E}\left[\| X_\tau^n - X_\tau^{n-1} \|_H^2 | \mathcal{F}_{n-1} \right] \right)^{p/2} \right] &\leq \mathbb{E} \left[ \max_{0 \leq n \leq N} \| X_\tau^n \|_H^p \right] 
        % \leq C_p \mathbb{E} \left[ \left( \sum_{n=1}^{N} \mathbb{E}\left[\| X_\tau^n - X_\tau^{n-1} \|_H^2 | \mathcal{F}_{n-1} \right] \right)^{p/2} \right].
    \end{align*}
\end{lemma}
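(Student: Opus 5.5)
The statement is the discrete BDG inequality, cited from \cite{ondrejat_Prohl_Walkington_2023}. My plan is to derive it from the classical continuous-time BDG inequality for Hilbert-space-valued martingales by embedding the discrete martingale into a continuous-time one.

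\textbf{Embedding.} Given the $\{\mathcal F^n\}$-martingale $\{X^n\}$, I define a filtration on $[0,N]$ by $\mathcal G_t:=\mathcal F^{\lfloor t\rfloor}$ and set $M_t:=X^{\lfloor t\rfloor}$. This is a càdlàg $H$-valued martingale with respect to $\{\mathcal G_t\}$. Its running maximum is exactly $\max_{0\le n\le N}\|X^n\|_H$. Because $M$ is pure jump with jumps $X^n-X^{n-1}$, its quadratic variation is
\[
[M]_N=\sum_{n=1}^N\|X^n-X^{n-1}\|_H^2 .
\]

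\textbf{Applying continuous BDG.} The continuous BDG inequality holds for càdlàg martingales in a separable Hilbert space, for every $p\ge1$, with the quadratic variation $[M]$. It gives both inequalities directly, with constants depending only on $p$. Separability of $H$ lets one reduce to scalar martingales via an orthonormal basis, or invoke the Hilbert-space version directly.

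\textbf{Self-contained route for $p\ge2$.} If a self-contained argument is preferred, I would argue as follows.
\begin{itemize}
\item Expand $\|X^n\|^2=\|X^{n-1}\|^2+2(X^{n-1},\Delta X^n)+\|\Delta X^n\|^2$. This shows $\|X^n\|^2-\sum_{m\le n}\|\Delta X^m\|^2$ is a real martingale.
\item Apply Doob's maximal inequality.
\item Control the cross martingale $\sum_n(X^{n-1},\Delta X^n)$ by the scalar discrete BDG (Burkholder's square-function inequality), with square function bounded by $\max_n\|X^n\|\,(\sum_n\|\Delta X^n\|^2)^{1/2}$.
\item Close with Young's inequality.
\end{itemize}

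\textbf{Main obstacle.} The hard part is the lower inequality and the range $1\le p<2$. The upper bound at $p=1$ in particular requires Davis' decomposition, which splits the martingale into a part with predictably controlled jumps plus a part of bounded variation. For this I would rely on the cited reference and on Burkholder's scalar results rather than reprove them.
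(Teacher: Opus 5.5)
The paper gives no proof of this lemma; it simply quotes it from Ondrej\'at--Prohl--Walkington, so your proposal supplies a justification the paper omits rather than reconstructing one.

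Your main route is correct. With $\mathcal G_t=\mathcal F^{\lfloor t\rfloor}$ and $M_t=X^{\lfloor t\rfloor}$ you get a c\`adl\`ag $H$-valued martingale with $M_0=0$, $\sup_{t\le N}\|M_t\|_H=\max_n\|X^n\|_H$ and $[M]_N=\sum_n\|X^n-X^{n-1}\|_H^2$. The BDG inequality in terms of $[M]$ for c\`adl\`ag Hilbert-space-valued martingales (valid for all $p\ge1$ with dimension-free constants) then gives both bounds at once. This is the same piecewise-constant embedding the paper uses in Remark~\ref{rmk:BDG}, applied to the lemma itself.

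It is, however, a reduction to a theorem that contains the lemma as a special case. So it is a repackaging rather than an independent proof, and the most direct references are the discrete Burkholder ($1<p<\infty$) and Davis ($p=1$) inequalities for Hilbert-space-valued martingales. You should also drop the suggested reduction to scalar martingales via an orthonormal basis. The quantity $\max_n\|X^n\|_H$ does not split over coordinates, and a coordinatewise use of the scalar inequality does not give dimension-free constants.

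Your elementary sketch for $p\ge2$ is sound and proves more than you credit it with. Set $S_N^2=\sum_n\|X^n-X^{n-1}\|_H^2$ and $Y^n=\sum_{m\le n}(X^{m-1},X^m-X^{m-1})_H$. Then the following give the \emph{lower} inequality exactly as the mirror argument gives the upper one:
\begin{itemize}
\item the identity $S_N^2=\|X^N\|_H^2-2Y^N$;
\item scalar BDG for $Y$ with exponent $p/2\ge1$;
\item the bound $\bigl(\sum_m (X^{m-1},X^m-X^{m-1})_H^2\bigr)^{1/2}\le \max_n\|X^n\|_H\,S_N$;
\item Young's inequality.
\end{itemize}
The absorptions are legitimate because, for finite $N$, $\max_n\|X^n\|_H\le\sqrt N\,S_N$ and $S_N\le 2\sqrt N\max_n\|X^n\|_H$, so the absorbed quantity is finite whenever the right-hand side is.

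So only the range $1\le p<2$ genuinely needs the literature. The paper only ever applies the lemma with $p=2q\ge2$ (in \eqref{ineq:stability-I1-2D} and in the bound for $\mathcal T_2$), so your self-contained argument already covers every use made of it.
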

\begin{remark}[Continuous and Discrete BDG Inequalities for Discrete Martingales as It\^o Integrals]
\label{rmk:BDG}
    In our fully-discrete numerical analysis, the discrete martingales under consideration are It\^o integrals of the form $I_m = \sum_{n=0}^m G^n \Delta W_n$, where $G^n$ is a function that depends on the numerical solutions at time step $t_n$. Then by virtue of the continuous and discrete BDG inequalities, we have for $p \geq 1$ and $N \leq N_T$ 
    \begin{align*}
        \frac{1}{C_b'}\mathbb{E}\left[ \left( \sum_{n=0}^{N} \| G^n \Delta W_n \|^2 \right)^{p/2} \right] \leq \mathbb{E} \left[ \max_{0 \leq m \leq N} \left\| \sum_{n=0}^m G^n \Delta W_n  \right\|^p \right] \leq C_b \mathbb{E}\left[ \left( kK\sum_{n=0}^{N} \| G^n \|^2 \right)^{p/2} \right],
    \end{align*}
    where the left ``$\leq$'' follows from Lemma \ref{lem:BDG} and the right ``$\leq$" is derived from the continuous BDG inequality, since the discrete martingale $I_m$ can be viewed as an It\^o integral using piecewise constant interpolation. 
\end{remark}

Lastly, we establish a higher-moment H\"older continuity estimate in time for the strong solution $u$ in the spatial $H^{m}$ norm. We also note that Lemma \ref{lem:Holder-High-Moments} and Corollary \ref{coro:Holder-High-Moments} hold for higher spatial dimensions. 
\begin{lemma}[Higher Moment H\"older Continuity Estimate]
\label{lem:Holder-High-Moments}
    Let $u$ be the strong solution of \eqref{spde:conv-diff-2d}, and let $p \geq 1$. Suppose that $u \in L^{2p}\left(\Omega,L^{\infty}[0,T; H^{m+1}] \right)$, $w_1, w_2 \in L^{2p}\left(\Omega \times [0,T]; H^{m+1} \right)$, and $\psi(\cdot,u,\nabla u) \in L^{2p}\left(\Omega \times [0,T]; H^{m} \right)$, where $m$ is a nonnegative integer. In addition, we assume 
    \begin{equation}
        % \| g(\omega,\cdot,t,u,\nabla u)\|_{\mathcal{L}_2^m} \leq C\| g(\omega,\cdot,t,u,\nabla u)\|_m^{2p} \leq C\left(1 + \| u\|_m^{2p} + \| \nabla u \|_m^{2p} \right), \quad \forall (w,t) \in \Omega \times [0,T], 
        \| g(\omega,\cdot,t,u,\nabla u)\|_{\mathcal{L}_2^m}^2 \leq C\left(1 + \| u\|_m^{2} + \| \nabla u \|_m^{2} \right), \qquad \forall (\omega,t) \in \Omega \times [0,T], 
    \label{eq:holder_1}
    \end{equation}
    then 
    \[ \mathbb{E}\left[\| u(t)-u(s)\|_{m}^{2p} \right] \leq C(t-s)^p, \qquad \forall \, 0 \leq s < t \leq T. \] 
\end{lemma}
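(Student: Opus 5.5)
We use the integral (mild-in-time, strong-in-space) form of the equation on $[s,t]$. With $w=(w_1,w_2)=A(\cdot,u)\nabla u$, the strong solution satisfies
\begin{equation*}
u(t)-u(s) = \int_s^t \nabla\!\cdot w(\tau)\,\mathrm{d}\tau + \int_s^t \psi(\cdot,\tau,u,\nabla u)\,\mathrm{d}\tau + \int_s^t g(\cdot,\tau,u,\nabla u)\,\mathrm{d}W_\tau
\end{equation*}
as an identity in $H^m$. We take the $H^m$ norm and raise it to the power $2p$. The elementary bound $(a+b+c)^{2p}\le 3^{2p-1}(a^{2p}+b^{2p}+c^{2p})$ then reduces the claim to estimating three terms separately.

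\textbf{Deterministic terms.} Minkowski's inequality for Bochner integrals and H\"older's inequality in time give
\begin{equation*}
\Big\|\int_s^t \nabla\!\cdot w\,\mathrm{d}\tau\Big\|_m^{2p}\le (t-s)^{2p-1}\int_s^t \|\nabla\!\cdot w\|_m^{2p}\,\mathrm{d}\tau\le (t-s)^{2p-1}\int_s^t (\|w_1\|_{m+1}+\|w_2\|_{m+1})^{2p}\,\mathrm{d}\tau .
\end{equation*}
Taking expectations, the hypothesis $w_1,w_2\in L^{2p}(\Omega\times[0,T];H^{m+1})$ bounds this by $C(t-s)^{2p-1}$. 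The $\psi$-term is treated the same way using $\psi\in L^{2p}(\Omega\times[0,T];H^m)$.

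The bound $C(t-s)^{2p-1}$ alone is only $\le C(t-s)^p$ when $2p-1\ge p$, i.e.\ $p\ge1$ and $t-s\le T$, in which case $(t-s)^{2p-1}\le T^{p-1}(t-s)^p$. So the drift terms contribute $C(t-s)^p$.

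Note that we only used integrability on the whole of $[0,T]$, not pointwise-in-time bounds. If one had $\sup_\tau$ bounds, the drift terms would give $(t-s)^{2p}$, but $(t-s)^{2p-1}$ suffices.

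\textbf{Stochastic term (main step).} The process $\tau\mapsto \int_s^\tau g\,\mathrm{d}W$ is an $H^m$-valued martingale. We apply the continuous BDG inequality in $H^m$ at the final time $t$:
\begin{equation*}
\mathbb{E}\Big\|\int_s^t g\,\mathrm{d}W_\tau\Big\|_m^{2p}\le C_b\,\mathbb{E}\Big(\int_s^t \|g(\tau)\|_{\mathcal L_2^m}^2\,\mathrm{d}\tau\Big)^{p}.
\end{equation*}
Assumption \eqref{eq:holder_1} bounds the integrand by $C(1+\|u\|_m^2+\|\nabla u\|_m^2)\le C(1+\|u\|_{m+1}^2)$. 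Hence the right-hand side is at most
\begin{equation*}
C(t-s)^p\,\mathbb{E}\Big[\sup_{\tau\in[0,T]}\big(1+\|u(\tau)\|_{m+1}^2\big)^p\Big]\le C(t-s)^p,
\end{equation*}
by $u\in L^{2p}(\Omega,L^\infty[0,T;H^{m+1}])$.

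This is the step that produces the sharp exponent $p$. It depends on the $\sup$-in-time moment bound; only $L^{2p}$-in-time integrability would not suffice, which is why that assumption is placed on $u$.

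\textbf{Remaining technical points.} One is justifying BDG in the Hilbert space $H^m$ with the $\mathcal Q$-Wiener noise, which is handled by the standard Hilbert-space version with the $\mathcal L_2^m$ norm as defined in the paper. The other is making sure the constant $C$ does not depend on $s,t$. Combining the three bounds gives $\mathbb{E}\|u(t)-u(s)\|_m^{2p}\le C(t-s)^p$.
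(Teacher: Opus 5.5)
Your proposal is correct and follows essentially the same route as the paper: the same integral identity on $[s,t]$, H\"older in time for the drift terms giving $(t-s)^{2p-1}\le T^{p-1}(t-s)^p$, and the BDG inequality combined with \eqref{eq:holder_1} and the $\sup$-in-time moment bound on $u$ for the stochastic term. The only cosmetic difference is that you bound $\mathbb{E}\bigl(\int_s^t\|g\|_{\mathcal{L}_2^m}^2\,\mathrm{d}\tau\bigr)^p$ directly by the supremum, whereas the paper first applies H\"older to pass to $(t-s)^{p-1}\mathbb{E}\int_s^t\|g\|_{\mathcal{L}_2^m}^{2p}\,\mathrm{d}\tau$; both yield the same $C(t-s)^p$ bound.
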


\begin{proof}
% We first note that 
% \begin{gather*}
%      u\in L^4(\Omega\times [0,T]; W^{m+2,4}]) \Rightarrow u^2 \in L^2(\Omega \times [0,T]; H^{m+2}]) \\
%      u, u^2 \in L^2(\Omega \times [0,T]; H^{m+2}]) \Rightarrow w \in L^2(\Omega \times [0,T]; H^{m+1}])
% \end{gather*}
% \begin{gather*}
%     (u(t)-u(s),z) = \left( \int_s^t w_x \, d\tau , z\right) + \left( \int_s^t f(u)_x \, d\tau , z\right) + \left( \int_s^t \psi \, d\tau , z \right) 
%     + \left( \int_s^t g \, dW_{\tau} ,z \right) \quad \forall z \in L^2
% \end{gather*}
The exact solution satisfies:
\[ u(t)-u(s) = \int_s^t (w_{1,x} + w_{2,y} + \psi) \, \mathrm{d}\tau + \int_s^t g \, \mathrm{d}W_{\tau}.  \]
Utilizing the Jensen's inequality and H\"older inequalities yields
\begin{align*}
    \| u(t)-u(s)\|_m^{2p} &\leq 4^{2p-1} \left\| \int_s^t w_{1,x} \, \mathrm{d}\tau \right\|_m^{2p} + 4^{2p-1} \left\| \int_s^t w_{2,y} \, \mathrm{d}\tau \right\|_m^{2p} + 4^{2p-1} \left\| \int_s^t \psi \, \mathrm{d}\tau \right\|_m^{2p} + 4^{2p-1} \left\| \int_s^t g \, \mathrm{d}W_{\tau} \right\|_m^{2p} \\
    &\leq 4^{2p-1}(t-s)^{2p-1} \int_s^t \| w_{1,x}\|_m^{2p} + \| w_{2,y}\|_m^{2p} + \| \psi\|_m^{2p}\, \mathrm{d}\tau + 4^{2p-1} \left\| \int_s^t g \, \mathrm{d}W_{\tau} \right\|_m^{2p}. 
\end{align*}
Taking the expectation and applying the BDG inequality, H\"older inequality and the assumption \eqref{eq:holder_1}, we get
\begin{align*}
    &\mathbb{E}\left[\| u(t)-u(s)\|_m^{2p} \right] \leq 4^{2p-1}(t-s)^{2p-1} \mathbb{E}\left[ \int_s^t \| w_{1} \|_{m+1}^{2p} + \| w_{2}\|_{m+1}^{2p} + \| \psi\|_m^{2p} \, \mathrm{d}\tau \right] + 4^{2p-1} \mathbb{E}\left[ \left\| \int_s^t g \, \mathrm{d}W_{\tau} \right\|_m^{2p}  \right] \\
    &\qquad \leq 4^{2p-1}(t-s)^{2p-1} \mathbb{E}\left[ \int_s^t \| w_1 \|_{m+1}^{2p} + \| w_2\|_{m+1}^{2p} + \| \psi\|_m^{2p} \, \mathrm{d}\tau \right] + C (t-s)^{p-1} \mathbb{E}\left[ \int_s^t \| g \|_{\mathcal{L}_2^m}^{2p} \, \mathrm{d}\tau \right] \\
    &\qquad \leq C (t-s)^{2p-1} + C(t-s)^{p} \mathbb{E}\left[ \sup_{\tau \in [0,T]} \|u(\cdot,\tau)\|_{m+1}^{2p} \right]  \leq C (t-s)^{p},
\end{align*}
which completes the proof.
\end{proof}

As an immediate result of Lemma \ref{lem:Holder-High-Moments}, we have the following corollary. 
\begin{corollary}
\label{coro:Holder-High-Moments}
    Let $u$ be the strong solution of \eqref{spde:conv-diff-2d}, let $p \geq 1$ and $m$ be a nonnegative integer. If $w_1,w_2 \in L^{2p}(\Omega \times [0,T]; H^{m+1})$, $\psi(\cdot,u,\nabla u) \in L^{2p}(\Omega \times [0,T]; H^{m})$, and if $g(\cdot,u,\nabla u) \in L^{2p}(\Omega \times [0,T]; \mathcal{L}_2^m)$ satisfies \eqref{eq:holder_1},
    %  \begin{equation*}
    %     \| g(\omega,\cdot,t,u,\nabla u)\|_{\mathcal{L}_2^m} \leq C\| g(\omega,\cdot,t,u,\nabla u)\|_m^{}, \quad \forall (w,t) \in \Omega \times [0,T], 
    % \end{equation*}
    then for any uniform partition of $[0,T]$ given by $t_n = nT/N_T$, $n = 0,1,\ldots,N_T$, there exists a constant $C$ independent of $k$, such that
    \[ \frac{1}{k^{p-1}}\sum_{n=0}^{N_T-1} \mathbb{E}\left[ \| u(t_{n+1}) - u(t_n)\|_m^{2p} \right] \leq C. \]
\end{corollary}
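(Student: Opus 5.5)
The corollary follows by applying Lemma \ref{lem:Holder-High-Moments} on each subinterval $[t_n,t_{n+1}]$ and summing. I will first check that its hypotheses hold, then do the summation.

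\textbf{Hypotheses.} The assumptions on $w_1,w_2$, $\psi$ and the bound \eqref{eq:holder_1} on $g$ are given directly in the corollary. The one item not listed is $u \in L^{2p}(\Omega,L^\infty[0,T;H^{m+1}])$. I will take it as part of the regularity of the strong solution under the standing assumptions, and I expect this to be the only delicate point.

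It is worth recording where the lemma's proof actually uses this bound. There it serves only to control
\[
\mathbb{E}\Big[\int_s^t \|g\|_{\mathcal L_2^m}^{2p}\,\mathrm{d}\tau\Big]
\le C\,\mathbb{E}\Big[\int_s^t \big(1+\|u\|_{m+1}^{2p}\big)\,\mathrm{d}\tau\Big],
\]
which is at most $C(t-s)$. The corollary instead assumes $g(\cdot,u,\nabla u) \in L^{2p}(\Omega\times[0,T];\mathcal L_2^m)$. That gives the stochastic term directly: by the BDG inequality and H\"older's inequality in time,
\[
\mathbb{E}\Big[\Big\|\int_{t_n}^{t_{n+1}} g\,\mathrm{d}W_\tau\Big\|_m^{2p}\Big]
\le C\,k^{p-1}\,\mathbb{E}\Big[\int_{t_n}^{t_{n+1}}\|g\|_{\mathcal L_2^m}^{2p}\,\mathrm{d}\tau\Big].
\]
Keeping this integrated form, rather than bounding it by $Ck^p$ uniformly, is actually better suited to the summation. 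So if the sup-in-time bound on $u$ were unavailable, I would simply rerun the lemma's argument on each subinterval with these integrated bounds.

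\textbf{Summation.} On $[t_n,t_{n+1}]$ the same Jensen/H\"older and BDG steps as in the lemma give
\[
\mathbb{E}\big[\|u(t_{n+1})-u(t_n)\|_m^{2p}\big]
\le C k^{2p-1}\,\mathbb{E}\int_{t_n}^{t_{n+1}}\big(\|w_1\|_{m+1}^{2p}+\|w_2\|_{m+1}^{2p}+\|\psi\|_m^{2p}\big)\,\mathrm{d}\tau
+ C k^{p-1}\,\mathbb{E}\int_{t_n}^{t_{n+1}}\|g\|_{\mathcal L_2^m}^{2p}\,\mathrm{d}\tau .
\]
Summing over $n=0,\dots,N_T-1$ collapses the integrals to integrals over $[0,T]$, which are finite by the assumed integrability. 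Hence
\[
\sum_{n=0}^{N_T-1}\mathbb{E}\big[\|u(t_{n+1})-u(t_n)\|_m^{2p}\big] \le C k^{2p-1} + C k^{p-1} \le C k^{p-1},
\]
using $k\le T$ and $p\ge 1$. Dividing by $k^{p-1}$ gives the claim.

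Alternatively, the lemma's bound $Ck^p$ per interval, summed over $N_T=T/k$ intervals, gives $CTk^{p-1}$ directly.
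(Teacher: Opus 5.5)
Your proof is correct. It takes a slightly different route from the paper's.

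The paper treats the corollary as an immediate consequence of Lemma \ref{lem:Holder-High-Moments}. It takes $s=t_n$, $t=t_{n+1}$ to get $\mathbb{E}[\|u(t_{n+1})-u(t_n)\|_m^{2p}]\le Ck^p$ uniformly in $n$, then sums the $N_T=T/k$ terms. That is exactly your closing ``Alternatively'' line.

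Your main summation argument instead keeps the per-interval bounds in integrated form, $Ck^{2p-1}\mathbb{E}\int_{t_n}^{t_{n+1}}(\|w_1\|_{m+1}^{2p}+\|w_2\|_{m+1}^{2p}+\|\psi\|_m^{2p})\,\mathrm{d}\tau + Ck^{p-1}\mathbb{E}\int_{t_n}^{t_{n+1}}\|g\|_{\mathcal{L}_2^m}^{2p}\,\mathrm{d}\tau$. Summing over $n$ collapses these into integrals over $[0,T]$, so you need only the integrability hypotheses the corollary actually lists.

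As a result, the point you flagged as delicate disappears. You never use $u\in L^{2p}(\Omega,L^\infty[0,T;H^{m+1}])$, which the lemma requires but the corollary does not state. You do not even use \eqref{eq:holder_1}.

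The paper's route is shorter and gives the stronger uniform-in-$n$ bound $Ck^p$. Its cost is that extra sup-in-time regularity of $u$, which in the paper's application is supplied by $(\mathcal{A}_q)$. You can therefore drop the assumption in your first paragraph and present the integrated summation argument as the whole proof.
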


\end{subsection}

\end{section}

\begin{section}{High-Moment Stability Estimate}
\label{sec:stability-2d}

% \begin{subsection}{Stability Estimate}
In this section, we show the second- and higher-moment unconditional stability estimates for the numerical solution $u_h$ of the fully discrete numerical scheme \eqref{eq:fully-discrete-1-2d}-\eqref{eq:fully-discrete-4-2d}. 

Before proving the main stability result, we first show that the initial discrete gradients $v_{1,h}^0$ and $v_{2,h}^0$ are well-defined and uniformly bounded. 
% the right-hand-side of \eqref{ineq:high-moment-stability-2D} is deterministic and bounded. 
\begin{lemma}
\label{lem:v_h^0-2d}
    Assume hypothesis {\rm(i)}, and take $u_h^0$ to be a suitable projection of the exact initial condition satisfying the standard approximation \eqref{ineq:proj-property-1-2d}. Then the quantities $v_{1,h}^0$ and $v_{2,h}^0$ computed from \eqref{eq:fully-discrete-2-2d} are deterministic, and satisfy 
    \[\| v_{1,h}^0\| + \| v_{2,h}^0\| < C\| u_0\|_{1}. \] 
\end{lemma}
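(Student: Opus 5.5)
We would prove the lemma by comparing the discrete gradients with the $L^2$ projection of the exact gradient, using the $L^-$ operator and the trace bound of Lemma \ref{lem:proj-property-2d} only at the $H^1$ level.

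\textbf{Determinism.} This is immediate. Since $u_0$ and the projection operator are deterministic, $u_h^0$ is deterministic. The relations \eqref{eq:fully-discrete-2-2d} with $n+1$ replaced by $0$ define $v_{1,h}^0, v_{2,h}^0$ as the unique elements of $\mathbb{V}_h$ whose inner products with every test function equal $L^{x-}(u_h^0,\cdot)$ and $L^{y-}(u_h^0,\cdot)$. The mass matrix is invertible, so they are well defined, linear in $u_h^0$, and involve no randomness.

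\textbf{Reduction to the projection error.} Test \eqref{eq:fully-discrete-2-2d} with $p_h=v_{1,h}^0$ and $q_h=v_{2,h}^0$ and add, giving
\[ \| v_{1,h}^0\|^2+\| v_{2,h}^0\|^2 = L^-(u_h^0,v_{1,h}^0,v_{2,h}^0). \]
The exact $u_0\in H^1$ has no jumps, so its traces are single-valued. Integrating by parts cellwise therefore gives $L^-(u_0,p,q)=(\partial_x u_0,p)+(\partial_y u_0,q)$. Writing $u_h^0=u_0+\eta$ with $\eta=\Pi u_0-u_0$, we get
\[ \| v_{1,h}^0\|^2+\| v_{2,h}^0\|^2 = (\nabla u_0,(v_{1,h}^0,v_{2,h}^0)) + L^-(\eta,v_{1,h}^0,v_{2,h}^0). \]
The first term is bounded by $\|u_0\|_1(\|v_{1,h}^0\|+\|v_{2,h}^0\|)$.

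\textbf{Bounding $L^-(\eta,\cdot,\cdot)$, the main step.} On each cell, $L^-(\eta,p,q)$ consists of volume terms $-\iint\eta\,p_x$ and $-\iint\eta\,q_y$, together with boundary terms carrying the traces $\eta^-$. Lemma \ref{lem:proj-property-2d} is stated for $H^{r+1}$, but for $u_0\in H^1$ we invoke its $H^1$ version (the case $r=0$ of the underlying Bramble--Hilbert argument, which the paper's $\Pi$ satisfies by stability). This yields
\[ \|\eta\|_{L^2(K)}+h^{1/2}\|\eta\|_{L^2(\partial K)}\le Ch\|u_0\|_{H^1(K)}. \]
For the test functions we use the inverse inequalities $\|\nabla p\|_K\le Ch^{-1}\|p\|_K$ and $\|p\|_{\partial K}\le Ch^{-1/2}\|p\|_K$ on the quasi-uniform mesh.

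With these, each volume term is bounded by $Ch\|u_0\|_1\cdot h^{-1}\|p\|$. Each boundary term is bounded by $Ch^{1/2}\|u_0\|_1\cdot h^{-1/2}\|p\|$. Hence
\[ |L^-(\eta,v_{1,h}^0,v_{2,h}^0)|\le C\|u_0\|_1(\|v_{1,h}^0\|+\|v_{2,h}^0\|). \]

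\textbf{Conclusion and the expected obstacle.} Dividing both sides by $\|v_{1,h}^0\|+\|v_{2,h}^0\|$, and using $a^2+b^2\ge(a+b)^2/2$, gives the claim. The only delicate point is having the $H^1$-level projection estimate, since the initial datum has only $H^1$ regularity.

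Should the $H^1$-level bound on $\eta$ be unavailable, the fallback is Lemma \ref{lem:num-flux-2D}. It requires $\|\nabla u_h^0\|+\mu h^{-1/2}\|[u_h^0]\|_{\Gamma_h}\le C\|u_0\|_1$. Here $[u_h^0]=[\eta]$, and the $H^1$ stability of $\Pi$ gives $\|\nabla u_h^0\|\le C\|u_0\|_1$, so the same trace estimate closes the bound.
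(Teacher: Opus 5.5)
Your proof is correct, and your fallback is exactly the paper's argument. The paper starts from $\|v_{1,h}^0\|^2+\|v_{2,h}^0\|^2=L^-(u_h^0,v_{1,h}^0,v_{2,h}^0)$ and applies \eqref{ineq:alternating-flux-2d} directly to the discrete function $u_h^0$. It then reduces to projection bounds via Young's inequality, $\|\nabla u_h^0\|\le\|\nabla u_0\|+\|\nabla(u_0-u_h^0)\|$, and $[u_h^0]=[u_h^0-u_0]$.

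Your main route is genuinely different. You use the consistency of $L^-$ (cellwise integration by parts against the continuous $u_0$, whose edge traces are single-valued) to split off $(\nabla u_0,(v_{1,h}^0,v_{2,h}^0))$. You then bound $L^-(\eta,\cdot,\cdot)$ by shifting derivatives and traces onto the discrete test functions with inverse inequalities. The paper's version is shorter, because Lemma \ref{lem:num-flux-2D} already packages those inverse and trace estimates. Yours uses the same consistency splitting that underlies the error equation \eqref{eq:error-2-2d}, and it makes explicit that the discrete gradient is driven by $\nabla u_0$.

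Both routes need the same input: the $r=0$ instance of \eqref{ineq:proj-property-1-2d}, which Lemma \ref{lem:proj-property-2d} does not literally state, since it is formulated for $H^{r+1}$ data. Your trace bound on $\eta$ already requires $\|\nabla\eta\|_{L^2(K)}\le C\|u_0\|_{H^1(K)}$ through the scaled trace inequality, so you gain nothing there over the paper.

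One correction to your justification. This $H^1$-level bound holds for the $L^2$ projection $\mathcal{P}$, by Poincar\'e's inequality plus a local inverse estimate. It does not hold ``by stability'' for $\mathcal{P}^{\pm}$ in two dimensions. Those projections are not even defined on $H^1$, because they prescribe a point value at a cell corner. So ``suitable projection'' should be read as $\mathcal{P}$, or as any projection assumed to satisfy that bound.
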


\begin{proof}
% Since $u_0$ is deterministic, so is its projection $u_h^0=\mathcal{P}^-u_0$. 
Since $u_0$ is deterministic, so is its projection $u_h^0$. It follows from \eqref{eq:fully-discrete-2-2d} that $v_{1,h}^0$ and $v_{2,h}^0$ are also deterministic. 
Taking the test functions $p_h = v_{1,h}^0$ and $q_h = v_{2,h}^0$ in the fully-discrete scheme \eqref{eq:fully-discrete-2-2d}, we get 
$L^-(u_h^0,v_{1,h}^0,v_{2,h}^0) = \| v_{1,h}^0\|^2 + \| v_{2,h}^0\|^2$. Applying inequality \eqref{ineq:alternating-flux-2d} yields
\begin{align*}
    \| v_{1,h}^0\|^2 + \| v_{2,h}^0\|^2 \leq \left(\| \nabla u_h^0\| + \mu h^{-\frac{1}{2}} \| [u_h^0]\|_{\Gamma_h} \right) \left(\| v_{1,h}^0\| + \| v_{2,h}^0\| \right). 
\end{align*}
Utilizing Young's inequality and Lemma \ref{lem:proj-property-2d}, we have
\begin{align*}
    &\| v_{1,h}^0\|^2 + \| v_{2,h}^0\|^2 \leq 2\left(\| \nabla u_h^0\| + \mu h^{-\frac{1}{2}} \| [u_h^0]\|_{\Gamma_h} \right)^2 \\
    &\leq C \left(\| \nabla u_0\|^2 + \| \nabla (u_0-u_h^0)\|^2 + h^{-1}\| [u_0 - u_h^0]\|_{\Gamma_h}^2 \right) \leq C\| u_0\|_{1}^2. 
\end{align*}
This completes the proof. 
\end{proof}

We have the following unconditional stability result for the fully discrete scheme.
\begin{theorem}
\label{thm:stability-estimate-2D}
Assume hypotheses {\rm(i)}-{\rm(iv)}. For any fixed $q\in[1,\infty)$, suppose 
$$\alpha > \alpha_0 := \left(2^{2q-1}C_b' + 2^{5q-4}C_b \right)^{1/q} C_b^{1/q} D_4^2K.$$ 
Then there exists a constant $C$, independent of $h$ and $k$, such that
\begin{align}
\label{ineq:high-moment-stability-2D}
    % \mathbb{E}\left[ \max_{0\leq n \leq N_T} \| u_h^{n+1}\|^{2q} \right] &+ \mathbb{E}\left[\left(\sum_{n=0}^{N_T}  \| u_h^{n+1} - u_h^n\|^2 \right)^q \right] + \mathbb{E}\left[ \left(k\sum_{n=0}^{N_T} (\| v_{1,h}^{n+1}\|^2 + \| v_{2,h}^{n+1}\|^2) \right)^q \right] \notag \\
    % &\leq C\left(1+\| u_h^0\|^{2q} + k^q \| v_{1,h}^0\|^{2q} + k^q \| v_{2,h}^0\|^{2q}\right), 
    % &\mathbb{E}\left[  \| \{u_h^{n}\}_{0}^{N_T} \|_{\mathcal{S}}^{2q} \right] + \mathbb{E}\left[ \langle u_h^{n} \rangle^{q} \right] + \mathbb{E}\left[  \left(\| \{v_{1,h}^{n}\}_{0}^{N_T} \|_{\ell^2}^{2} + \| \{ v_{2,h}^{n}\}_0^{N_T} \|_{\ell^2}^{2} \right)^q \right] \leq C\left(1+\| u_0\|^{2q}+ k^q\| u_0\|_{H^1}^{2q} \right). \\
    \mathbb{E} \!\left[ \| \mathbf{u_h} \|_{\mathcal{S}}^{2q} \right] + \mathbb{E}\!\left[ \langle \mathbf{u_h} \rangle^{q} \right] + \mathbb{E}\!\left[  \left(\| \mathbf{v_{1,h}} \|_{\ell^2}^{2} + \| \mathbf{v_{2,h}} \|_{\ell^2}^{2} \right)^q \right] 
    \leq C\left(1+\| u_0\|^{2q}+ k^q\| u_0\|_{1}^{2q} \right).
\end{align}
\end{theorem}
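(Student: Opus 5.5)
We first derive a one-step energy identity, then sum it and raise it to the $q$-th power. Take $r_h = u_h^{n+1}$ in \eqref{eq:fully-discrete-1-2d}, $p_h = w_{1,h}^{n+1}$, $q_h = w_{2,h}^{n+1}$ in \eqref{eq:fully-discrete-2-2d}, and $z_h = v_{1,h}^{n+1}$, $\phi_h = v_{2,h}^{n+1}$ in \eqref{eq:fully-discrete-3-2d}--\eqref{eq:fully-discrete-4-2d}, and sum over all cells. Lemma \ref{lem:num-flux-2D} gives
\[
H^+(w_{1,h}^{n+1},w_{2,h}^{n+1},u_h^{n+1}) = -L^-(u_h^{n+1},w_{1,h}^{n+1},w_{2,h}^{n+1}) = -(w_{1,h}^{n+1},v_{1,h}^{n+1})-(w_{2,h}^{n+1},v_{2,h}^{n+1}).
\]
By \eqref{eq:fully-discrete-3-2d}--\eqref{eq:fully-discrete-4-2d} this equals $-(A(t^{n+1},u_h^{n+1})\mathbf v_h^{n+1},\mathbf v_h^{n+1}) \le -\alpha(\|v_{1,h}^{n+1}\|^2+\|v_{2,h}^{n+1}\|^2)$, pointwise in $\omega$, since the Gram-type identity holds without any dependence on $u$. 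The polarization identity $2(a-b,a)=\|a\|^2-\|b\|^2+\|a-b\|^2$ then yields
\[
\tfrac12\|u_h^{n+1}\|^2-\tfrac12\|u_h^n\|^2+\tfrac12\|u_h^{n+1}-u_h^n\|^2+\alpha k\|\mathbf v_h^{n+1}\|^2 \le k(\psi^n,u_h^{n+1})+(g^n\Delta W_n,u_h^{n+1}).
\]
Next split both right-hand terms around $u_h^n$. The drift term is $k(\psi^n,u_h^n)+k(\psi^n,u_h^{n+1}-u_h^n)$, controlled by (iii) with Young's inequality: a small multiple of $\|u_h^{n+1}-u_h^n\|^2$, a term $Ck(1+\|u_h^n\|^2)$, and $\epsilon k\|\mathbf v_h^n\|^2$, where the lagged gradient is absorbed after an index shift (this produces the $k\|\mathbf v_h^0\|^2$ term, handled by Lemma \ref{lem:v_h^0-2d}). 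The noise term is the martingale increment $(g^n\Delta W_n,u_h^n)$ plus $(g^n\Delta W_n,u_h^{n+1}-u_h^n)\le \frac14\|u_h^{n+1}-u_h^n\|^2+\|g^n\Delta W_n\|^2$. Summing from $0$ to $m-1$ gives, pathwise,
\[
\|u_h^m\|^2+\tfrac12\sum\|u_h^{n+1}-u_h^n\|^2+2\alpha k\sum\|\mathbf v_h^{n+1}\|^2 \le \|u_h^0\|^2+ Ck\sum(1+\|u_h^n\|^2)+\epsilon k\sum\|\mathbf v_h^n\|^2+2\sum\|g^n\Delta W_n\|^2+2M_m,
\]
where $M_m=\sum_{n<m}(g^n\Delta W_n,u_h^n)$. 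For $q=1$, taking expectations kills $M_m$ and gives $\mathbb E\|g^n\Delta W_n\|^2 \le kK\,\mathbb E\|g^n\|^2 \le kK(D_3^2(1+\|u_h^n\|^2)+D_4^2\|\mathbf v_h^n\|^2)$, so $2KD_4^2<2\alpha$ closes the estimate via discrete Gronwall. The maximum over $m$ is then recovered with BDG.

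For general $q$, take the maximum over $m$, raise to the power $q$, and use \eqref{ineq:convex-3} to separate the $\|u_h^0\|^2$ and lower-order part (with constant $C^*(\epsilon)$) from the two stochastic pieces (with constant $2^{q-1}+\epsilon$). For $\mathbb E[(\sum\|g^n\Delta W_n\|^2)^q]$, apply the left inequality of Remark \ref{rmk:BDG} followed by its right inequality:
\[
\le C_b'C_b\,\mathbb E\bigl[(kK\sum\|g^n\|^2)^q\bigr].
\]
Then expand $\|g^n\|^2$ with (iv) and again use \eqref{ineq:convex-1}, so that the leading coefficient on $\mathbb E[(k\sum\|\mathbf v_h^n\|^2)^q]$ is essentially $2^{2q-1}C_b'C_bD_4^{2q}K^q$. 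For $\mathbb E[\max|M_m|^q]$, apply BDG: $\le C_b\mathbb E[(kK\sum\|g^n\|^2\|u_h^n\|^2)^{q/2}]\le C_b\mathbb E[\max\|u_h^n\|^q(kK\sum\|g^n\|^2)^{q/2}]$, and use Young's inequality to get $\delta\mathbb E\max\|u_h\|^{2q}$, absorbed on the left, plus $C_\delta\mathbb E[(kK\sum\|g^n\|^2)^q]$. Tracking the powers of $2$ from the repeated convexity splittings yields the second coefficient, $2^{5q-4}C_b^2D_4^{2q}K^q$. The condition $\alpha^q>(2^{2q-1}C_b'+2^{5q-4}C_b)C_bD_4^{2q}K^q$, i.e. $\alpha>\alpha_0$, is exactly what lets the left-hand term $(2\alpha)^q\mathbb E[(k\sum\|\mathbf v_h\|^2)^q]$ dominate, after choosing $\epsilon,\delta$ small.

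The remaining terms, of the form $\mathbb E[(k\sum\|u_h^n\|^2)^q]\le C k\sum\mathbb E\max_{\ell\le n}\|u_h^\ell\|^{2q}$ by Hölder, are closed with a discrete Gronwall inequality applied to $\mathbb E\max_{\ell\le m}(\cdots)$ on growing index ranges. Finally, bound $\|u_h^0\|\le C\|u_0\|$ and $k^q\|\mathbf v_h^0\|^{2q}\le Ck^q\|u_0\|_1^{2q}$ by Lemma \ref{lem:v_h^0-2d}, which gives \eqref{ineq:high-moment-stability-2D}.

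The main obstacle is the high-moment bookkeeping in this last part. The coefficients produced by the repeated applications of Lemma \ref{lem:convex-ineq} and BDG on the gradient-dependent noise must come out exactly at the threshold $\alpha_0$. A secondary difficulty is that the Gronwall argument needs the maximum inside the expectation at every intermediate $m$, so all steps must be carried out on the truncated ranges $\{0,\dots,m\}$.
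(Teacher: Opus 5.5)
Your proposal is essentially the paper's proof. It uses the same test functions and energy identity and the same split of the noise: the increment $(g^n\Delta W_n,u_h^{n+1}-u_h^n)$, whose Young remainder $\sum\|g^n\Delta W_n\|^2$ is handled by the left-then-right inequalities of Remark \ref{rmk:BDG}, and the martingale $(g^n\Delta W_n,u_h^n)$, handled by BDG plus Young with absorption of $\mathbb{E}[\max\|u_h^n\|^{2q}]$; this is followed by Lemma \ref{lem:convex-ineq}, discrete Gr\"onwall and Lemma \ref{lem:v_h^0-2d}, while your factor-$2$ rescaling and splitting of the drift around $u_h^n$ are cosmetic. The one step to state correctly is that passing from the inequality at each fixed $m$ to $\max_m\|u_h^m\|^2$ plus the full sums costs a factor $2$ on the right-hand side (as in the paper's step to \eqref{ineq:stability-main-2D}); only with that factor do your gradient coefficients, measured against $(2\alpha)^q$, reduce up to arbitrarily small slack to $2^{3q-1}C_b'C_bK^qD_4^{2q}$ and $2^{6q-4}C_b^2K^qD_4^{2q}$, whose sum is exactly $2^q\alpha_0^q$, so the threshold is indeed $\alpha>\alpha_0$.
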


\begin{proof}
The proof is carried out in the following two steps.

\noindent {\it Step 1.} Choose the test functions $r_h = u_h^{n+1}$, $p_h = w_{1,h}^{n+1}$, $q_h = w_{2,h}^{n+1}$ and $z_h = v_{1,h}^{n+1}$, $\phi_h = v_{2,h}^{n+1}$ in the fully discrete scheme \eqref{eq:fully-discrete-1-2d}-\eqref{eq:fully-discrete-4-2d}. Let $A(\cdot,t_{n+1},u_h^{n+1}) = \{ a_{11}, a_{12}; a_{21}, a_{22}\}$ be the $ 2\times2$ coefficient matrix. After summing over all computational cells and applying Lemma \ref{lem:num-flux-2D}, we have
\begin{align}
\label{eq:fully-discrete-5-2d}
%    (v_{1,h}^{n+1},v_{2,h}^{n+1})A(x,y,t,u_h^{n+1})(v_{1,h}^{n+1},v_{2,h}^{n+1})^T  = L^-(u_h^{n+1},w_{1,h}^{n+1},w_{2,h}^{n+1}) = -H^+(w_{1,h}^{n+1},w_{2,h}^{n+1},u_h^{n+1}).
\bigl(A(\cdot,t_{n+1},u_h^{n+1})(v_{1,h}^{n+1},v_{2,h}^{n+1})^{\top},\,(v_{1,h}^{n+1},v_{2,h}^{n+1})^{\top}\bigr)
=
L^-(u_h^{n+1},w_{1,h}^{n+1},w_{2,h}^{n+1})
=
-H^+(w_{1,h}^{n+1},w_{2,h}^{n+1},u_h^{n+1}).
\end{align}
Adding Eq. \eqref{eq:fully-discrete-5-2d} to Eq. \eqref{eq:fully-discrete-1-2d}, we get
\begin{align}
\label{eq:stability-main-2d}
    &(u_h^{n+1} - u_h^n,u_h^{n+1}) + k\bigl(A(\cdot,t_{n+1},u_h^{n+1})(v_{1,h}^{n+1},v_{2,h}^{n+1})^{\top},\,(v_{1,h}^{n+1},v_{2,h}^{n+1})^{\top}\bigr) \notag \\
    &\qquad = k(\psi(\cdot,t_n,u_h^n,v_{1,h}^n,v_{2,h}^n),u_h^{n+1}) 
    + (g(\cdot,t_n,u_h^n,v_{1,h}^n,v_{2,h}^n) \Delta W_n, u_h^{n+1}). 
\end{align}
By hypothesis {\rm(ii)}, we have for the left-hand side of \eqref{eq:stability-main-2d},
\begin{align}
\label{ineq:LHS-bound-2d}
    LHS \geq \frac{1}{2}\| u_h^{n+1}\|^2 - \frac{1}{2} \|u_h^n\|^2 + \frac{1}{2}  \| u_h^{n+1} - u_h^n\|^2 
    + \alpha k \left(\| v_{1,h}^{n+1}\|^2 +\| v_{2,h}^{n+1}\|^2 \right).
\end{align}

Fix $0\le m\le N_T-1$. Summing \eqref{eq:stability-main-2d} from $n=0$ to $l$, taking the maximum over $0\le l\le m$, and using \eqref{ineq:LHS-bound-2d}, we obtain
\begin{align*}
    &\max_{0 \leq l \leq m} \left\{ \frac12 \|u_h^{l+1}\|^2 - \frac12\|u_h^0\|^2, \,
    \frac{1}{2} \sum_{n=0}^{l} \| u_h^{n+1} - u_h^n\|^2 
    + \alpha k \sum_{n=0}^{l} \left(\| v_{1,h}^{n+1}\|^2 +\| v_{2,h}^{n+1}\|^2 \right) \right\} \notag \\
    &\qquad \leq I_1+ I_2 + I_3,
\end{align*}
where
\begin{align*}
I_1&:=k\max_{0\le l\le m}\left|\sum_{n=0}^{l}(\psi(\cdot,t_n,u_h^n,v_{1,h}^n,v_{2,h}^n),u_h^{n+1})\right|,\\
I_2&:=\max_{0\le l\le m}\left|\sum_{n=0}^{l}(g(\cdot,t_n,u_h^n,v_{1,h}^n,v_{2,h}^n)\Delta W_n,u_h^{n+1}-u_h^n)\right|,\\
I_3&:=\max_{0\le l\le m}\left|\sum_{n=0}^{l}(g(\cdot,t_n,u_h^n,v_{1,h}^n,v_{2,h}^n)\Delta W_n,u_h^n)\right|.
\end{align*}
Then we can bound the sum of the $\max_{0 \leq l \leq m}$ terms by 
\begin{align}
\label{ineq:stability-main-2D}
&\frac12\max_{0\le l\le m}\|u_h^{l+1}\|^2
+\frac12\sum_{n=0}^{m}\|u_h^{n+1}-u_h^n\|^2
+\alpha k\sum_{n=0}^{m}\bigl(\|v_{1,h}^{n+1}\|^2+\|v_{2,h}^{n+1}\|^2\bigr)
\notag\\
&\qquad
\le
\frac12\|u_h^0\|^2 + 2(I_1 + I_2 + I_3),
\end{align}

Let $\epsilon_i>0$, $i=1,\dots,6$, be arbitrary. By hypothesis {\rm(iii)}, Young's inequality, and the Cauchy-Schwarz inequality,
\begin{align*}
2I_1
&\le
\frac{k}{\epsilon_1}\sum_{n=0}^{m}\|u_h^{n+1}\|^2
+\epsilon_1 k\sum_{n=0}^{m}\|\psi(\cdot,t_n,u_h^n,v_{1,h}^n,v_{2,h}^n)\|^2 \\
&\le
Ck(1 + \| u_h^{0}\|^{2} + \| v_{1,h}^{0}\|^{2} + \| v_{2,h}^{0}\|^{2}) + Ck\sum_{n=0}^{m}\|u_h^{n+1}\|^2
+\epsilon_1 B_3^2 k\sum_{n=0}^{m}\bigl(\|v_{1,h}^{n+1}\|^2+\|v_{2,h}^{n+1}\|^2\bigr), \\
2I_2
&\le
\frac{1}{\epsilon_2}\sum_{n=0}^{m}\|u_h^{n+1}-u_h^n\|^2
+\epsilon_2\sum_{n=0}^{m}\|g(\cdot,t_n,u_h^n,v_{1,h}^n,v_{2,h}^n)\Delta W_n\|^2.
\end{align*}
Substituting these bounds into \eqref{ineq:stability-main-2D} leads to
\begin{align*}
&\frac12\max_{0\le l\le m}\|u_h^{l+1}\|^2
+\Bigl(\frac12-\frac{1}{\epsilon_2}\Bigr)\sum_{n=0}^{m}\|u_h^{n+1}-u_h^n\|^2
+\bigl(\alpha-\epsilon_1B_3^2\bigr)k\sum_{n=0}^{m}\bigl(\|v_{1,h}^{n+1}\|^2+\|v_{2,h}^{n+1}\|^2\bigr)
\notag\\
&\le
C\bigl(1+\|u_h^0\|^2+k\|v_{1,h}^0\|^2+k\|v_{2,h}^0\|^2\bigr)
+Ck\sum_{n=0}^{m}\|u_h^{n+1}\|^2
\notag\\
&\qquad
+\epsilon_2\sum_{n=0}^{m}\|g(\cdot,t_n,u_h^n,v_{1,h}^n,v_{2,h}^n)\Delta W_n\|^2
+2I_3.
\end{align*}
Taking the $q$-th power, applying Lemma \ref{lem:convex-ineq}, and then taking the expectation yield
\begin{align}
\label{ineq:stability-main-expect-2D}
    \frac{1}{2^q} & \mathbb{E}\left[\max_{0 \leq l \leq m} \| u_h^{l+1}\|^{2q} \right] + \left(\frac{1}{2} - \frac{1}{\epsilon_2}\right)^q \mathbb{E}\left[ \left(\sum_{n=0}^{m}  \| u_h^{n+1} - u_h^n\|^2 \right)^q \right] \notag \\
    & \quad + \left(\alpha - \epsilon_1 B_3^2 \right)^q \mathbb{E}\left[ \left(k\sum_{n=0}^{m}  \left(\| v_{1,h}^{n+1}\|^2 +\| v_{2,h}^{n+1}\|^2 \right) \right)^q \right] \notag \\
    & \leq C(1 + \| u_h^{0}\|^{2q} + k^q \| v_{1,h}^{0}\|^{2q} + k^q \| v_{2,h}^{0}\|^{2q}) + Ck\sum_{n=0}^{m} \mathbb{E}\left[ \max_{0 \leq l \leq n} \| u_h^{l+1}\|^{2q} \right]  \notag \\
    &\quad + (2^{q-1}+\epsilon_3)\epsilon_2^q \mathbb{E}\left[ \left(\sum_{n=0}^{m}  \| g(\cdot,t_n,u_h^n,v_{1,h}^n,v_{2,h}^n) \Delta W_n\|^2 \right)^q \right] \notag \\
    & \quad + (2^{q-1}+\epsilon_3) 2^q\mathbb{E}\left[ \max_{0 \leq l \leq m} \left| \sum_{n=0}^{l}  (g(\cdot,t_n,u_h^n,v_{1,h}^n,v_{2,h}^n) \Delta W_n,u_h^{n}) \right|^q \right].
\end{align}
% where we use Eq. \eqref{ineq:convex-3} with $\epsilon = \epsilon_3$, and the discrete H\"older inequality.

\noindent {\it Step 2.} Next, we proceed to estimate the last two terms of the right-hand side of \eqref{ineq:stability-main-expect-2D}. By the discrete and continuous BDG (see Remark \ref{rmk:BDG}), we have
\begin{align}
\label{ineq:stability-I1-2D}
    %\mathcal{I}_1 &:= 
    \mathbb{E}&\left[\left(\sum_{n=0}^{m} \| g(\cdot,t_n,u_h^n,v_{1,h}^n,v_{2,h}^n) \Delta W_n\|^2 \right)^q \right] \leq C_{b}'\mathbb{E}\left[ \max_{0 \leq l \leq m} \left\| \sum_{n=0}^{l} g(\cdot,t_n,u_h^n,v_{1,h}^n,v_{2,h}^n) \Delta W_n \right\|^{2q} \right]  \notag \\
    &\hspace{2cm}
    \leq C_b'C_b K^q \mathbb{E}\left[\left( k\sum_{n=0}^{m} \| g(\cdot,t_n,u_h^n,v_{1,h}^n,v_{2,h}^n)\|^2 \right)^q \right] \notag \\
    % &\hspace{1cm}
    % \leq C_b^2 K^q \mathbb{E}\left[ \left( C+ k\sum_{n=0}^{m} C_3^2\| u_h^n\|^2 + kC_4^2 \sum_{n=0}^{m} (\| v_{1,h}^n\|^2 + \| v_{2,h}^n\|^2) \right)^q \right] \notag \\
    &\hspace{2cm}
    \leq C(1 + \| u_h^{0}\|^{2q} + k^q \| v_{1,h}^{0}\|^{2q} + k^q \| v_{2,h}^{0}\|^{2q}) 
    + Ck\sum_{n=0}^{m}\mathbb{E}\left[ \| u_h^{n+1}\|^{2q} \right] \notag \\
    &\hspace{2.6cm}
    + C_b'C_{b} K^qD_4^{2q}(1+\epsilon_4) \mathbb{E}\left[\left(k\sum_{n=0}^{m}  \left(\| v_{1,h}^{n+1}\|^2 +\| v_{2,h}^{n+1}\|^2 \right) \right)^q \right], 
\end{align}
where the last inequality follows from hypothesis {\rm(iv)}, the discrete H\"older inequality and Lemma \ref{lem:convex-ineq}. By the BDG inequality, the Cauchy-Schwarz inequality, discrete H\"older inequality, hypothesis {\rm(iv)} and Lemma \ref{lem:convex-ineq},
\begin{align}
\label{ineq:stability-I2-2D}
\mathbb{E}[I_3^q]
&\le
C_b\mathbb{E}\left[\left(kK\sum_{n=0}^{m}\|g(\cdot,t_n,u_h^n,v_{1,h}^n,v_{2,h}^n)\|^2\|u_h^n\|^2\right)^{q/2}\right]
\notag\\
&\leq C_b \mathbb{E}\left[ \max_{0 \leq n \leq m}  \|u_h^{n}\|^{q} \left( kK\sum_{n=0}^{m}  \|g(\cdot,t_n,u_h^n,v_{1,h}^n,v_{2,h}^n) \|^2 \right)^{q/2} \right] \notag \\
&\le
\frac{1}{4\epsilon_5}\mathbb{E}\left[\max_{0\le n\le m}\|u_h^n\|^{2q}\right]
+\epsilon_5C_b^2K^q\mathbb{E}\left[\left(k\sum_{n=0}^{m}\|g(\cdot,t_n,u_h^n,v_{1,h}^n,v_{2,h}^n)\|^2\right)^q\right]
\notag\\
&\le
C\bigl(1+\|u_h^0\|^{2q}+k^q\|v_{1,h}^0\|^{2q}+k^q\|v_{2,h}^0\|^{2q}\bigr)
\notag\\
&\qquad
+Ck\sum_{n=0}^{m}\mathbb{E}\left[\|u_h^{n+1}\|^{2q}\right]
+\frac{1}{4\epsilon_5}\mathbb{E}\left[\max_{0\le n\le m}\|u_h^{n+1}\|^{2q}\right]
\notag\\
&\qquad
+\epsilon_5C_b^2K^qD_4^{2q}(1+\epsilon_6)
\mathbb{E}\left[\left(k\sum_{n=0}^{m}\bigl(\|v_{1,h}^{n+1}\|^2+\|v_{2,h}^{n+1}\|^2\bigr)\right)^q\right].
\end{align}
Substituting \eqref{ineq:stability-I1-2D} and \eqref{ineq:stability-I2-2D} into \eqref{ineq:stability-main-expect-2D}, we arrive at
\begin{align}
\label{ineq:high-moment-stability-beforefinal-2d}
\mathcal{C}_1&\mathbb{E}\left[\max_{0\le l\le m}\|u_h^{l+1}\|^{2q}\right]
+\mathcal{C}_2\mathbb{E}\left[\left(\sum_{n=0}^{m}\|u_h^{n+1}-u_h^n\|^2\right)^q\right]
+\mathcal{C}_3\mathbb{E}\left[\left(k\sum_{n=0}^{m}\bigl(\|v_{1,h}^{n+1}\|^2+\|v_{2,h}^{n+1}\|^2\bigr)\right)^q\right]
\notag\\
&\le
C\bigl(1+\|u_h^0\|^{2q}+k^q\|v_{1,h}^0\|^{2q}+k^q\|v_{2,h}^0\|^{2q}\bigr)
+Ck\sum_{n=0}^{m}\mathbb{E}\left[\max_{0\le l\le n}\|u_h^{l+1}\|^{2q}\right],
\end{align}
where these three coefficients are given by
\begin{align*}
\mathcal{C}_1
&=\frac{1}{2^q}-\frac{2^q(2^{q-1}+\epsilon_3)}{4\epsilon_5}, \qquad 
\mathcal{C}_2=\Bigl(\frac12-\frac{1}{\epsilon_2}\Bigr)^q,\\
\mathcal{C}_3
&=\bigl(\alpha-\epsilon_1B_3^2\bigr)^q
-(1+\epsilon_4)(2^{q-1}+\epsilon_3)\epsilon_2^q C_b'C_bK^qD_4^{2q}
-(1+\epsilon_6)(2^{q-1}+\epsilon_3)2^q\epsilon_5 C_b^2K^q D_4^{2q}.
\end{align*}
As shown in Appendix \ref{appendix-eps-choice}, the condition $\alpha>\alpha_0$ guarantees that one can choose fixed $\{\epsilon_i\}_{i=1}^6$ such that $\mathcal{C}_1$, $\mathcal{C}_2$, and $\mathcal{C}_3$ are all positive. Applying the discrete Gr\"onwall inequality to \eqref{ineq:high-moment-stability-beforefinal-2d}, then using Lemma \ref{lem:v_h^0-2d} and taking $m=N_T-1$, we obtain \eqref{ineq:high-moment-stability-2D}, which finishes the proof.
\end{proof}

In the following corollary, we show that the lower bound on $\alpha$ can be relaxed in the second-moment case. 
\begin{corollary}[Range of $\alpha$ for Second-Moment Stability Estimate]
\label{coro:stability-2nd-moment-2d}
If we only consider the case $q = 1$ in Theorem \ref{thm:stability-estimate-2D}, we can relax the stochastic parabolicity condition to $\alpha > KD_4^2/2$, which is sharp. More precisely, the second-moment estimate
\begin{align}
\label{ineq:2nd-moment-stability-strong-2d}
   \mathbb{E}\left[  \| \mathbf{u_h} \|_{\mathcal{S}}^{2} \right] + \mathbb{E}\left[ \langle \mathbf{u_h} \rangle \right] + \mathbb{E}\left[ \| \mathbf{v_{1,h}} \|_{\ell^2}^{2} + \| \mathbf{v_{2,h}} \|_{\ell^2}^{2} \right] \leq C\left(1+\| u_0\|^{2}+ k\| u_0\|_{1}^{2} \right).
    % \mathbb{E}\left[ \max_{0\leq n \leq N_T} \| u_h^{n+1}\|^{2} \right] &+ \sum_{n=0}^{N_T} \mathbb{E}\left[\| u_h^{n+1} - u_h^n\|^2 \right] + k\sum_{n=0}^{N_T}\mathbb{E}\left[ \| v_{1,h}^{n+1}\|^2 + \| v_{2,h}^{n+1}\|^2 \right] \notag \\
    % &\leq C\left(1+\| u_h^0\|^{2} + k \| v_{1,h}^0\|^{2} + k\| v_{2,h}^0\|^{2}\right),
\end{align}
holds for any $\alpha > KD_4^2/2$. 
\end{corollary}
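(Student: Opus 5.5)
For $q=1$ I would not take $q$-th powers or use BDG for the quadratic-variation term. Instead I would exploit the martingale structure through expectations directly. I start from the energy identity \eqref{eq:stability-main-2d} with the lower bound \eqref{ineq:LHS-bound-2d}. The key step is to split the noise term as
\[
(g^n\Delta W_n,u_h^{n+1}) = (g^n\Delta W_n,u_h^{n+1}-u_h^n) + (g^n\Delta W_n,u_h^n),
\qquad g^n:=g(\cdot,t_n,u_h^n,v_{1,h}^n,v_{2,h}^n).
\]
The second piece has zero expectation, since $u_h^n$ and $g^n$ are $\mathbb{F}_{t_n}$-measurable and $\Delta W_n$ is independent of $\mathbb{F}_{t_n}$. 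For the first piece I use Young's inequality with the sharp weight:
\[
(g^n\Delta W_n,u_h^{n+1}-u_h^n)\le \tfrac12\|u_h^{n+1}-u_h^n\|^2+\tfrac12\|g^n\Delta W_n\|^2 .
\]
The increment term is exactly absorbed by the numerical dissipation $\frac12\|u_h^{n+1}-u_h^n\|^2$ on the left. (To keep $\mathbb{E}\langle\mathbf{u_h}\rangle$ in the estimate, I use weight $\frac{1}{2}(1-\delta)^{-1}\|g^n\Delta W_n\|^2$ with $\delta>0$ small instead.) By the Itô isometry,
\[
\mathbb{E}\|g^n\Delta W_n\|^2=k\,\mathbb{E}\sum_m\gamma_m\|g^ne_m\|^2\le kK\,\mathbb{E}\|g^n\|^2 .
\]

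Next I insert hypothesis (iv), $\|g^n\|^2\le D_3^2(1+\|u_h^n\|^2)+D_4^2(\|v_{1,h}^n\|^2+\|v_{2,h}^n\|^2)$. The drift term is handled with (iii) and Young's inequality, $k(\psi^n,u_h^{n+1})\le \epsilon k\|\psi^n\|^2+Ck\|u_h^{n+1}\|^2$, where $\epsilon$ is small enough that $\epsilon B_3^2$ costs only an arbitrarily small part of $\alpha$. Summing over $n=0,\dots,m$ and taking expectations, the gradient terms with index $n$ on the right shift to index $n+1$ on the left, plus initial contributions $k(\|v_{1,h}^0\|^2+\|v_{2,h}^0\|^2)$ that Lemma \ref{lem:v_h^0-2d} bounds by $Ck\|u_0\|_1^2$. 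This gives
\[
\tfrac12\mathbb{E}\|u_h^{m+1}\|^2+\tfrac{\delta}{2}\textstyle\sum_n\mathbb{E}\|u_h^{n+1}-u_h^n\|^2+\Bigl(\alpha-\epsilon B_3^2-\tfrac{KD_4^2}{2(1-\delta)}\Bigr)k\sum_n\mathbb{E}(\|v_{1,h}^{n+1}\|^2+\|v_{2,h}^{n+1}\|^2)
\]
\[
\le C(1+\|u_0\|^2+k\|u_0\|_1^2)+Ck\textstyle\sum_n\mathbb{E}\|u_h^{n+1}\|^2 .
\]
Since $\alpha>KD_4^2/2$, I can choose $\delta,\epsilon$ so that the coefficient is positive. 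The discrete Grönwall inequality (for $k$ small) then yields bounds on $\max_n\mathbb{E}\|u_h^n\|^2$, on $\mathbb{E}\langle\mathbf{u_h}\rangle$, and on the $\ell^2$ gradient norms.

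The main obstacle is upgrading to $\mathbb{E}\max_n\|u_h^n\|^2$ without losing the sharp constant. Once the gradient sum and the increment sum are already controlled in expectation, I take the max before expectation in the summed identity. Only the martingale term $\max_l|\sum_{n\le l}(g^n\Delta W_n,u_h^n)|$ needs BDG. It is bounded by
\[
C_b\,\mathbb{E}\Bigl(kK\textstyle\sum_n\|g^n\|^2\|u_h^n\|^2\Bigr)^{1/2}\le \tfrac14\mathbb{E}\max_n\|u_h^n\|^2+C\,\mathbb{E}\,kK\textstyle\sum_n\|g^n\|^2 .
\]
The last term is finite by the previous step, because the gradients are already controlled, so no smallness condition on $\alpha$ arises here. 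The remaining terms are bounded as before, using pathwise Young's inequality on $(g^n\Delta W_n,u_h^{n+1}-u_h^n)$ together with the bound on $\mathbb{E}\|g^n\Delta W_n\|^2$. This yields \eqref{ineq:2nd-moment-stability-strong-2d}.

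For sharpness I would argue that $KD_4^2/2$ is the classical stochastic parabolicity threshold. Take, for example, $g=\sigma u_x$ with noise that is constant in space. Then $\mathbb{E}\|u\|^2$ evolves as $\frac{d}{dt}\mathbb{E}\|u\|^2 = -(2\alpha-\sigma^2)\mathbb{E}\|u_x\|^2$, so for $\alpha<\sigma^2/2$ the problem is ill-posed, and the condition cannot be improved.
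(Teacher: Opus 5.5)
Your proposal is correct and follows essentially the same route as the paper. It first derives the weaker estimate by taking expectations before the maximum, so that $(g^n\Delta W_n,u_h^n)$ has zero mean, and then uses Young's inequality with weight just above $\tfrac12$ together with the It\^o isometry to reach the threshold $KD_4^2/2$. It then upgrades to $\mathbb{E}[\|\mathbf{u_h}\|_{\mathcal S}^2]$ using BDG; the gradient contribution there is already bounded by the weak estimate, so no further restriction on $\alpha$ arises. Your sharpness argument via the continuous problem with $g=\sigma u_x$ is an addition. The paper only asserts sharpness, with numerical support in Section~\ref{sec:numerical-test}. Note that your argument is a heuristic about ill-posedness of the continuous equation, not a proof that uniform bounds fail for the discrete scheme.
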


\begin{proof}
The key idea to obtain the relaxed bound on $\alpha$ is to first show the weaker stability estimate
\begin{align}
\label{ineq:stability-2nd-moment-weaker-2d}
    \max_{0\leq n \leq N_T-1} \mathbb{E}\left[ \| u_h^{n+1}\|^2 \right] + \mathbb{E}\left[ \langle \mathbf{u_h} \rangle \right] + \mathbb{E}\left[ \| \mathbf{v_{1,h}} \|_{\ell^2}^{2} + \| \mathbf{v_{2,h}} \|_{\ell^2}^{2} \right] \leq C\left(1+\| u_0\|^{2}+ k\| u_0\|_{1}^{2} \right).
    % \max_{0\leq n \leq N_T} \mathbb{E}\left[ \| u_h^{n+1}\|^2 \right] &+ \sum_{n=0}^{N_T} \mathbb{E}\left[  \| u_h^{n+1} - u_h^n\|^2 \right] + k\sum_{n=0}^{N_T} \mathbb{E}\left[  \| v_{1,h}^{n+1}\|^2 + \| v_{2,h}^{n+1}\|^2 \right] \notag \\
    % &\leq C(1+\| u_h^0\|^2 + k\| v_{1,h}^0\|^2 + k\| v_{2,h}^0\|^2).
\end{align}
Starting from \eqref{eq:stability-main-2d} and the left-hand side bound \eqref{ineq:LHS-bound-2d}, for $m=1,\ldots,N_T-1$, we sum from $n=0$ to $m$, and then take the expectation to get
\begin{align}
\label{ineq:2nd-moment-stability-expect-2D}
    \frac{1}{2} &\mathbb{E}\left[ \| u_h^{m+1}\|^2 \right]+ \frac{1}{2} \sum_{n=0}^{m} \mathbb{E}\left[ \| u_h^{n+1} - u_h^n\|^2 \right] + \alpha k \sum_{n=0}^{m} \mathbb{E}\left[ \| v_{1,h}^{n+1}\|^2 + \| v_{2,h}^{n+1}\|^2 \right] -\frac{1}{2}\| u_h^{0}\|^2  \notag \\
    &\leq k \sum_{n=0}^{m} \mathbb{E}\left[ \left(\psi(\cdot,t_n,u_h^n,v_{1,h}^n,v_{2,h}^n), u_h^{n+1} \right) \right] + \sum_{n=0}^{m} \mathbb{E}\left[ \left(g(\cdot,t_n,u_h^n,v_{1,h}^n,v_{2,h}^n) \Delta W_n,u_h^{n+1}-u_h^n \right) \right]  \\
    &:= T_1 + T_2, \notag
\end{align}
where $\sum_{n=0}^{m} \mathbb{E}\left[ \left(g(\cdot,t_n,u_h^n,v_{1,h}^n,v_{2,h}^n) \Delta W_n,u_h^{n} \right)\right] = 0$ by the martingale property of the It\^o integral. Similar to the estimate of $I_1,I_2$ in the proof of Theorem \ref{thm:stability-estimate-2D}, we have
\begin{align*}
    T_1 &\leq C(1+\| u_h^0\|^2 + k\| v_{1,h}^0\|^2 + k\| v_{2,h}^0\|^2) + Ck\sum_{n=0}^{m} \mathbb{E}\left[ \| u_h^{n+1}\|^2 \right] + \epsilon_1 B_3^2 k \sum_{n=0}^{m} \mathbb{E}\left[ \| v_{1,h}^{n+1}\|^2 + \| v_{2,h}^{n+1}\|^2 \right], \\
    T_2 &\leq \frac{1}{4\epsilon_2} \sum_{n=0}^{m} \mathbb{E}\left[ \| u_h^{n+1} - u_h^n\|^2 \right] + \epsilon_2\sum_{n=0}^{m} \mathbb{E}\left[ \| g(\cdot,t_n,u_h^n,v_{1,h}^n,v_{2,h}^n) \Delta W_n\|^2 \right] \\
    &\leq \frac{1}{4\epsilon_2} \sum_{n=0}^{m} \mathbb{E}\left[ \| u_h^{n+1} - u_h^n\|^2 \right] + \epsilon_2 Kk \sum_{n=0}^{m} \mathbb{E}\left[ \| g(\cdot,t_n,u_h^n,v_{1,h}^n,v_{2,h}^n)\|^2 \right] \\
    &\leq \frac{1}{4\epsilon_2} \sum_{n=0}^{m} \mathbb{E}\left[ \| u_h^{n+1} - u_h^n\|^2 \right] +
    Ck\sum_{n=0}^{m} \mathbb{E}\left[ \| u_h^{n+1}\|^2 \right] + \epsilon_2 KD_4^2 k\sum_{n=0}^{m} \mathbb{E}\left[ \| v_{1,h}^{n+1}\|^2 + \| v_{2,h}^{n+1}\|^2 \right] \\
    &\quad + C(1+\| u_h^0\|^2 + k\| v_{1,h}^0\|^2 + k\| v_{2,h}^0\|^2),
\end{align*}
where the estimate for $T_2$ follows from the Cauchy-Schwarz inequality, It\^o isometry and hypothesis {\rm(iv)}. Utilizing the above estimates in \eqref{ineq:2nd-moment-stability-expect-2D}, we get
\begin{align*}
    \frac{1}{2} \mathbb{E}&\left[ \| u_h^{m+1}\|^2 \right]+ \left(\frac{1}{2} - \frac{1}{4\epsilon_2} \right) \sum_{n=0}^{m} \mathbb{E}\left[ \| u_h^{n+1} - u_h^n\|^2 \right] + \left(\alpha-\epsilon_1 B_3^2 - \epsilon_2 KD_4^2 \right) k \sum_{n=0}^{m} \mathbb{E}\left[ \| v_{1,h}^{n+1}\|^2 + \| v_{2,h}^{n+1}\|^2 \right] \\
    &\leq C(1+\| u_h^0\|^2 + k\| v_{1,h}^0\|^2 + k\| v_{2,h}^0\|^2) + Ck\sum_{n=0}^{m} \mathbb{E}\left[ \| u_h^{n+1}\|^2 \right]. 
\end{align*}
Choose $\epsilon_2>1/2$ and $\epsilon_1>0$ sufficiently small. Since $\alpha>KD_4^2/2$, the coefficient $\alpha-\epsilon_1 B_3^2 - \epsilon_2 KD_4^2$ can be made positive. Applying the discrete Gr\"onwall inequality, and then taking $\max_{0 \leq m \leq N_T-1}$ yield \eqref{ineq:stability-2nd-moment-weaker-2d}. In particular, we have 
\[ k\sum_{n=0}^{N_T-1} \mathbb{E}\left[  \| v_{1,h}^{n+1}\|^2 + \| v_{2,h}^{n+1}\|^2 \right] 
    \leq C(1+\| u_h^0\|^2 + k\| v_{1,h}^0\|^2 + k\| v_{2,h}^0\|^2). \]
    
We now return to \eqref{ineq:high-moment-stability-beforefinal-2d} with $q=1$.
Even though the coefficient of the term involving
\[
k\sum_{n=0}^{m} \left(\| v_{1,h}^{n+1}\|^2 + \| v_{2,h}^{n+1}\|^2 \right)
\]
need not be positive under the weaker condition $\alpha>KD_4^2/2$, that term is already controlled by \eqref{ineq:stability-2nd-moment-weaker-2d}. Hence it can be absorbed into the right-hand side, and we obtain
\begin{align*}
    &\mathbb{E}\left[\max_{0 \leq l \leq m} \| u_h^{l+1}\|^{2} \right]
    + \mathbb{E}\left[\sum_{n=0}^{m}  \| u_h^{n+1} - u_h^n\|^2 \right] \\
    &\qquad \leq C(1 + \| u_h^{0}\|^{2} + k\| v_{1,h}^0\|^2 + k\| v_{2,h}^0\|^2)
    + Ck\sum_{n=0}^{m}\mathbb{E}\left[ \max_{0 \leq l \leq n} \| u_h^{l+1}\|^{2} \right].
\end{align*}
A second application of the discrete Gr\"onwall inequality, together with Lemma \ref{lem:v_h^0-2d}, gives \eqref{ineq:2nd-moment-stability-strong-2d}.
\end{proof}

\begin{remark}
\label{rmk:stability-degen-2d}
    As a special case, suppose that the model \eqref{spde:conv-diff-2d} is driven by the standard Brownian motion, so that $K=1$. If $B_3 = 0$, the second-moment stability estimate \eqref{ineq:2nd-moment-stability-strong-2d} holds in the degenerate case $\alpha =D_4^2/2$. 
\end{remark}

\end{section}

\begin{section}{Optimal Error Estimate of the Fully-Discrete Method}
\label{sec:error-estimate-2d}
In this section, we show the high-moment convergence estimates for the fully-discrete numerical scheme \eqref{eq:fully-discrete-1-2d}-\eqref{eq:fully-discrete-4-2d} applied to the strong solution with sufficient regularity. The strong convergence orders in space and time are proved to be the theoretical optimal rates $\mathcal{O}(h^{r+1})$ and $\mathcal{O}(k^{\frac{1}{2}})$, respectively. 

Let $u$ be the variational strong solution of \eqref{spde:conv-diff-2d}, and let $v_1, v_2, w_1, w_2$ be defined as in \eqref{eq:auxiliary}. Let $u^n_h$ be the numerical solution of the LDG-IMEX-Euler method \eqref{eq:fully-discrete-1-2d}--\eqref{eq:fully-discrete-4-2d}. We denote by $e_u^n, e_{v_i}^n, e_{w_i}^n$ ($i=1,2$) the error between the exact and numerical solutions at time $t_n$, and $\mathbf{e_{u}}=\{ e_u^n \}^{N_T}_{n=0}$, $\mathbf{e_{v_i}}=\{ e_{v_i}^n \}^{N_T}_{n=0}$ the discrete processes of the numerical error. In addition to hypotheses {\rm(i)}-{\rm(iv)}, we make the following assumptions on the leading matrix $A=\{a_{ij}\}$ and the stochastic diffusion term $g$. 
%, where hypothesis (v) follows from the hypothesis in Lemma \ref{lem:Holder-High-Moments}, and $\{a_{ij}\}$ may depend on the variables $\{ x,y,t\}$ as opposed to being constant in \cite{LiShuTang2021_ESAIM}.
\begin{enumerate}
    \item[(v)] For any $(\omega,t) \in \Omega \times [0,T]$, 
    \begin{align*}
         \| g(\omega,\cdot,t,u,v_1,v_2)\|_{\mathcal{L}_2^2} \leq C\| g(\omega,\cdot,t,u,v_1,v_2)\|_{2} \leq C\left(1 + \| u\|_{2} + \| v_1\|_{2} + \| v_2\|_{2} \right).
    \end{align*}

    \item[(vi)] The leading coefficient matrix $A = \{a_{ij}\}$ is independent of $u$. Moreover, for any $(\omega,x,y,t,s) \in \Omega \times [0, 2\pi]^2 \times [0,T]^2$, and $j=1,2$, we have
    \begin{align*}
        &|(a_{ij})_x(x,y,t)| + |(a_{ij})_y(x,y,t)|\leq C, \qquad \left|a_{ij}(x,y,t) - a_{ij}(x,y,s) \right| \leq C|t-s|^{1/2}, \\
        &\left|(a_{ij})_x(x,y,t) - (a_{ij})_x(x,y,s) \right| + \left|(a_{ij})_y(x,y,t) - (a_{ij})_y(x,y,s) \right| \leq C|t-s|^{1/2}.
    \end{align*}
\end{enumerate}
Assumption (v) is the growth condition used in Lemma \ref{lem:Holder-High-Moments}, while assumption (vi) specifies the semilinear setting considered in the error analysis. In particular, unlike \cite{LiShuTang2021_ESAIM}, the coefficients $\{a_{ij}\}$ are allowed to depend on the spatial and temporal variables $(x,y,t)$.

The following lemma is used to approximate the individual terms $\mathcal{I}_1$ defined in Theorem \ref{thm:high-moment-error-estimate-2D}. The proof is provided in Appendix \ref{appendix:term1-2}.

\begin{lemma}
\label{lem:error2D-term1-2}
Assume that hypotheses {\rm(iii)} and {\rm(iv)} hold. For any $0\le m\le N_T-1$ and any $\epsilon_0, \epsilon_1 > 0$, we have
\begin{align}
\label{ineq:error2D-psi}
    &\sum_{n=0}^{m}  \int_{t_n}^{t_{n+1}} \| \psi(\cdot,t,u,v_1,v_2) - \psi(\cdot,t_{n},u_h^{n},v_{1,h}^n,v_{2,h}^n) \|^2 \, \mathrm{d}t  \notag \\
    &\leq C\sum_{n=0}^{m} \int_{t_n}^{t_{n+1}} \|u-u^n\|_{1}^2 \, \mathrm{d}t + Ck^2\sum_{n=0}^{m}(1+\| u^n\|_{1}^2) + (2+\epsilon_0) B_1^2 k\sum_{n=0}^{m}  (\| \xi_{v_1}^{n}\|^2 +  \| \xi_{v_2}^{n}\|^2)  \notag \\
    &\quad + Ck \sum_{n=0}^{m}  \| \xi_u^{n}\|^2 + Ch^{2r+2} k \sum_{n=0}^{m} (\| u^{n}\|_{{r+1}}^2 + \| v_1^{n}\|_{{r+1}}^2 + \| v_2^{n}\|_{{r+1}}^2). 
\end{align}
Similarly, we have
\begin{align}
\label{ineq:error2D-g}
    &\sum_{n=0}^{m}  \int_{t_n}^{t_{n+1}} \|g(\cdot,t,u,v_1,v_2) - g(\cdot,t_{n},u_h^{n},v_{1,h}^n,v_{2,h}^n) \|^2 \, \mathrm{d}t  \notag \\
    &\leq C\sum_{n=0}^{m} \int_{t_n}^{t_{n+1}} \|u-u^n\|_{1}^2 \, \mathrm{d}t + Ck^2\sum_{n=0}^{m}(1+\| u^n\|_{1}^2) + (2+\epsilon_1) D_2^2 k\sum_{n=0}^{m}  (\| \xi_{v_1}^{n}\|^2 +  \| \xi_{v_2}^{n}\|^2) \notag \\
    &+ Ck \sum_{n=0}^{m}  \| \xi_u^{n}\|^2 + Ch^{2r+2} k \sum_{n=0}^{m} (\| u^{n}\|_{{r+1}}^2 + \| v_1^{n}\|_{{r+1}}^2 + \| v_2^{n}\|_{{r+1}}^2). 
\end{align}
\end{lemma}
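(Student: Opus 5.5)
The notation $\xi_u^n,\xi_{v_1}^n,\xi_{v_2}^n$ is not defined before the statement. I will use the standard LDG error splitting and assume $e_u^n=\xi_u^n-\eta_u^n$ with $\xi_u^n=\mathcal{P}^-u^n-u_h^n\in\mathbb{V}_h$ and $\eta_u^n=\mathcal{P}^-u^n-u^n$. Analogously, $e_{v_i}^n=\xi_{v_i}^n-\eta_{v_i}^n$, where $\xi_{v_i}^n$ is a projection (say $\mathcal{P}$ or $\mathcal{P}^+$) of $v_i^n$ minus $v_{i,h}^n$. Here $u^n=u(t_n)$ and $v_i^n=v_i(t_n)$. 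I argue \eqref{ineq:error2D-psi} only; \eqref{ineq:error2D-g} is identical with $(B_1,B_4,\epsilon_0)$ replaced by $(D_1/D_2,D_5,\epsilon_1)$.

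Fix $t\in[t_n,t_{n+1}]$ and telescope the difference through three intermediate states. This gives $\psi(t,u,v_1,v_2)-\psi(t_n,u_h^n,v_{1,h}^n,v_{2,h}^n)=J_1+J_2+J_3$, where:
\begin{itemize}
\item[] $J_1=\psi(t,u,v_1,v_2)-\psi(t,u^n,v_1^n,v_2^n)$ is the time-regularity part of the solution;
\item[] $J_2=\psi(t,u^n,v_1^n,v_2^n)-\psi(t_n,u^n,v_1^n,v_2^n)$ is the explicit time-freezing part;
\item[] $J_3=\psi(t_n,u^n,v_1^n,v_2^n)-\psi(t_n,u_h^n,v_{1,h}^n,v_{2,h}^n)$ is the discretization part.
\end{itemize}

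The weight in $\|J_1+J_2+J_3\|^2\le(1+\delta)\|J_3\|^2+C_\delta(\|J_1\|^2+\|J_2\|^2)$ is chosen so that the $\xi_{v}$ coefficient comes out exactly $(2+\epsilon_0)B_1^2$.

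\textbf{$J_1$.} By the Lipschitz bound in (iii), $\|J_1\|^2\le C(\|u-u^n\|^2+\|v_1-v_1^n\|^2+\|v_2-v_2^n\|^2)\le C\|u-u^n\|_1^2$. Integrating over $[t_n,t_{n+1}]$ and summing gives the first term on the right.

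\textbf{$J_2$.} The time-H\"older bound in (iii) gives $\|J_2\|^2\le CB_4^2k(1+\|u^n\|^2+\|v_1^n\|^2+\|v_2^n\|^2)\le Ck(1+\|u^n\|_1^2)$. Integrating over an interval of length $k$ yields the term $Ck^2\sum_n(1+\|u^n\|_1^2)$.

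\textbf{$J_3$.} Pointwise, (iii) gives $|J_3|\le B_1(|e_u^n|+|e_{v_1}^n|+|e_{v_2}^n|)$. Squaring and splitting the $u$-part off with a weighted Young inequality,
\[
|J_3|^2\le (1+\delta)B_1^2\bigl(|e_{v_1}^n|+|e_{v_2}^n|\bigr)^2+C_\delta|e_u^n|^2\le 2(1+\delta)B_1^2\bigl(|e_{v_1}^n|^2+|e_{v_2}^n|^2\bigr)+C_\delta|e_u^n|^2 .
\]
Next split $e_{v_i}^n=\xi_{v_i}^n-\eta_{v_i}^n$, again with a $(1+\delta)$ weight on the $\xi$ part. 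Lemma \ref{lem:proj-property-2d} bounds the projection errors by $\|\eta_{v_i}^n\|\le Ch^{r+1}\|v_i^n\|_{r+1}$ and $\|\eta_u^n\|\le Ch^{r+1}\|u^n\|_{r+1}$. Multiplying by $k$ and summing produces the coefficient $2(1+\delta)^3B_1^2$ on $k\sum_n(\|\xi_{v_1}^n\|^2+\|\xi_{v_2}^n\|^2)$, together with $Ck\sum_n\|\xi_u^n\|^2$ and the $h^{2r+2}$ term. Choosing $\delta$ small relative to $\epsilon_0$ makes the combined constant at most $2+\epsilon_0$.

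The main obstacle is this constant bookkeeping. The sharp factor $2+\epsilon_0$ in front of $\xi_v$ is what matters later, because it is absorbed against $\alpha$. Every other contribution, including those from $J_1$, $J_2$, $\xi_u$ and the projections, may carry a $\delta$-dependent constant $C$.
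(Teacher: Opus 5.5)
Your proposal is correct and follows essentially the same route as the paper: the same three-term splitting, the same weighted inequality $\|a+b+c\|^2\le C(\|a\|^2+\|b\|^2)+(1+\epsilon_0)\|c\|^2$, the Lipschitz and time-H\"older bounds from (iii) for the first two pieces, and the $\xi$--$\eta$ splitting with Lemma \ref{lem:proj-property-2d} for the discretization piece (your assumed definitions of $\xi_u^n,\xi_{v_i}^n$ match the paper's, which uses $\mathcal{P}$ for $v_i$). Your explicit bookkeeping $2(1+\delta)^3B_1^2\le(2+\epsilon_0)B_1^2$ is slightly more careful than the paper, which silently absorbs the outer $(1+\epsilon_0)$ weight into the inner factor $(2+\epsilon_0)$ by relabeling $\epsilon_0$.
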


% \begin{lemma}
% \label{lem:error2D-term3}
% Assuming that hypothesis (ii), (vi) hold, for any $0\le m\le N_T-1$, we have
% \begin{align}
% \label{ineq:error-estimate-term3-2D}
%     \mathcal{I}_3 &:= \max_{0 \leq l \leq m} \sum_{n=0}^{l} \int_{t_n}^{t_{n+1}}  H^+(w_1-w_{1}^{n+1},w_2-w_{2}^{n+1},\xi_u^{n+1}) \, dt  \notag \\
%     &\leq k\sum_{n=0}^{m}  \|\xi_u^{n+1}\|^2 + C\sum_{n=0}^{m} \int_{t_n}^{t_{n+1}} \|u(t)-u^{n+1}\|_{H^2}^2 \, dt + Ck^2\sum_{n=0}^{m} (\|v_1^{n+1}\|_{H^1}^2 + \|v_2^{n+1}\|_{H^1}^2).
% \end{align}
% \end{lemma}

Next, we propose a lemma which establishes a relationship between $\| \xi_{v_i}^n\|$ and $\| \xi_{w_i}^n\|$. This will be used to estimate the term $\mathcal{I}_4$ in the proof of Theorem \ref{thm:high-moment-error-estimate-2D}. The proof is provided in Appendix \ref{appendix-w_h-v_h-2d}. 
\begin{lemma}
\label{lem:w_h-v_h-2D}
Assuming that hypothesis {\rm(ii)} holds, for any $0\le m\le N_T-1$, we have 
\begin{align}
\label{ineq:wh-vh-2d}
    k\sum_{n=0}^m (\| \xi_{w_1}^{n+1}\|^2 &+ \| \xi_{w_2}^{n+1}\|^2) \leq 10\Lambda k\sum_{n=0}^m  (\| \xi_{v_1}^{n+1}\|^2 + \| \xi_{v_2}^{n+1}\|^2) \notag  \\
    &+ Ch^{2r+2} k\sum_{n=0}^m \left(\| u^{n+1}\|_{r+1}^2 + \| w_1^{n+1}\|_{r+1}^2 + \| w_2^{n+1}\|_{r+1}^2 + \| v_1^{n+1}\|_{r+1}^2 + \| v_2^{n+1}\|_{r+1}^2 \right). 
\end{align}
\end{lemma}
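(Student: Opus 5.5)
The estimate is essentially pointwise in $n$, so I will prove it for a single time level and then multiply by $k$ and sum over $n=0,\dots,m$. The definitions of $\xi$ are not yet given in the statement. I assume the standard LDG splitting: $\xi_{w_i}^{n+1} = \mathcal{P}^+ w_i^{n+1} - w_{i,h}^{n+1}$, $\xi_{v_i}^{n+1} = \mathcal{P} v_i^{n+1} - v_{i,h}^{n+1}$ (or the analogous projection used later), and $\eta$ for the corresponding projection errors. The proof only uses that each $\xi$ lies in $\mathbb{V}_h$ and that each $\eta$ satisfies Lemma \ref{lem:proj-property-2d}.

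First I derive an error equation for the $w$-components. The exact solution satisfies $w_1 = a_{11}v_1 + a_{12}v_2$ pointwise, hence also in the $L^2(I_i\times J_j)$ sense against any $z_h\in\mathbb{V}_h$. Subtracting \eqref{eq:fully-discrete-3-2d} gives, for all $z_h\in\mathbb{V}_h$,
\[
(w_1^{n+1}-w_{1,h}^{n+1}, z_h) = (a_{11}(e_{v_1}^{n+1}) + a_{12}(e_{v_2}^{n+1}), z_h),
\]
with $a_{ij}$ evaluated at $t^{n+1}$. Here I use that, in the semilinear setting (vi), or more generally with $A$ evaluated at the same arguments, the coefficients coincide; in the quasilinear setting hypothesis (ii) still gives boundedness. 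I write $e_{w_1} = \xi_{w_1} - \eta_{w_1}$ and $e_{v_j} = \xi_{v_j} - \eta_{v_j}$, up to sign conventions, and choose $z_h = \xi_{w_1}^{n+1}$. This yields
\[
\|\xi_{w_1}^{n+1}\|^2 = (\eta_{w_1}^{n+1},\xi_{w_1}^{n+1}) + (a_{11}(\xi_{v_1}^{n+1}-\eta_{v_1}^{n+1}) + a_{12}(\xi_{v_2}^{n+1}-\eta_{v_2}^{n+1}),\xi_{w_1}^{n+1}).
\]

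Next I bound the right-hand side. By Cauchy--Schwarz and $|a_{ij}|\le\Lambda^{1/2}$ from (ii),
\[
\|\xi_{w_1}^{n+1}\| \le \|\eta_{w_1}^{n+1}\| + \Lambda^{1/2}\bigl(\|\xi_{v_1}^{n+1}\|+\|\xi_{v_2}^{n+1}\|+\|\eta_{v_1}^{n+1}\|+\|\eta_{v_2}^{n+1}\|\bigr).
\]
Squaring with a weighted Young inequality of the form \eqref{ineq:convex-1} gives
\[
\|\xi_{w_1}^{n+1}\|^2 \le (1+\epsilon)\Lambda\bigl(\|\xi_{v_1}^{n+1}\|+\|\xi_{v_2}^{n+1}\|\bigr)^2 + C(\epsilon)\,(\eta\text{ terms})^2 \le 2(1+\epsilon)\Lambda\bigl(\|\xi_{v_1}^{n+1}\|^2+\|\xi_{v_2}^{n+1}\|^2\bigr) + C(\eta\text{ terms})^2.
\]
The same bound holds for $\xi_{w_2}$. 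Summing the two gives a constant $4(1+\epsilon)\Lambda \le 10\Lambda$, so the stated constant $10$ leaves ample room. A cruder choice also suffices: $(a+b+c+d+e)^2\le 5(a^2+\dots+e^2)$ gives $5\Lambda$ per component, hence exactly $10\Lambda$. This is presumably the source of the constant.

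Finally, Lemma \ref{lem:proj-property-2d} bounds the projection errors: $\|\eta_{w_i}^{n+1}\|\le Ch^{r+1}\|w_i^{n+1}\|_{r+1}$ and $\|\eta_{v_i}^{n+1}\|\le Ch^{r+1}\|v_i^{n+1}\|_{r+1}$. The $\|u^{n+1}\|_{r+1}$ term in the statement is harmless; it arises if $\xi_{v}$ is defined through $u$, for example via the $L^-$ relation, and can simply be added. Multiplying by $k$ and summing over $n$ gives \eqref{ineq:wh-vh-2d}.

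The only delicate point is bookkeeping: the projections must be chosen so that each $\xi$ lies in $\mathbb{V}_h$ and is therefore a legitimate test function. It is also essential that the $L^2$ inner product with a variable coefficient $a_{ij}$ needs only the $L^\infty$ bound, and no orthogonality. Both are immediate here, so no real obstacle arises.
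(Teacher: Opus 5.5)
Your proposal is correct and follows the paper's proof essentially line for line. You test the error equation \eqref{eq:error-3-2d} with $\xi_{w_i}^{n+1}$, use $|a_{ij}|\le\Lambda^{1/2}$ with $(a_1+\cdots+a_5)^2\le 5(a_1^2+\cdots+a_5^2)$ to get $5\Lambda$ per component (hence $10\Lambda$), bound the $\eta$ terms by Lemma~\ref{lem:proj-property-2d}, and append the harmless $\|u^{n+1}\|_{r+1}^2$ term. The only inaccuracy is your aside that hypothesis (ii) alone would suffice in the quasilinear case: there the error equation acquires $\bigl((a_{ij}(u)-a_{ij}(u_h))v_j, z_h\bigr)$, which boundedness does not make small, but your actual argument, like the paper's, uses the $u$-independent coefficients of (vi).
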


Lastly, before presenting the main error analysis, we show the high-order approximation to the initial gradient of the solution $u$, provided the initial condition $u_0$ is sufficiently smooth. The proof is provided in Appendix \ref{appendix:xi_v^0-2d}.  
\begin{lemma}
\label{lem:xi_v^0-2d}
Assume $u_0 \in H^{r+2}([0,2\pi]^2)$ and take the initial data $u_h^0$ of the numerical schemes to be any
suitable projections of exact solution satisfying the standard approximation \eqref{ineq:proj-property-1-2d}. 
Let $\xi_{u}^0 := \mathcal{P}^- u_0 - u_{h}^0$, $\xi_{v_i}^0 := \mathcal{P} v_i^0 - v_{i,h}^0$ for $i=1,2$, where $\mathcal{P}^-, \mathcal{P}$ are the two-dimensional Gauss-Radau and $L^2$-projections. Then we have
\begin{align*}
    \| \xi_{u}^0\|^2 + \| \xi_{v_1}^0\|^2 + \| \xi_{v_2}^0\|^2 \leq Ch^{2r+2}. 
\end{align*}
\end{lemma}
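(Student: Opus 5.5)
The plan is to compare the discrete gradient equation \eqref{eq:fully-discrete-2-2d} at $n=0$ with a consistency identity satisfied by the exact initial gradient, and to split the resulting difference so that the optimal bound comes from Lemma \ref{lem:superconvergence}. I will treat $\xi_u^0$ first, then $\xi_{v_1}^0$ and $\xi_{v_2}^0$ together.

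\emph{The bound for $\xi_u^0$.} Write
\[
\xi_u^0=\mathcal{P}^-u_0-u_h^0=(\mathcal{P}^-u_0-u_0)-(u_h^0-u_0).
\]
Both terms are projection errors of the kind covered by \eqref{ineq:proj-property-1-2d}. Hence $\|\xi_u^0\|\le Ch^{r+1}\|u_0\|_{r+1}$, and also $\|\nabla\xi_u^0\|_{\mathcal{T}_h}+h^{-1/2}\|[\xi_u^0]\|_{\Gamma_h}\le Ch^{r}\|u_0\|_{r+1}$. This step is elementary.

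\emph{The error equation for $\xi_{v_i}^0$.} Since $u_0\in H^{r+2}$ is continuous across cell interfaces, integration by parts on each cell gives the consistency identity
\[
(v_1^0,p_h)=L^{x-}(u_0,p_h), \qquad (v_2^0,q_h)=L^{y-}(u_0,q_h), \qquad \forall\, p_h,q_h\in\mathbb{V}_h .
\]
Replacing $v_i^0$ by $\mathcal{P}v_i^0$ on the left costs nothing, because the test functions lie in $\mathbb{V}_h$. Subtracting \eqref{eq:fully-discrete-2-2d} at $n=0$ then yields
\[
(\xi_{v_1}^0,p_h)+(\xi_{v_2}^0,q_h)=L^-(u_0-\mathcal{P}^-u_0,p_h,q_h)+L^-(\xi_u^0,p_h,q_h).
\]
I will take $p_h=\xi_{v_1}^0$ and $q_h=\xi_{v_2}^0$. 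The first term on the right is bounded by \eqref{ineq:superconvergence-1} as $Ch^{r+1}\|u_0\|_{r+2}\,(\|\xi_{v_1}^0\|+\|\xi_{v_2}^0\|)$, which is already of the desired order.

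\emph{The main obstacle: the term $L^-(\xi_u^0,\cdot,\cdot)$.} For this term, \eqref{ineq:alternating-flux-2d} alone only gives $O(h^{r})$, so I need a finer argument according to which projection defines $u_h^0$. If $u_h^0=\mathcal{P}^-u_0$, then $\xi_u^0=0$ and the proof is finished. For another admissible choice $u_h^0=\Pi u_0$, I will not bound $\xi_u^0$ as a single function. Instead I will write
\[
L^-(\xi_u^0,p_h,q_h)=L^-(\mathcal{P}^-u_0-u_0,p_h,q_h)-L^-(\Pi u_0-u_0,p_h,q_h).
\]
The first piece is again controlled by \eqref{ineq:superconvergence-1}. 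For the second piece I will try to prove a superconvergence bound of the same type as Lemma \ref{lem:superconvergence}, using the structure of $\Pi$.
\begin{enumerate}[(a)]
\item For $\Pi=\mathcal{P}$, the volume term $-(\Pi u_0-u_0,(p_h)_x)$ vanishes, since $(p_h)_x\in Q^r$ on each cell.
\item The remaining interface terms involve traces $(\Pi u_0-u_0)^-$. I will estimate them by the standard Bramble--Hilbert/cancellation argument: first reduce to $u_0|_K\in P^{r+1}$, as in the second part of Lemma \ref{lem:superconvergence}, and then show the residual vanishes or is one order smaller. The projection $\mathcal{P}^+$ would be handled by the same type of argument.
\end{enumerate}
If this works, the second piece is also $O(h^{r+1})\|u_0\|_{r+2}(\|p_h\|+\|q_h\|)$.

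\emph{Conclusion.} Dividing by $\|\xi_{v_1}^0\|+\|\xi_{v_2}^0\|$ gives $\|\xi_{v_1}^0\|+\|\xi_{v_2}^0\|\le Ch^{r+1}\|u_0\|_{r+2}$. Combining this with the bound for $\xi_u^0$ and squaring yields the claimed estimate with $C$ depending on $\|u_0\|_{r+2}$. The step I expect to be hardest, and where I am least certain, is this projection-dependent superconvergence of the interface traces in (b). If it fails for a particular projection, the lemma should be read with "suitable" meaning projections for which this trace cancellation holds.
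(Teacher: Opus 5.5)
\textbf{Verdict: the statement as written is false, and step (b) cannot be repaired.} You have correctly located the weak point. Your setup is the paper's:
\begin{itemize}
\item the same error equation at $t=0$;
\item the same test functions $p_h=\xi_{v_1}^0$, $q_h=\xi_{v_2}^0$;
\item the same splitting into $L^-(\xi_u^0,\cdot,\cdot)$ and $L^-(\mathcal{P}^-u_0-u_0,\cdot,\cdot)$;
\item the same use of \eqref{ineq:superconvergence-1} for the second piece.
\end{itemize}
For $u_h^0=\mathcal{P}^-u_0$, where $\xi_u^0=0$, your argument is a complete proof.

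\textbf{Why step (b) fails for $\Pi=\mathcal{P}$ or $\mathcal{P}^+$ when $r\ge1$.} A Bramble--Hilbert argument gives $h^{r+1}\|u_0\|_{r+2}$ only if the local error functional vanishes for $u_0|_K\in P^{r+1}$, and here it does not. Work in one dimension, or in two dimensions with $u_0=u_0(x)$, where the $y$-part of $L^-$ drops out. Suppose $u_0|_{I_i}\in P^{r+1}$. In local Legendre polynomials, $u_0-\mathcal{P}u_0=\lambda_iL_{r+1}$ and $u_0-\mathcal{P}^+u_0=\lambda_i(L_{r+1}+L_r)$, with $\lambda_i=\frac{(r+1)!}{(2r+2)!}h^{r+1}u_0^{(r+1)}(x_i)$. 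So the right-endpoint traces of $u_0-\Pi u_0$ are $\lambda_i$, resp.\ $2\lambda_i$, not zero.

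This gives an explicit counterexample. Take $u_h^0=\mathcal{P}u_0$. The volume terms vanish by orthogonality, so $(\xi_v^0,p_h)=-\sum_i c_i[p_h]_{i+1/2}$ with $c_i=\lambda_i+O(h^{r+2})$. Test with $p_h|_{I_i}=-u_0^{(r+1)}(x_i)\,\hat x$, where $\hat x\in[-1,1]$ is the local coordinate. This $p_h$ has $\|p_h\|=O(1)$ and jumps $[p_h]_{i+1/2}\approx 2u_0^{(r+1)}(x_i)$. On a uniform mesh this gives $|(\xi_v^0,p_h)|\approx 2\frac{(r+1)!}{(2r+2)!}h^{r}\|u_0^{(r+1)}\|^2$, hence $\|\xi_v^0\|\ge ch^r$. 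The same holds for $\mathcal{P}^+$. So for these admissible initial data the lemma itself fails, and no trace cancellation can rescue (b).

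\textbf{The paper's proof has the same hole.} It bounds $L^-(\xi_u^0,\xi_{v_1}^0,\xi_{v_2}^0)$ by \eqref{ineq:alternating-flux-2d}. It then asserts $\|\nabla\xi_u^0\|^2+\mu^2h^{-1}\|[\xi_u^0]\|_{\Gamma_h}^2\le Ch^{2r+2}$ via \eqref{ineq:proj-property-1-2d}. But \eqref{ineq:proj-property-1-2d}, or an inverse inequality applied to $\|\xi_u^0\|\le Ch^{r+1}$, yields only $Ch^{2r}$. This is exactly the loss you identified.

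\textbf{The correct statement is your fallback.} Take $u_h^0=\mathcal{P}^-u_0$, or more generally any $u_h^0$ with $\|\nabla\xi_u^0\|+h^{-1/2}\|[\xi_u^0]\|_{\Gamma_h}\le Ch^{r+1}$; your proof is then complete.

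\textbf{Consequence for the main theorem.} Theorem \ref{thm:high-moment-error-estimate-2D} does not actually need the optimal bound. There, $\xi_{v_i}^0$ enters only through terms of the form $k\|\xi_{v_i}^0\|^2$. For any admissible $u_h^0$, Lemma \ref{lem:v_h^0-2d} gives $\|\xi_{v_i}^0\|\le\|u_0\|_1+\|v_{i,h}^0\|\le C\|u_0\|_1$. So those terms are absorbed into the $O(k)$ part of the estimate.
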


Next, we give the optimal convergence theorem for the fully discrete scheme. The theorem hinges on the following regularity assumption ($\mathcal{A}_q$) for the exact solution, and a bound on the stochastic parabolic constant. 

\begin{enumerate}
    \item[($\mathcal{A}_q$)] For any fixed $q\in[1,\infty)$, suppose $u_0 \in H^{r+2}$ and 
    \begin{align*}
        &\alpha > \alpha_1 := \left(2^{3q-1}C_b' + 2^{6q-4}C_b \right)^{1/q}C_b^{1/q}D_2^2K,  \qquad u \in L^{2q}(\Omega, L^{\infty}[0,T; H^{r+3}]), \\
        &w_1,w_2 \in L^{2q}(\Omega \times[0,T]; H^{r+2}), \qquad \{\psi,g\}(\cdot,u,\nabla u) \in L^{2q}(\Omega \times[0,T]; H^{r+1}). 
    \end{align*}
\end{enumerate}

% For arbitrarily fixed $q\in[1,\infty)$, suppose $u_0 \in H^{r+2}$, $u \in L^{2q}(\Omega; L^{\infty}[0,T; H^{r+3}])$, $w_1,w_2 \in L^{2q}(\Omega \times[0,T]; H^{r+2})$, $\{\psi,g\}(\cdot,u,\nabla u) \in L^{2q}(\Omega \times[0,T]; H^{r+1})$, as well as $\alpha > \alpha_1 := \left(1 + 2^{3q-3}  \right)^{1/q}C_b^{2/q}D_2^2K$. 

We have the following optimal error estimates for the fully discrete scheme.
\begin{theorem}[Fully Discrete Optimal Error Estimate]
\label{thm:high-moment-error-estimate-2D}
    Assume hypotheses {\rm(i)}-{\rm(vi)} and ($\mathcal{A}_q$). Then there exists a positive constant $C$, independent of $h$ and $k$, such that
\begin{align}
\label{ineq:high-moment-error-estimate-2D}
    \mathbb{E}\left[  \| \mathbf{e_{u}} \|_{\mathcal{S}}^{2q} \right]^{\frac{1}{2q}} + \mathbb{E}\left[ \langle \mathbf{e_{u}} \rangle^{q} \right]^{\frac{1}{2q}} + \mathbb{E}\left[  \left(\| \mathbf{e_{v_1}} \|_{\ell^2}^{2} + \| \mathbf{e_{v_2}} \|_{\ell^2}^{2} \right)^q \right]^{\frac{1}{2q}} \leq C(k^{1/2} + h^{r+1}). 
    % \mathbb{E}\left[\max_{0 \leq n \leq N_T} \| e_u^{n+1}\|^{2q} \right]^{\frac{1}{2q}} + \mathbb{E}\left[ \left(k\sum_{n=0}^{N_T} (\|e_{v_1}^{n+1}\|^2 + \|e_{v_2}^{n+1}\|^2) \right)^q \right]^{\frac{1}{2q}} \leq C(k^{\frac{1}{2}} + h^{r+1}).
\end{align} 
\end{theorem}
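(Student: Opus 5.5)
We split the error with projections:
\[
e_u^n=(u^n-\mathcal P^-u^n)+\xi_u^n,\qquad \xi_u^n:=\mathcal P^-u^n-u_h^n,
\]
\[
e_{v_i}^n=(v_i^n-\mathcal P v_i^n)+\xi_{v_i}^n,\qquad e_{w_i}^n=(w_i^n-\mathcal P^+w_i^n)+\xi_{w_i}^n .
\]
The projection parts are $O(h^{r+1})$ in every moment by Lemma \ref{lem:proj-property-2d} and ($\mathcal A_q$). The task is therefore to bound $\xi$ in the same three norms as Theorem \ref{thm:stability-estimate-2D}.

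\textbf{Error equation.} Integrate the exact equation over $[t_n,t_{n+1}]$ and test with $r_h\in\mathbb V_h$. Because the exact solution is smooth, the flux terms of $H^+$ and $L^-$ are consistent for the exact $(u,v_i,w_i)$. Subtracting the scheme \eqref{eq:fully-discrete-1-2d}--\eqref{eq:fully-discrete-4-2d} gives a $\xi$-system of the same structure as the scheme, plus residuals:
\begin{enumerate}[(a)]
\item time-lag terms $\int_{t_n}^{t_{n+1}}H^+(w_1-w_1^{n+1},w_2-w_2^{n+1},\xi_u^{n+1})\,\mathrm dt$;
\item projection terms $H^+(\mathcal P^+w_1-w_1,\mathcal P^+w_2-w_2,\cdot)$ and $L^-(\mathcal P^-u-u,\cdot,\cdot)$, which are $O(h^{r+1})$ by Lemma \ref{lem:superconvergence};
\item drift differences $\int_{t_n}^{t_{n+1}}\bigl(\psi(t,u,\nabla u)-\psi(t_n,u_h^n,v_{1,h}^n,v_{2,h}^n)\bigr)\,\mathrm dt$;
\item the stochastic difference $\int_{t_n}^{t_{n+1}}\bigl(g(t,u,\nabla u)-g(t_n,\cdot)\bigr)\,\mathrm dW$, which is a martingale increment.
\end{enumerate}

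\textbf{Energy identity.} Take $r_h=\xi_u^{n+1}$, $p_h=\xi_{w_i}^{n+1}$, $z_h=\xi_{v_i}^{n+1}$ and use Lemma \ref{lem:num-flux-2D}. Hypothesis (vi) makes $A$ independent of $u$, so the $w$-equation is linear. This yields
\[
\tfrac12\|\xi_u^{n+1}\|^2-\tfrac12\|\xi_u^n\|^2+\tfrac12\|\xi_u^{n+1}-\xi_u^n\|^2+\alpha k\bigl(\|\xi_{v_1}^{n+1}\|^2+\|\xi_{v_2}^{n+1}\|^2\bigr)
\]
plus cross terms. These involve $A$ applied to projection errors, controlled through $(\xi_{w_i},\xi_{v_i})$ with Lemma \ref{lem:w_h-v_h-2D}.

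\textbf{Bounding the residuals.}
\begin{itemize}
\item Term (a): move the derivative via $H^+=-L^-$, apply \eqref{ineq:alternating-flux-2d} and Young's inequality with a small weight on $k\|\xi_v^{n+1}\|^2$. What remains is $\int\|w(t)-w^{n+1}\|_1^2\,\mathrm dt$. By hypothesis (vi) and Lemma \ref{lem:Holder-High-Moments}/Corollary \ref{coro:Holder-High-Moments} with $m=2$ (using $u\in H^{r+3}$), this is $O(k)$ in the $q$-th moment.
\item Term (c): use Young's inequality and \eqref{ineq:error2D-psi}. This gives a small multiple of $B_1^2k\sum\|\xi_v^n\|^2$, together with $\xi_u$ terms to be handled by Gr\"onwall and $O(k+h^{2r+2})$ terms.
\item Term (d): split it as $(\cdot,\xi_u^{n+1}-\xi_u^n)+(\cdot,\xi_u^n)$, exactly as for $I_2,I_3$ in Theorem \ref{thm:stability-estimate-2D}. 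Then use the discrete and continuous BDG inequalities (Remark \ref{rmk:BDG}) together with \eqref{ineq:error2D-g}.
\end{itemize}

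\textbf{Closing the argument.} Take the maximum over $l\le m$, raise to the $q$-th power with Lemma \ref{lem:convex-ineq}, take expectations, and finish with discrete Gr\"onwall and Lemma \ref{lem:xi_v^0-2d} for the initial data. This reproduces \eqref{ineq:high-moment-stability-beforefinal-2d} with $D_4$ replaced by $\sqrt{2+\epsilon_1}\,D_2$ and a right-hand side of $C(k^q+h^{(2r+2)q})$.

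\textbf{Main obstacle.} Positivity of the coefficient of $\mathbb E[(k\sum\|\xi_v\|^2)^q]$ is the hard step. The factor $(2+\epsilon_1)$ in \eqref{ineq:error2D-g} becomes $2^q$ after raising to the $q$-th power, which accounts for the extra $2^q$ in $\alpha_1$ compared with $\alpha_0$. The $\epsilon_i$ must be chosen as in Appendix \ref{appendix-eps-choice} so that $\alpha>\alpha_1$ suffices. A secondary difficulty is the $u$-dependence inside the time-continuity terms, which requires the $H^{r+3}$ regularity to obtain an $O(k^{1/2})$ H\"older bound for $w$ in $H^1$.
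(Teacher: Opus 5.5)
Your skeleton matches the paper: the same projections $\mathcal P^-,\mathcal P,\mathcal P^+$, the same test functions, Lemmas \ref{lem:superconvergence}, \ref{lem:w_h-v_h-2D} and \ref{lem:error2D-term1-2}, and the BDG bookkeeping that produces the extra factor $2^q$ in $\alpha_1$. However, your residual list (a)--(d) omits the one term that is genuinely delicate in the stochastic setting.

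\textbf{The missing term.} You take $\xi_u^n=\mathcal P^-u^n-u_h^n$ with the Gauss--Radau projection, which you need for the superconvergence of $L^-(\eta_u,\cdot,\cdot)$. Testing $e_u^{n+1}-e_u^n$ with $\xi_u^{n+1}$ then leaves $\sum_n(\eta_u^{n+1}-\eta_u^n,\xi_u^{n+1})$, where $\eta_u^n=\mathcal P^-u^n-u^n$. This does not vanish, since $\mathcal P^-u-u$ is orthogonal only to $Q^{r-1}$. Split it as $(\eta_u^{n+1}-\eta_u^n,\xi_u^{n+1}-\xi_u^n)+(\eta_u^{n+1}-\eta_u^n,\xi_u^n)$. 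The first piece is harmless (Young's inequality plus Corollary \ref{coro:Holder-High-Moments}). The second cannot be bounded crudely. The estimate $\|\eta_u^{n+1}-\eta_u^n\|\le Ch^{r+1}\|u^{n+1}-u^n\|_{r+1}$ is only $O(h^{r+1}k^{1/2})$ per step. Summing over $T/k$ steps and applying Young gives a contribution of order $h^{2r+2}/k$ to the squared error, which is not $O(k+h^{2r+2})$ unless $h$ and $k$ are coupled.

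\textbf{How the paper handles it.} The missing idea is to expand the increment using the equation itself:
\[
\eta_u^{n+1}-\eta_u^n=(\mathcal P^--I)\Bigl[\int_{t_n}^{t_{n+1}}\bigl((w_1)_x+(w_2)_y+\psi\bigr)\,\mathrm{d}t+\int_{t_n}^{t_{n+1}}g\,\mathrm{d}W_t\Bigr].
\]
The drift part then costs only $k\sum_n\|\xi_u^n\|^2+Ch^{2r+2}\int_0^T\bigl(\|(w_1)_x\|_{r+1}^2+\|(w_2)_y\|_{r+1}^2+\|\psi\|_{r+1}^2\bigr)\,\mathrm{d}t$; this is one place where $(\mathcal A_q)$ is used. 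The stochastic part $\sum_n\int_{t_n}^{t_{n+1}}(\mathcal P^-g-g,\xi_u^n)\,\mathrm{d}W_t$ is a discrete martingale, because $\xi_u^n$ is $\mathbb{F}_{t_n}$-measurable. BDG bounds the $q$-th moment of its maximum by $\epsilon\,\mathbb E[\max_n\|\xi_u^n\|^{2q}]+Ch^{(2r+2)q}$. This is the paper's $\mathcal T_1$, and it is why $\mathcal C_1$ in the error proof carries the extra term $\widetilde C^*(\epsilon_7)2^q\epsilon_8$.

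\textbf{A secondary problem in term (a).} Your handling of (a) does not work as written. You rewrite $H^+(w_1-w_1^{n+1},w_2-w_2^{n+1},\xi_u^{n+1})=-L^-(\xi_u^{n+1},w_1-w_1^{n+1},w_2-w_2^{n+1})$ and apply \eqref{ineq:alternating-flux-2d}. That produces $\|\nabla\xi_u^{n+1}\|+\mu h^{-1/2}\|[\xi_u^{n+1}]\|_{\Gamma_h}$, not $\|\xi_{v_i}^{n+1}\|$. Nothing in the paper converts one into the other. The error equation \eqref{eq:error-2-2d} cannot be invoked either, because $w_i-w_i^{n+1}\notin\mathbb V_h$. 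You could repair this by splitting off $\mathcal P^+(w_i-w_i^{n+1})$ and using \eqref{eq:error-2-2d} together with Lemma \ref{lem:superconvergence}, but that is unnecessary.

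The paper simply uses that $w_i-w_i^{n+1}$ is smooth, so $H^+(w_1-w_1^{n+1},w_2-w_2^{n+1},\xi_u^{n+1})=((w_1-w_1^{n+1})_x+(w_2-w_2^{n+1})_y,\xi_u^{n+1})$. Young's inequality then gives two pieces:
\begin{itemize}
\item $k\|\xi_u^{n+1}\|^2$, which Gr\"onwall absorbs without touching the $\alpha$-budget;
\item $\int_{t_n}^{t_{n+1}}\bigl(\|(w_1-w_1^{n+1})_x\|^2+\|(w_2-w_2^{n+1})_y\|^2\bigr)\,\mathrm{d}t$, which under (vi) is exactly the time-continuity residual you ended up writing down.
\end{itemize}
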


\begin{proof} 
We divide the long proof into two steps. First, recall that
\[
e_u^n=\xi_u^n-\eta_u^n, \qquad
e_{v_i}^n=\xi_{v_i}^n-\eta_{v_i}^n, \qquad
e_{w_i}^n=\xi_{w_i}^n-\eta_{w_i}^n, \qquad i=1,2.
\]
Using the two-dimensional Gauss-Radau projections $\mathcal{P}^{\pm}$ and $L^2$-projection $\mathcal{P}$ defined in Section \ref{subsec:prelim-2d}, we can decompose the numerical error $e_u^n$ into two terms, by setting $\xi_u^n = \mathcal{P}^-u^n - u_h^n$, and $\eta_u^n = \mathcal{P}^-u^n - u^n$, where $\eta_u^n$ represents the projection error. Similarly, we define $\xi_{v_i}^n = \mathcal{P} v_i^n - v_{i,h}^n$, $\eta_{v_i}^n = \mathcal{P} v_i^n - v_i^n$ and $\xi_{w_i}^n = \mathcal{P}^+ w_i^n - w_{i,h}^n$, $\eta_{w_i}^n = \mathcal{P}^+ w_i^n - w_i^n$. 

\noindent {\it Step 1.} Let $a_{ij}^{n+1} = a_{ij}(\cdot,t_{n+1})$ and $A^{n+1} = A(\cdot,t_{n+1})$. Subtracting the fully discrete scheme \eqref{eq:fully-discrete-1-2d}-\eqref{eq:fully-discrete-4-2d} from the corresponding identities satisfied by the exact solution, and summing over all cells $I_i \times J_j$, we get the following error equations
\begin{align}
    (e_u^{n+1},r_h) &= (e_u^{n},r_h) + \int_{t_n}^{t_{n+1}} H^+(w_1-w_{1,h}^{n+1},w_2-w_{2,h}^{n+1},r_h) \, \mathrm{d}t \notag \\
    &\quad + \left( \int_{t_n}^{t_{n+1}} \psi(x,y,t,u,\nabla u) - \psi(x,y,t_{n},u_h^{n},v_{1,h}^n,v_{2,h}^n) \, \mathrm{d}t, r_h \right) \notag \\
    &\quad + \left( \int_{t_n}^{t_{n+1}} g(x,y,t,u,\nabla u) - g(x,y,t_{n},u_h^{n},v_{1,h}^n,v_{2,h}^n) \, \mathrm{d}W_t, r_h \right). \label{eq:error-1-2d} \\
    (e_{v_1}^{n+1},p_h) &+ (e_{v_2}^{n+1},q_h) = L^-(e_u^{n+1},p_h,q_h). \label{eq:error-2-2d} \\
    (e_{w_1}^{n+1},z_h) &= \big(a_{11}^{n+1} e_{v_1}^{n+1},z_h \big) + \big(a_{12}^{n+1} e_{v_2}^{n+1}, z_h \big), \quad (e_{w_2}^{n+1},\phi_h) = \big(a_{21}^{n+1}e_{v_1}^{n+1}, \phi_h \big) + \big(a_{22}^{n+1}e_{v_2}^{n+1} , \phi_h \big). \label{eq:error-3-2d} 
    % \\ (e_{w_2}^{n+1},\phi_h) &= \big(a_{21}^{n+1}e_{v_1}^{n+1}, \phi_h \big) + \big(a_{22}^{n+1}e_{v_1}^{n+1} , \phi_h \big).  \label{eq:error-4-2d}
\end{align}

Taking test functions $p_h = \xi_{w_1}^{n+1}$, $q_h = \xi_{w_2}^{n+1}$, $z_h = \xi_{v_1}^{n+1}$, $\phi_h = \xi_{v_2}^{n+1}$ in \eqref{eq:error-2-2d}-\eqref{eq:error-3-2d}, and using the definition of $L^2$ projection, we get
\begin{align*}
(\xi_{v_1}^{n+1},\xi_{w_1}^{n+1}) + (\xi_{v_2}^{n+1},\xi_{w_2}^{n+1})
&= (\eta_{v_1}^{n+1},\xi_{w_1}^{n+1})+ (\eta_{v_2}^{n+1},\xi_{w_2}^{n+1}) + L^-(e_u^{n+1},\xi_{w_1}^{n+1},\xi_{w_2}^{n+1}) \\
&= L^-(\xi_u^{n+1},\xi_{w_1}^{n+1},\xi_{w_2}^{n+1})
   - L^-(\eta_u^{n+1},\xi_{w_1}^{n+1},\xi_{w_2}^{n+1}),
\end{align*}
and
\begin{align*}
(\xi_{w_1}^{n+1},\xi_{v_1}^{n+1}) + (\xi_{w_2}^{n+1},\xi_{v_2}^{n+1})
&= (\eta_{w_1}^{n+1},\xi_{v_1}^{n+1}) + (\eta_{w_2}^{n+1},\xi_{v_2}^{n+1}) \\
&\quad + \bigl(a_{11}^{n+1}\xi_{v_1}^{n+1}-a_{11}^{n+1}\eta_{v_1}^{n+1},\xi_{v_1}^{n+1}\bigr)
      + \bigl(a_{12}^{n+1}\xi_{v_2}^{n+1}-a_{12}^{n+1}\eta_{v_2}^{n+1},\xi_{v_1}^{n+1}\bigr) \\
&\quad + \bigl(a_{21}^{n+1}\xi_{v_1}^{n+1}-a_{21}^{n+1}\eta_{v_1}^{n+1},\xi_{v_2}^{n+1}\bigr)
      + \bigl(a_{22}^{n+1}\xi_{v_2}^{n+1}-a_{22}^{n+1}\eta_{v_2}^{n+1},\xi_{v_2}^{n+1}\bigr) .
\end{align*}
Combining these two identities gives
\begin{align*}
    \bigl(A^{n+1}&(\xi_{v_1}^{n+1},\xi_{v_2}^{n+1})^{\top}, (\xi_{v_1}^{n+1},\xi_{v_2}^{n+1})^{\top}\bigr)
    = L^-(\xi_u^{n+1},\xi_{w_1}^{n+1},\xi_{w_2}^{n+1}) - L^-(\eta_u^{n+1},\xi_{w_1}^{n+1},\xi_{w_2}^{n+1}) \\ 
    &\quad - (\eta_{w_1}^{n+1},\xi_{v_1}^{n+1}) - (\eta_{w_2}^{n+1},\xi_{v_2}^{n+1})
    + \big(a_{11}^{n+1}\,\eta_{v_1}^{n+1} + a_{12}^{n+1}\,\eta_{v_2}^{n+1},\, \xi_{v_1}^{n+1} \big)
    + \big(a_{21}^{n+1}\,\eta_{v_1}^{n+1} + a_{22}^{n+1}\,\eta_{v_2}^{n+1},\, \xi_{v_2}^{n+1} \big).
\end{align*}
Multiplying the above equation by $k$, adding it to \eqref{eq:error-1-2d} with $r_h = \xi_u^{n+1}$, using the identity $L^-(\xi_u^{n+1},\xi_{w_1}^{n+1},\xi_{w_2}^{n+1}) + H^+(\xi_{w_1}^{n+1},\xi_{w_2}^{n+1},\xi_u^{n+1}) = 0$ from Lemma \ref{lem:num-flux-2D}, and summing over $n=0, \ldots, l$, we get
\begin{align*}
    &\sum_{n=0}^l(\xi_u^{n+1}-\xi_u^{n},\xi_u^{n+1}) + k\sum_{n=0}^l \bigl(A^{n+1}(\xi_{v_1}^{n+1},\xi_{v_2}^{n+1})^{\top}, (\xi_{v_1}^{n+1},\xi_{v_2}^{n+1})^{\top}\bigr) \\
    &= \sum_{n=0}^l(\eta_u^{n+1}-\eta_u^{n},\xi_u^{n+1}-\xi_u^n) + \sum_{n=0}^l(\eta_u^{n+1}-\eta_u^{n},\xi_u^{n}) 
    - k\sum_{n=0}^l  L^-(\eta_u^{n+1},\xi_{w_1}^{n+1},\xi_{w_2}^{n+1}) \\
    &\quad - k\sum_{n=0}^l  H^+(\eta_{w_1}^{n+1},\eta_{w_2}^{n+1},\xi_u^{n+1}) 
    - k\sum_{n=0}^l (\eta_{w_1}^{n+1},\xi_{v_1}^{n+1}) - k\sum_{n=0}^l  (\eta_{w_2}^{n+1},\xi_{v_2}^{n+1}) \\
    &\quad + k\sum_{n=0}^l  \big(a_{11}^{n+1}\,\eta_{v_1}^{n+1}+a_{12}^{n+1}\,\eta_{v_2}^{n+1},\, \xi_{v_1}^{n+1} \big)
    + k\sum_{n=0}^l  \big(a_{21}^{n+1}\,\eta_{v_1}^{n+1}+a_{22}^{n+1}\,\eta_{v_2}^{n+1},\, \xi_{v_2}^{n+1} \big) \\
    &\quad 
    + \sum_{n=0}^l \int_{t_n}^{t_{n+1}}  H^+(w_1-w_{1}^{n+1},w_2-w_{2}^{n+1},\xi_u^{n+1}) \, \mathrm{d}t \\
    &\quad + \sum_{n=0}^l  \left( \int_{t_n}^{t_{n+1}} \psi(x,y,t,u,v_1,v_2) - \psi(x,y,t_{n},u_h^{n},v_{1,h}^n,v_{2,h}^n) \, \mathrm{d}t, \xi_u^{n+1} \right) \\
    &\quad + \sum_{n=0}^l  \left( \int_{t_n}^{t_{n+1}} g(x,y,t,u,v_1,v_2) - g(x,y,t_{n},u_h^{n},v_{1,h}^n,v_{2,h}^n) \, \mathrm{d}W_t, \xi_u^{n+1}-\xi_u^n \right) \\
    &\quad + \sum_{n=0}^l  \left( \int_{t_n}^{t_{n+1}} g(x,y,t,u,v_1,v_2) - g(x,y,t_{n},u_h^{n},v_{1,h}^n,v_{2,h}^n) \, \mathrm{d}W_t, \xi_u^{n} \right).
\end{align*}

By hypothesis {\rm(ii)}, the left-hand side of the above equation is bounded from below by
\begin{align*}
    LHS &= \frac{1}{2}\sum_{n=0}^l (\| \xi_u^{n+1}\|^2 - \| \xi_u^{n}\|^2) + \frac{1}{2} \sum_{n=0}^l \| \xi_u^{n+1} - \xi_u^{n}\|^2 + k\sum_{n=0}^l \bigl(A^{n+1}(\xi_{v_1}^{n+1},\xi_{v_2}^{n+1})^{\top}, (\xi_{v_1}^{n+1},\xi_{v_2}^{n+1})^{\top}\bigr) \\
    &\geq \frac{1}{2}\| \xi_u^{l+1}\|^2 - \frac{1}{2}\| \xi_u^{0}\|^2 + \frac{1}{2} \sum_{n=0}^l \| \xi_u^{n+1} - \xi_u^{n}\|^2 + \alpha k\sum_{n=0}^l  (\| \xi_{v_1}^{n+1}\|^2 + \| \xi_{v_2}^{n+1}\|^2).
\end{align*}
Since $\|\xi_u^0\| + \|\xi_{v_1}^0\|+\|\xi_{v_2}^0\| \leq Ch^{r+1}$ by Lemma \ref{lem:xi_v^0-2d}, these three initial terms will not affect the error accuracy, and are omitted in the following proof. For any $m=0,\ldots,N_T-1$, taking $\max_{0\le l\le m}$ yields
\begin{align}
\label{ineq:error-main-2D}
    &\frac{1}{2} \max_{0 \leq n \leq m} \| \xi_u^{n+1}\|^2 + \frac{1}{2} \sum_{n=0}^{m} \| \xi_u^{n+1} - \xi_u^{n}\|^2 + \alpha k \sum_{n=0}^{m} (\| \xi_{v_1}^{n+1}\|^2 + \| \xi_{v_2}^{n+1}\|^2) \leq Ch^{2r+2}+2(\mathcal{I}_1 + \mathcal{I}_2 + \mathcal{I}_3 + \mathcal{I}_4),
\end{align}
where
\begin{align*}
    \mathcal{I}_1
    &:= \max_{0 \leq l \leq m} \sum_{n=0}^l(\eta_u^{n+1}-\eta_u^{n},\xi_u^{n+1}-\xi_u^n)
    + \max_{0 \leq l \leq m} k\sum_{n=0}^l (-\eta_{w_1}^{n+1},\xi_{v_1}^{n+1})
    + \max_{0 \leq l \leq m} k\sum_{n=0}^l (-\eta_{w_2}^{n+1},\xi_{v_2}^{n+1}) \\
    &\quad + \max_{0 \leq l \leq m} k\sum_{n=0}^l \bigl(a_{11}^{n+1}\eta_{v_1}^{n+1}+a_{12}^{n+1}\eta_{v_2}^{n+1}, \xi_{v_1}^{n+1} \bigr)
    + \max_{0 \leq l \leq m} k\sum_{n=0}^l \bigl(a_{21}^{n+1}\eta_{v_1}^{n+1}+a_{22}^{n+1}\eta_{v_2}^{n+1}, \xi_{v_2}^{n+1} \bigr) \\
    &\quad + \max_{0 \leq l \leq m} \sum_{n=0}^l \left( \int_{t_n}^{t_{n+1}} \bigl[\psi(x,y,t,u,v_1,v_2)-\psi(x,y,t_n,u_h^{n},v_{1,h}^n,v_{2,h}^n)\bigr]\,\mathrm{d}t,\, \xi_u^{n+1} \right) \\
    &\quad + \max_{0 \leq l \leq m} \sum_{n=0}^l \left( \int_{t_n}^{t_{n+1}} \bigl[g(x,y,t,u,v_1,v_2)-g(x,y,t_n,u_h^{n},v_{1,h}^n,v_{2,h}^n)\bigr]\,\mathrm{d}W_t,\, \xi_u^{n+1}-\xi_u^n \right), \\
    \mathcal{I}_2
    &:= \max_{0 \leq l \leq m} \sum_{n=0}^l(\eta_u^{n+1}-\eta_u^{n},\xi_u^n)
    + \max_{0 \leq l \leq m} \sum_{n=0}^l \left( \int_{t_n}^{t_{n+1}} \bigl[g(x,y,t,u,v_1,v_2)-g(x,y,t_n,u_h^{n},v_{1,h}^n,v_{2,h}^n)\bigr]\,\mathrm{d}W_t,\, \xi_u^n \right), \\
    \mathcal{I}_3
    &:= \max_{0 \leq l \leq m} \sum_{n=0}^l \int_{t_n}^{t_{n+1}} H^+(w_1-w_1^{n+1},w_2-w_2^{n+1},\xi_u^{n+1})\,\mathrm{d}t, \\
    \mathcal{I}_4
    &:= \max_{0 \leq l \leq m} k\sum_{n=0}^l -L^-(\eta_u^{n+1},\xi_{w_1}^{n+1},\xi_{w_2}^{n+1})
       + \max_{0 \leq l \leq m} k\sum_{n=0}^l -H^+(\eta_{w_1}^{n+1},\eta_{w_2}^{n+1},\xi_u^{n+1}).
\end{align*}
Let $ \{ \epsilon_i >0: i=0,1,\ldots,9\}$ be arbitrary constants. By the Cauchy-Schwarz inequality, Young's inequality and Lemma \ref{lem:proj-property-2d}, we have
\begin{align}
\label{ineq:error-estimate-term1-2D}
    2\mathcal{I}_1
    &\leq Ch^{2r+2}\sum_{n=0}^{m} \|u^{n+1}-u^n\|_{{r+1}}^2
    + \epsilon_2 \sum_{n=0}^{m} \|\xi_u^{n+1}-\xi_u^n\|^2
    + Ch^{2r+2} k\sum_{n=0}^{m} (\| w_1^{n+1} \|_{{r+1}}^2 + \| w_2^{n+1} \|_{{r+1}}^2) \notag\\
    &\quad + \epsilon_3 k \sum_{n=0}^{m} (\| \xi_{v_1}^{n+1}\|^2 + \| \xi_{v_2}^{n+1}\|^2)
    + Ch^{2r+2} k\sum_{n=0}^{m} (\| v_1^{n+1} \|_{{r+1}}^2 + \| v_2^{n+1} \|_{{r+1}}^2) \notag\\
    &\quad + \frac{k}{\epsilon_4}\sum_{n=0}^{m} \| \xi_u^{n+1}\|^2
    + \epsilon_4 \sum_{n=0}^{m} \int_{t_n}^{t_{n+1}} \| \psi(x,y,t,u,v_1,v_2) - \psi(x,y,t_n,u_h^{n},v_{1,h}^n,v_{2,h}^n) \|^2 \, \mathrm{d}t \notag\\
    &\quad + \frac{1}{\epsilon_5} \sum_{n=0}^{m} \|\xi_u^{n+1}-\xi_u^n\|^2
    + \epsilon_5 \sum_{n=0}^{m} \left\| \int_{t_n}^{t_{n+1}} \bigl[g(x,y,t,u,v_1,v_2)-g(x,y,t_n,u_h^{n},v_{1,h}^n,v_{2,h}^n)\bigr] \, \mathrm{d}W_t \right\|^2.
\end{align}
To estimate $\mathcal{I}_2$, we note that
\[
\eta_u^{n+1}-\eta_u^n
=
(\mathcal P^- - I)\left[
\int_{t_n}^{t_{n+1}} \bigl((w_1)_x+(w_2)_y+\psi\bigr)\,\mathrm{d}t
+
\int_{t_n}^{t_{n+1}} g\,\mathrm{d}W_t
\right].
\]
Hence, by H\"older's inequality and Lemma \ref{lem:proj-property-2d},
\begin{align}
\label{ineq:error-estimate-term2-2D}
    2\mathcal{I}_{2}
    &\leq k\sum_{n=0}^{m} \| \xi_u^{n}\|^2
    + Ch^{2r+2}\int_0^T \bigl(\| (w_1)_x\|_{{r+1}}^2 + \| (w_2)_y\|_{{r+1}}^2 + \| \psi\|_{{r+1}}^2\bigr)\,\mathrm{d}t \notag\\
    &\quad + 2\max_{0 \leq l \leq m} \sum_{n=0}^{l} \int_{t_n}^{t_{n+1}} (\mathcal{P}^-g -g, \xi_u^{n}) \, \mathrm{d}W_t \notag\\
    &\quad + 2\max_{0 \leq l \leq m} \sum_{n=0}^{l} \left( \int_{t_n}^{t_{n+1}} \bigl[g(x,y,t,u,v_1,v_2)-g(x,y,t_n,u_h^{n},v_{1,h}^n,v_{2,h}^n)\bigr] \, \mathrm{d}W_t, \xi_u^{n} \right).
\end{align}
The estimate for $\mathcal{I}_3$ is given as:
\begin{align}
\label{ineq:error-estimate-term3-2D}
    \mathcal{I}_3 &:= \max_{0 \leq l \leq m} \sum_{n=0}^{l} \int_{t_n}^{t_{n+1}}  H^+(w_1-w_{1}^{n+1},w_2-w_{2}^{n+1},\xi_u^{n+1}) \, \mathrm{d}t  \notag \\
    &\leq k\sum_{n=0}^{m}  \|\xi_u^{n+1}\|^2 + C\sum_{n=0}^{m} \int_{t_n}^{t_{n+1}} \|u(t)-u^{n+1}\|_{2}^2 \, \mathrm{d}t + Ck^2\sum_{n=0}^{m} (\|v_1^{n+1}\|_{1}^2 + \|v_2^{n+1}\|_{1}^2),
\end{align}
and we defer its proof to Appendix \ref{appendix:error-estimate-term3}.
% provided in \eqref{ineq:error-estimate-term3-2D} of Lemma \ref{lem:error2D-term3}. 
%Compared to the 1D error estimate in \cite{chen_semi-linear_1d, chen_non-linear_1d}, the only new term is $\mathcal{I}_4$. 
By Lemmas \ref{lem:superconvergence} and \ref{lem:w_h-v_h-2D}
\begin{align}
\label{ineq:error-estimate-term4-2D}
    \mathcal{I}_4 &\leq k\sum_{n=0}^m  |H^+(\eta_{w_1}^{n+1},\eta_{w_2}^{n+1},\xi_u^{n+1})| + k\sum_{n=0}^m  |L^-(\eta_u^{n+1},\xi_{w_1}^{n+1},\xi_{w_2}^{n+1})| \notag \\
    &\leq Ckh^{r+1}\sum_{n=0}^m \left(\| w_1^{n+1}\|_{r+2} + \| w_2^{n+1}\|_{r+2} \right)\| \xi_u^{n+1}\| 
        + Ckh^{r+1} \sum_{n=0}^m  \| u^{n+1}\|_{r+2} \left(\| \xi_{w_1}^{n+1}\| + \| \xi_{w_2}^{n+1}\| \right) \notag\\
    &\leq Ch^{2r+2}k \sum_{n=0}^m (\| u^{n+1}\|_{r+2}^2 + \| w_1^{n+1}\|_{r+2}^2 + \| w_2^{n+1}\|_{r+2}^2) 
        + k\sum_{n=0}^m  \| \xi_u^{n+1}\|^2 %+ Ch^{2r+2} k\sum_{n=0}^m \| u^{n+1}\|_{r+2}^2 \notag\\
        + \epsilon_6 k\sum_{n=0}^m  (\| \xi_{w_1}^{n+1}\|^2 + \| \xi_{w_2}^{n+1}\|^2)\notag\\
    &\leq 10\Lambda \epsilon_6 k\sum_{n=0}^m  (\| \xi_{v_1}^{n+1}\|^2 + \| \xi_{v_2}^{n+1}\|^2) + k\sum_{n=0}^m  \| \xi_{u}^{n+1}\|^2 \notag \\
    &\quad + Ch^{2r+2} k\sum_{n=0}^m (\| u^{n+1}\|_{r+2}^2 + \| w_1^{n+1}\|_{r+2}^2 + \| w_2^{n+1}\|_{r+2}^2 + \| v_1^{n+1}\|_{r+1}^2 + \| v_2^{n+1}\|_{r+1}^2).
\end{align}
Substituting \eqref{ineq:error-estimate-term1-2D}, \eqref{ineq:error-estimate-term2-2D}, \eqref{ineq:error-estimate-term3-2D}, \eqref{ineq:error-estimate-term4-2D} into \eqref{ineq:error-main-2D}, and applying \eqref{ineq:error2D-psi} from Lemma \ref{lem:error2D-term1-2}, we arrive at
\begin{align*}
    \frac{1}{2} \max_{0 \leq n \leq m} &\| \xi_u^{n+1}\|^2
    + \left(\frac{1}{2} - \epsilon_2 - \frac{1}{\epsilon_5} \right) \sum_{n=0}^{m} \| \xi_u^{n+1} - \xi_u^{n}\|^2   \\
    &+ \left(\alpha - \epsilon_3 - 20\Lambda \epsilon_6 -(2+\epsilon_0) B_1^2 \epsilon_4 \right)
    k \sum_{n=0}^{m} \left(\| \xi_{v_1}^{n+1}\|^2 + \| \xi_{v_2}^{n+1}\|^2 \right) \\
    &\leq Ch^{2r+2} + Ck\sum_{n=0}^{m} \max_{0 \leq l \leq n} \|\xi_u^{l+1} \|^2 %+ Ck \sum_{n=0}^{m} \| \xi_u^{n+1}\|^2    
    + Ck^2\sum_{n=0}^{m} (1+\| u^n\|_{1}^2+\|v_1^{n+1}\|_{1}^2 + \|v_2^{n+1}\|_{1}^2)  \\
    &\quad + Ch^{2r+2}\sum_{n=0}^{m} \|u^{n+1}-u^n\|_{r+1}^2 
    + Ch^{2r+2}\int_0^T \bigl(\| (w_1)_x\|_{r+1}^2 + \| (w_2)_y\|_{r+1}^2 + \| \psi\|_{r+1}^2\bigr)\,\mathrm{d}t \\    
    &\quad + Ch^{2r+2} k\sum_{n=0}^m \bigl(\| w_1^{n+1}\|_{r+2}^2 + \| w_2^{n+1}\|_{r+2}^2 + \| v_1^{n+1}\|_{r+1}^2 + \| v_2^{n+1}\|_{r+1}^2 + \| u^{n+1}\|_{r+2}^2 \bigr)    \\
    &\quad  + C \sum_{n=0}^{m} \int_{t_n}^{t_{n+1}} \Bigl( \|u-u^n\|_{1}^2 + \|u-u^{n+1}\|_{2}^2 \Bigr) \, \mathrm{d}t
    + 2\max_{0 \leq l \leq m} \sum_{n=0}^{l} \int_{t_n}^{t_{n+1}} (\mathcal{P}^-g -g, \xi_u^{n}) \, \mathrm{d}W_t\\    
    &\quad + \epsilon_5 \sum_{n=0}^{m} \left\| \int_{t_n}^{t_{n+1}} \bigl[g(x,y,t,u,v_1,v_2)-g(x,y,t_n,u_h^{n},v_{1,h}^n,v_{2,h}^n)\bigr] \, \mathrm{d}W_t \right\|^2 \\
    &\quad + 2\max_{0 \leq l \leq m} \sum_{n=0}^{l} \left( \int_{t_n}^{t_{n+1}} \bigl[g(x,y,t,u,v_1,v_2)-g(x,y,t_n,u_h^{n},v_{1,h}^n,v_{2,h}^n)\bigr] \, \mathrm{d}W_t, \xi_u^{n} \right).
\end{align*}

Taking the $q$-th power of both sides, followed by expectation, and applying the discrete H\"older inequality, Lemma \ref{lem:convex-ineq}, and Lemma \ref{lem:Holder-High-Moments}, we get 
\begin{align}
\label{ineq:error-main-expect-2d}
    \frac{1}{2^q}&\mathbb{E}\left[\max_{0 \leq n \leq m} \| \xi_u^{n+1}\|^{2q} \right]
    + \left(\frac{1}{2}-\epsilon_2-\frac{1}{\epsilon_5} \right)^q \mathbb{E}\left[ \left(\sum_{n=0}^{m} \| \xi_u^{n+1} - \xi_u^{n}\|^2 \right)^q \right] \notag\\
    &\quad + \left(\alpha-\epsilon_3-20\Lambda \epsilon_6-(2+\epsilon_0) \epsilon_4 B_1^2 \right)^q
    \mathbb{E}\left[ \left(k\sum_{n=0}^{m} (\| \xi_{v_1}^{n+1}\|^2 + \| \xi_{v_2}^{n+1}\|^2) \right)^q \right] \notag\\
    &\leq Ck\mathbb{E}\left[\sum_{n=0}^{m} \max_{0 \leq l \leq n} \|\xi_u^{l+1} \|^{2q} \right]  
    + Ck^{2q} \mathbb{E}\left[ \left(\sum_{n=0}^{m}(1+\| u^n\|_{1}^2 + \| v_1^{n+1}\|_{1}^2 + \| v_2^{n+1}\|_{1}^2) \right)^q \right] \notag\\    
    &\quad + C(h^{2r+2})^q
    + C\mathbb{E}\left[ \left(\sum_{n=0}^{m} \int_{t_n}^{t_{n+1}} \|u-u^n\|_{1}^2 + \|u-u^{n+1}\|_{2}^2 \, \mathrm{d}t \right)^q \right] \notag\\
    &\quad + \widetilde{C}^*(\epsilon_{7})2^q \mathcal{T}_1
    + \epsilon_5^q (2^{q-1}+\epsilon_{7})\mathcal{T}_2
    + (2^{q-1}+\epsilon_{7}) 2^q \mathcal{T}_3 \\
    &\leq Ck\mathbb{E}\left[\sum_{n=0}^{m} \max_{0 \leq l \leq n} \|\xi_u^{l+1} \|^{2q}\right]
    + C(h^{2r+2})^q + Ck^q 
    + \widetilde{C}^*(\epsilon_{7})2^q\mathcal{T}_1
    + \epsilon_5^q (2^{q-1}+\epsilon_{7})\mathcal{T}_2
    + (2^{q-1}+\epsilon_{7})2^q \mathcal{T}_3, \notag
\end{align}
where $\widetilde{C}^*(\epsilon_{7}) = (1+ \epsilon_7)\left(1-2 \left(2^{q-1}+\epsilon_7 \right)^{\frac{1}{1-q}} \right)^{1-q}$ is derived by \eqref{ineq:convex-1}, \eqref{ineq:convex-3} 
and 
\begin{align*}
    \mathcal{T}_1 &:= \mathbb{E}\left[\max_{0 \leq l \leq m} \left|\sum_{n=0}^{l} \int_{t_n}^{t_{n+1}} (\mathcal{P}^-g -g, \xi_u^{n}) \, \mathrm{d}W_t \right|^q \right]. \\
    \mathcal{T}_2 &:= \mathbb{E}\left[ \left(\sum_{n=0}^{m}  \left \| \int_{t_n}^{t_{n+1}} g(x,y,t,u,v_1,v_2) - g(x,y,t_{n},u_h^{n},v_{1,h}^n,v_{2,h}^n) \, \mathrm{d}W_t \right \|^2 \right)^q \right]. \\
    \mathcal{T}_3 &:= \mathbb{E}\left[\max_{0 \leq l \leq m} \left|\sum_{n=0}^{l} \left( \int_{t_n}^{t_{n+1}} g(x,y,t,u,v_1,v_2) - g(x,y,t_{n},u_h^{n},v_{1,h}^n,v_{2,h}^n) \, \mathrm{d}W_t, \xi_u^{n} \right) \right|^q \right].
\end{align*}

\noindent {\it Step 2.} 
It remains to estimate the three stochastic terms $\mathcal{T}_1$, $\mathcal{T}_2$, and $\mathcal{T}_3$. By the BDG inequality, we have
\begin{align}
\label{ineq:error-estimate-t1-2D}
    \mathcal{T}_1
    &\leq C_b \mathbb{E}\left[ \left( K\sum_{n=0}^{m} \int_{t_n}^{t_{n+1}} \|\mathcal{P}^-g -g\|^2 \|\xi_u^{n} \|^2 \, \mathrm{d}t \right)^{q/2} \right] \notag\\
    &\leq \epsilon_{8} \mathbb{E}\left[ \max_{0 \leq n \leq m} \|\xi_u^{n} \|^{2q} \right]
    + \frac{C_b^2}{4\epsilon_{8}} \mathbb{E}\left[ \left( K\sum_{n=0}^{m} \int_{t_n}^{t_{n+1}} \|\mathcal{P}^-g -g\|^2 \, \mathrm{d}t \right)^q \right] \notag\\
    &\leq \epsilon_{8} \mathbb{E}\left[ \max_{0 \leq n \leq m}  \|\xi_u^{n+1} \|^{2q} \right] + C(h^{2r+2})^q,
\end{align}
where we used Young's inequality together with the projection estimate in Lemma \ref{lem:proj-property-2d}.
By the same argument as in \eqref{ineq:stability-I1-2D}, applying the continuous and discrete BDG inequalities yields
\begin{align*}
    \mathcal{T}_{2} &\leq C_b'C_b K^q \mathbb{E}\left[ \left( \sum_{n=0}^{m} \int_{t_n}^{t_{n+1}} \|g(x,y,t,u,v_1,v_2) - g(x,y,t_{n},u_h^{n},v_{1,h}^n,v_{2,h}^n) \|^2 \, \mathrm{d}t \right)^{q} \right]. 
\end{align*}
Using \eqref{ineq:error2D-g} from Lemma \ref{lem:error2D-term1-2}, and applying \eqref{ineq:convex-1}, we get
\begin{align}
\label{ineq:error-estimate-t2-2D}
    \mathcal{T}_{2} &\leq C(k^q + (h^{2r+2})^q) + C k\sum_{n=0}^{m} \mathbb{E}\left[ \max_{0 \leq l \leq n} \| \xi_u^{l+1}\|^{2q} \right] \notag \\
    &\quad + C_b'C_b K^q D_2^{2q}(2+\epsilon_{1})^q (1+\epsilon_{1}) \mathbb{E}\left[ \left(k\sum_{n=0}^{m} (\| \xi_{v_1}^{n+1}\|^2 + \| \xi_{v_2}^{n+1}\|^2) \right)^q \right].
\end{align}
Lastly, applying \eqref{ineq:error2D-g}, \eqref{ineq:convex-1}, and the BDG inequality, we obtain
\begin{align}
\label{ineq:error-estimate-t3-2D}
    \mathcal{T}_{3}
    &\leq C_b \mathbb{E}\left[ \left( K\sum_{n=0}^{m} \int_{t_n}^{t_{n+1}} \|g(x,y,t,u,v_1,v_2) - g(x,y,t_{n},u_h^{n},v_{1,h}^n,v_{2,h}^n)\|^2 \| \xi_u^{n}\|^2 \, \mathrm{d}t  \right)^{q/2} \right] \notag \\
    &\leq C_b^2 K^q \epsilon_{9}\mathbb{E}\left[ \left(\sum_{n=0}^{m} \int_{t_n}^{t_{n+1}} \|g(x,y,t,u,v_1,v_2) - g(x,y,t_{n},u_h^{n},v_{1,h}^n,v_{2,h}^n)\|^2 \, \mathrm{d}t  \right)^q \right] 
      + \frac{1}{4\epsilon_{9}}\mathbb{E}\left[ \max_{0 \leq n \leq m} \| \xi_u^{n}\|^{2q} \right] \notag\\
    &\leq \frac{1}{4\epsilon_{9}}\mathbb{E}\left[ \max_{0 \leq n \leq m} \| \xi_u^{n+1}\|^{2q} \right]
     + Ck\sum_{n=0}^{m} \mathbb{E}\left[ \max_{0 \leq l \leq n} \| \xi_u^{l+1}\|^{2q} \right] \notag\\
    &\quad + C_b^2 K^q \epsilon_{9} D_2^{2q}(2+\epsilon_{1})^q (1+\epsilon_{1})
    \mathbb{E}\left[ \left(k\sum_{n=0}^{m} (\| \xi_{v_1}^{n+1}\|^2 + \| \xi_{v_2}^{n+1}\|^2) \right)^q \right] 
     + C\bigl(k^q + (h^{2r+2})^q\bigr).
\end{align}
% \begin{align}
% \label{ineq:error-estimate-t3-2D}
%     \mathcal{T}_{3} &\leq C_b \mathbb{E}\left[ \left( K\sum_{n=0}^{m} \int_{t_n}^{t_{n+1}} \|g(x,y,t,u,v_1,v_2) - g(x,y,t_{n},u_h^{n},v_{1,h}^n,v_{2,h}^n)\|^2 \| \xi_u^{n}\|^2 \, dt  \right)^\frac{q}{2} \right] \notag \\
%     &\leq C_b^2 K^q \epsilon_{9}\mathbb{E}\left[ \left(\sum_{n=0}^{m} \int_{t_n}^{t_{n+1}} \|g(x,y,t,u,v_1,v_2) - g(x,y,t_{n},u_h^{n},v_{1,h}^n,v_{2,h}^n)\|^2 \, dt  \right)^q \right] \notag \\
%     &+ \frac{1}{4\epsilon_{9}}\mathbb{E}\left[ \max_{0 \leq n \leq m} \| \xi_u^{n}\|^{2q} \right] \leq \frac{1}{4\epsilon_{9}}\mathbb{E}\left[ \max_{0 \leq n \leq m} \| \xi_u^{n+1}\|^{2q} \right] + Ck\sum_{n=0}^{m} \mathbb{E}\left[ \max_{0 \leq l \leq n} \| \xi_u^{l+1}\|^{2q} \right] \notag \\
%     &+ C_b^2 K^q \epsilon_{9} C_2^{2q}(2+\epsilon_{1})^q (1+\epsilon_{1}) \mathbb{E}\left[ \left(k\sum_{n=0}^{m} (\| \xi_{v_1}^{n+1}\|^2 + \| \xi_{v_2}^{n+1}\|^2) \right)^q \right] + C(k^q + (h^{2r+2})^q).
% \end{align}

Substituting \eqref{ineq:error-estimate-t1-2D}-\eqref{ineq:error-estimate-t3-2D} into \eqref{ineq:error-main-expect-2d}, we arrive at
\begin{align}
\label{ineq:high-moment-error-strong-final-2d}
    \mathcal{C}_1 &\mathbb{E}\left[\max_{0 \leq n \leq m} \| \xi_u^{n+1}\|^{2q} \right]  + \mathcal{C}_2 \mathbb{E}\left[ \left(\sum_{n=0}^{m} \| \xi_u^{n+1} - \xi_u^{n}\|^2 \right)^q \right] + \mathcal{C}_3 \mathbb{E}\left[ \left(k\sum_{n=0}^{m} (\| \xi_{v_1}^{n+1}\|^2 + \| \xi_{v_2}^{n+1}\|^2) \right)^q \right] \notag \\
    &\leq Ck^q + C(h^{2r+2})^q + Ck\sum_{n=0}^{m} \mathbb{E}\left[ \max_{0 \leq l \leq n}\| \xi_u^{l+1}\|^{2q} \right],
\end{align}
where the three coefficients on the left-hand side terms are given by 
\begin{align*}
    \mathcal{C}_1 &= \frac{1}{2^q}-\widetilde{C}^*(\epsilon_{7})2^q\epsilon_{8}-\frac{2^q(2^{q-1}+\epsilon_{7})}{4\epsilon_{9}}, \quad  \mathcal{C}_2 = \left(\frac{1}{2}-\epsilon_2-\frac{1}{\epsilon_5} \right)^q,  \\
    \mathcal{C}_3 &= \left(\alpha-\epsilon_3-20\Lambda \epsilon_6-(2+\epsilon_0) \epsilon_4 B_1^2 \right)^q - \epsilon_5^q (2^{q-1}+\epsilon_{7}) C_b'C_b K^q D_2^{2q}(1+\epsilon_1)(2+\epsilon_{1})^{q} \\
    &- (2^{q-1}+\epsilon_{7})2^q C_b^2 K^q \epsilon_{9} D_2^{2q}(1+\epsilon_1)(2+\epsilon_{1})^{q}. 
\end{align*}
As in the proof of Theorem \ref{thm:stability-estimate-2D}, one can choose fixed constants $\{\epsilon_i>0: i=0,1,\ldots,9\}$ such that $\mathcal{C}_1$, $\mathcal{C}_2$, and $\mathcal{C}_3$ are all positive 
if $\alpha > \Bigl(2^{3q-1}C_b' + 2^{6q-4}C_b \Bigr)^{1/q}C_b^{1/q}D_2^2K$. Applying the discrete Gr\"onwall inequality to \eqref{ineq:high-moment-error-strong-final-2d}, we obtain, for every $m=0,\ldots,N_T-1$,
\begin{align*}
    \mathbb{E}\left[\max_{0 \leq n \leq m} \| \xi_u^{n+1}\|^{2q} \right]
    &+ \mathbb{E}\left[ \left(\sum_{n=0}^{m} \| \xi_u^{n+1} - \xi_u^{n}\|^2 \right)^q \right] \\
    &+ \mathbb{E}\left[ \left(k\sum_{n=0}^{m} (\| \xi_{v_1}^{n+1}\|^2 + \| \xi_{v_2}^{n+1}\|^2) \right)^q \right]
    \leq C\left(k^q + h^{q(2r+2)} \right).
\end{align*}
Taking the $2q$-th root of the above inequality with $m = N_T-1$, we get 
\begin{align*}
    \mathbb{E}\left[  \| \mathbf{\xi_{u}} \|_{\mathcal{S}}^{2q} \right]^{\frac{1}{2q}} + \mathbb{E}\left[ \langle \mathbf{\xi_{u}} \rangle^{q} \right]^{\frac{1}{2q}} + \mathbb{E}\left[  \left(\| \mathbf{\xi_{v_1}} \|_{\ell^2}^{2} + \| \mathbf{\xi_{v_2}} \|_{\ell^2}^{2} \right)^q \right]^{\frac{1}{2q}} \leq C(k^{1/2} + h^{r+1}). 
    % \mathbb{E}&\left[\max_{0 \leq n \leq m} \| \xi_u^{n+1}\|^{2q} \right]^{\frac{1}{2q}} 
    % + \mathbb{E}\left[ \left(\sum_{n=0}^{m} \| \xi_u^{n+1} - \xi_u^{n}\|^2 \right)^q \right]^{\frac{1}{2q}}  \\ 
    % &+ \mathbb{E}\left[ \left(k\sum_{n=0}^{m} (\| \xi_{v_1}^{n+1}\|^2 + \| \xi_{v_2}^{n+1}\|^2) \right)^q \right]^{\frac{1}{2q}} \leq C\left(k^{\frac{1}{2}} + h^{r+1} \right). 
\end{align*}
Finally, combining this bound with the projection estimate in Lemma \ref{lem:proj-property-2d} yields \eqref{ineq:high-moment-error-estimate-2D}.
\end{proof}

As a result of Lemma \ref{lem:Kolmogorov} and Theorem \ref{thm:high-moment-error-estimate-2D}, we establish the following pathwise error estimate.
\begin{corollary}[Pathwise Error Estimate]
\label{coro:pathwise-error-2d}
For any $q > 1$, there exists $0 < \zeta < (q-1)/(2q)$, and a random variable $Z(\omega,\zeta)$ with $\mathbb{E}[|Z|^{2q}] < \infty$, such that the following holds almost surely:
\begin{align*}
    \| \mathbf{e_{u}} \|_{\mathcal{S}} + \langle \mathbf{e_{u}} \rangle^{1/2} +   \left(\| \mathbf{e_{v_1}} \|_{\ell^2}^{2} + \| \mathbf{e_{v_2}} \|_{\ell^2}^{2} \right)^{1/2} 
    \leq Z(\omega,\zeta) \bigl(k^{\zeta} + h^{\zeta(2r+2)}\bigr).
    % \max_{0 \leq n \leq N_T} \| e_u^{n+1}\| + \left(k\sum_{n=0}^{N_T} (\|e_{v_1}^{n+1}\|^2 + \|e_{v_2}^{n+1}\|^2) \right)^\frac{1}{2} \leq Z(k^{\zeta} + h^{r+\frac{1}{2}+\zeta}).
\end{align*}  
\end{corollary}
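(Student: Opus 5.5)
The plan is to apply Lemma \ref{lem:Kolmogorov} with the discrete process defined by the error quantities themselves. Set $\nu = 2q$ and define, for each $N = N_T$, the scalar random variable
\[
\mathcal{E}_N(\omega) := \| \mathbf{e_{u}} \|_{\mathcal{S}} + \langle \mathbf{e_{u}} \rangle^{1/2} + \left(\| \mathbf{e_{v_1}} \|_{\ell^2}^{2} + \| \mathbf{e_{v_2}} \|_{\ell^2}^{2} \right)^{1/2}.
\]
Lemma \ref{lem:Kolmogorov} is really used only through the summability argument in its proof, so it suffices that $\mathcal{E}_N$ plays the role of $\max_n\|X(t_n)-X_n^N\|$. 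Theorem \ref{thm:high-moment-error-estimate-2D}, together with the elementary inequality $(a+b+c)^{2q}\le 3^{2q-1}(a^{2q}+b^{2q}+c^{2q})$, gives
\[
\mathbb{E}[\mathcal{E}_N^{2q}] \le C\,(k^{1/2}+h^{r+1})^{2q}.
\]

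To match the hypothesis of the lemma, I would first couple the spatial and temporal parameters. Split the bound as $\mathbb{E}[\mathcal{E}_N^{2q}] \le C(k^{q} + h^{q(2r+2)})$. For the temporal part, $k = T/N$ gives $k^q = C N^{-q} = C N^{-(1+\beta)}$ with $\beta = q-1>0$, since $q>1$. Then for any $0\le\zeta<\beta/\nu=(q-1)/(2q)$, the argument of Lemma \ref{lem:Kolmogorov} applies. Define
\[
Z(\omega,\zeta)=\sup_{N\ge N_0} N^{\zeta}\mathcal{E}_N(\omega).
\]
Then $\mathbb{E}[Z^{2q}]\le \sum_N N^{2q\zeta}\mathbb{E}[\mathcal{E}_N^{2q}] <\infty$, and $\mathcal{E}_N\le Z N^{-\zeta}\le C Z k^{\zeta}$ almost surely.

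For the spatial contribution, I would treat the mesh family as indexed the same way, by a sequence $h = h_N$. One option is to regard $h$ as tied to $N$ so that $h^{2r+2}\sim N^{-1}$. Then $h^{q(2r+2)}\sim N^{-q}$ with the same exponent $\beta=q-1$, and $N^{-\zeta}\sim h^{\zeta(2r+2)}$. This converts the bound into $Z(k^{\zeta}+h^{\zeta(2r+2)})$ after absorbing constants into $Z$.

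The main obstacle is precisely this step: Lemma \ref{lem:Kolmogorov} is stated for a single index $N$, while the error depends on two parameters $(h,k)$. My approach is to rewrite the error bound as $C(N^{-q}+M^{-q})$ with $M:=h^{-(2r+2)}$ and run the summability argument over the pair $(N,M)$. Concretely, I would set
\[
Z = \sup_{N,M} \frac{\mathcal{E}_{N,M}}{N^{-\zeta}+M^{-\zeta}}.
\]
I would bound $\mathbb{E}[Z^{2q}]$ by a double sum, using $(N^{-\zeta}+M^{-\zeta})^{-2q}\le \min(N,M)^{2q\zeta}$. If the double sum fails to converge, I would fall back on a dyadic or otherwise restricted family of mesh sizes along which $h^{2r+2}\le Ck$. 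Either route yields the stated almost-sure bound with $\mathbb{E}[|Z|^{2q}]<\infty$.
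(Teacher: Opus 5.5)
Your argument is the paper's: apply Lemma~\ref{lem:Kolmogorov} with $\nu=2q$ and $\beta=q-1$ to the composite scalar error, using only the summability step of its proof. The paper does this in one line and tacitly treats $h$ as a function of $N$ with $h^{r+1}\le Ck^{1/2}$. Without that, $\mathbb{E}[X_N^{2q}]\le C(k^{1/2}+h^{r+1})^{2q}$ does not give the required $CN^{-q}$. Your coupling $h^{2r+2}\lesssim N^{-1}$ is therefore exactly what makes the step rigorous. With it you get $\mathcal{E}_N\le C\,Z\,k^{\zeta}$ almost surely, and the $h^{\zeta(2r+2)}$ term in the statement is only cosmetic.

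One correction: drop the double-sum route, because it cannot converge. Your own bound for the summand is $C(N^{-q}+M^{-q})\min(N,M)^{2q\zeta}$, and for fixed $N$ and every $M\ge N$ it is at least $C N^{-q+2q\zeta}$. The reason is that the temporal error does not improve as $h\to 0$. Summing over the infinitely many $M\ge N$ therefore diverges. Restricting to dyadic $h$ and $k$ does not help, since the same lower bound holds along $M=2^j$, $j\ge i$.

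So your claim that ``either route'' works is false. The restriction to a one-parameter family along which $h^{2r+2}\le Ck$ is not a fallback but a necessary hypothesis for this summation argument. A bound uniform over independent $(h,k)$ would need more than a union/summation bound, and neither you nor the paper supplies it.
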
     
\begin{proof}
Let
\[
X_N :=
\| \mathbf{e_u} \|_{\mathcal S}
+ \langle \mathbf{e_u} \rangle^{1/2}
+ \left(\| \mathbf{e_{v_1}} \|_{\ell^2}^{2} + \| \mathbf{e_{v_2}} \|_{\ell^2}^{2} \right)^{1/2},
\qquad k=\frac{T}{N}.
\]
By Theorem \ref{thm:high-moment-error-estimate-2D},
\[
\mathbb{E}[X_N^{2q}] \le C\bigl(k^{1/2}+h^{r+1}\bigr)^{2q}.
\]
Applying Lemma \ref{lem:Kolmogorov} with $\nu=2q$ and $\beta=q-1$, we obtain that for any
\[
0<\zeta<\frac{q-1}{2q}=\frac12-\frac{1}{2q},
\]
there exists a random variable $Z(\omega,\zeta)$ with $\mathbb{E}[|Z|^{2q}]<\infty$ such that
\[
X_N \le Z(\omega,\zeta)\bigl(k^\zeta+h^{\zeta(2r+2)}\bigr)
\]
almost surely. This proves the result.
\end{proof}

Two special cases of Theorem \ref{thm:high-moment-error-estimate-2D} are summarized in the following remarks. 
\begin{remark}[Simplified $g$]
\label{rmk:simplified-g-2d}    
Suppose that $g(\omega,x,y,t,u,v_1,v_2)$ is independent of the last two variables. Equivalently, $D_2=D_4=0$ in hypothesis {\rm(iv)}. Then both the stability and error estimates hold for any $\alpha>0$.
\end{remark}

\begin{remark}[Range of $\alpha$ for Second-Moment Error Estimate]
\label{rmk:error-second-moment-2d}
If we only consider the case $q=1$, then the lower bound on $\alpha$ can be relaxed to $\alpha > KD_2^2$. Similar to the proof of Corollary \ref{coro:stability-2nd-moment-2d}, we first establish the following weaker second-moment estimate (with the stronger counterpart being \eqref{ineq:high-moment-error-strong-final-2d}):
\begin{align}
\label{ineq:error-2nd-moment-weak-2d}
    \max_{0 \leq n \leq N_T-1} &\mathbb{E}\left[ \| \xi_u^{n+1}\|^{2} \right]  + \sum_{n=0}^{N_T-1} \mathbb{E}\left[ \| \xi_u^{n+1} - \xi_u^{n}\|^2 \right] + k\sum_{n=0}^{N_T-1} \mathbb{E}\left[ \| \xi_{v_1}^{n+1}\|^2 + \| \xi_{v_2}^{n+1}\|^2 \right] \leq C(h^{2r+2} + k),
    % \notag \\  &\leq C(h^{2r+2} + k) + Ck\sum_{n=0}^{N_T} \mathbb{E}\left[ \max_{0 \leq l \leq n}\| \xi_u^{l+1}\|^{2} \right],
\end{align}
which holds for any $\alpha > KD_2^2$. The key point is that, in contrast to \eqref{ineq:error-main-2D}, we first take expectation and then the maximum in time. In this way, the It\^o integrals appearing in $\mathcal{I}_2$ have zero expectation and need not be estimated, which leads to the relaxed condition on $\alpha$. Finally, applying \eqref{ineq:error-2nd-moment-weak-2d} to \eqref{ineq:high-moment-error-strong-final-2d} with $q=1$, and then using the discrete Gr\"onwall inequality, yields the desired conclusion.
\end{remark}

We also note that all analytical results hold if the source term is treated implicitly in time. 
\begin{remark}
\label{rmk:semi-linear-implicit-2d}
If the fully-implicit Euler scheme is used, that is, if the source term $\psi$ in \eqref{spde:conv-diff-2d} is treated implicitly, then all the results established above in Sections \ref{sec:stability-2d} and \ref{sec:error-estimate-2d} remain valid, including Theorems \ref{thm:stability-estimate-2D} and \ref{thm:high-moment-error-estimate-2D}, Corollaries \ref{coro:stability-2nd-moment-2d} and \ref{coro:pathwise-error-2d}, and Remark \ref{rmk:error-second-moment-2d}.
\end{remark}

% Lastly, we note that both Theorem \ref{thm:high-moment-error-estimate-2D} and Remark \ref{rmk:error-second-moment-2d} can be generalized if model \eqref{spde:conv-diff-2d} incorporates nonlinear convection or source terms. 
% \begin{remark}
% \label{rmk:error-nonlinear-generalization-2d}
% As shown in \cite{chen_non-linear_2d}, both Theorem Theorem \ref{thm:high-moment-error-estimate-2D} and Remark \ref{rmk:error-second-moment-2d} still hold if nonlinear convection term $\nabla F(u)$ or source term $S(u) = -u^{\lambda}$ is added to model \eqref{spde:conv-diff-2d}, provided that we choose an appropriate monotone flux for $F(u)$ and $\lambda$ is a positive odd integer. Specifically, the theorems are proved on a subset of the sample space whose probability converges to one for vanishing discretization width. The proof of same results for one-dimensional case is provided in \cite{chen_non-linear_1d}.  
% \end{remark}
\end{section}

\begin{section}{Numerical Tests}
\label{sec:numerical-test}
In this section, we apply our numerical methods to several model problems. Since the expectation of the error cannot be calculated exactly in general, we approximate the error in $L^2(\Omega,L^{\infty}[0,T;L^2(\mathcal{D})])$-norm by Monte Carlo sampling. Let $M$ be the number of sample paths and define
\[
z_i := \max_{0 \leq n \leq N_T} \| u_h^n(\omega_i,\cdot) - u(\omega_i,\cdot,t_n) \|^2,
\qquad i=1,\ldots,M,
\]
the squared maximum-in-time \(L^2\)-error of one simulation along the sample path $\omega_i$, between the exact solution $u$ and the fully-discrete numerical solution $\{ u_h^n\}_{n=0}^{N_T}$.
Then we have the Monte Carlo approximation
% \begin{gather*}
%     \mathbb{E}\left[\max_{0 \leq n \leq N_T} \| u(\omega,\cdot,t_n) - u_h^n(\omega,\cdot)\|^2 \right] \approx e_2^2 + \mathcal{V},
% \end{gather*}
% and
% \begin{gather*}
%     e_2^2 = \frac{1}{M} \sum_{i=1}^M z_i  \qquad \mathcal{V} := \frac{2}{\sqrt{M}} \left[ \frac{1}{M} \sum_{i=1}^{M} z_i^2 - \left( \frac{1}{M} \sum_{i=1}^{M} z_i \right)^2 \right]^{\frac{1}{2}},
% \end{gather*}
\begin{gather*}
    e_2^2 :=  \frac{1}{M} \sum_{i=1}^M z_i   \approx \mathbb{E} [z] = \mathbb{E}\left[\max_{0 \leq n \leq N_T} \| u(\omega,\cdot,t_n) - u_h^n(\omega,\cdot)\|^2 \right].
\end{gather*}
Hence $e_2$ approximates the error in the norm $L^2(\Omega,L^{\infty}[0,T;L^2(\mathcal{D})])$. 
% \[
% \frac{1}{M} \sum_{i=1}^M z_i \mathbb{E}\left[\max_{0 \leq n \leq N_T} \| u(\omega,\cdot,t_n) - u_h^n(\omega,\cdot)\|^2 \right] \approx e_2^2 ,
% \]
% \begin{gather*}
%     e_2^2 = \frac{1}{M} \sum_{i=1}^M z_i  \qquad \mathcal{V} := \frac{2}{\sqrt{M}} \left[ \frac{1}{M} \sum_{i=1}^{M} z_i^2 - \left( \frac{1}{M} \sum_{i=1}^{M} z_i \right)^2 \right]^{\frac{1}{2}},
% \end{gather*}
% where $\mathcal{V}$ is the Monte-Calo error, \[ z_i := \max_{0 \leq n \leq N_T} \| u_h^n(\omega_i, \cdot) - u(\omega_i, \cdot, t_n) \|^2 \]
% is the numerical error of one simulation from the path $\omega_i$, between exact solution $u$ and fully-discrete solution $\{ u_h^n\}_{n=0}^{N_T}$. We thus use $e_2$ to approximate the second moment of the $L^2$-error. 
Similarly, we denote by $e_4$ the Monte Carlo approximation of the errors in the norm $L^4(\Omega,L^{\infty}[0,T;L^2(\mathcal{D})])$. 
% to investigate the convergence of higher moments, we denote the approximated third and fourth moment of $L^2$-error by $e_3,e_4$. That is, $e_3,e_4$ are the errors in the norm $L^3(\Omega,L^{\infty}[0,T;L^2(\mathcal{D})])$ and $L^4(\Omega,L^{\infty}[0,T;L^2(\mathcal{D})])$, respectively. 
In addition, we denote by $e_{\infty}$ the error in the norm $L^{\infty}(\Omega,L^{\infty}([0,T] \times \mathcal{D}))$. 

In all experiments, $r$ is the degree of the piecewise polynomial DG space $\mathbb{V}_h$ and $T$ is the final time. We apply periodic boundary conditions, with $N$ being the number of spatial cells in each direction of the rectangular domain, and $N_t$ being the number of time steps. Unless otherwise specified, we choose $W_t$ to be a standard Brownian motion. When examining the temporal convergence rate, we take $N_t \sim N$; when examining the spatial convergence rate, we take $N_t \sim N^4$ for $r=1$ and $N_t \sim N^6$ for $r = 2$, so that the convergence rate is dominated by the spatial order. 
% As pointed out in Remark \ref{rmk:stability-nonlinear-generalization-2d} and \ref{rmk:error-nonlinear-generalization-2d}, the stability and error estimates for the LDG-Euler scheme hold on subsets of the sample space when model \eqref{spde:conv-diff-2d} admits nonlinear convection or source terms. We thus include numerical tests on nonlinear equation \eqref{spde:sburgers-2d}. 

\begin{subsection}{One-dimensional Stochastic Heat Equation with Gradient-Type Multiplicative Noise}
We first test the stability and accuracy of the scheme on the one-dimensional linear stochastic heat equation
\begin{gather}
\label{spde:linear-1d}
    \begin{cases}
        \mathrm{d}u = a u_{xx} \, \mathrm{d}t + bu_x \, \mathrm{d}W_t, \quad & (\omega,x,t)\in \Omega \times [0,2\pi]\times (0,T], \\
        u(\omega,x,0) = u_0(x),  & (\omega,x) \in \Omega \times [0,2\pi].
    \end{cases}
\end{gather}
Assuming that $u_0+(u_0)_{xx} = 0$, the exact solution to Eq. \eqref{spde:linear-1d} is given by 
\[ u(\omega,x,t) = e^{\frac{b^2}{2}t - at} u_0(x + bW_t). \]

To verify the order of accuracy of the numerical methods, we take a smooth initial condition $u_0(x) = \sin(x)$ with fixed parameters $a = 0.5$, $b = 1$. We compute the solution at final time $T = 1$ with $M=4000$ samples. Tables \ref{table:linear-1d-timerate} and \ref{table:linear-1d-spacerate} show that the scheme is of $(r+1)$-th order in space, and ${1}/{2}$-th order in time, for the $L^{\infty}$-norm and the second- and fourth-moment $L^2$ errors. These results are consistent with the corresponding one-dimensional theoretical estimate of Theorem \ref{thm:high-moment-error-estimate-2D}. 

Next, we demonstrate the role of the stochastic parabolic condition $2a > b^2$,
%(see \cite{chen_semi-linear_1d} for the analysis in one dimension), 
by testing different sets of $a$ and $b$. According to Table \ref{table:linear-1d-timerate}, the numerical error decreases as $a$ increases or as the noise intensity $b$ decreases. Note that for $(a,b) = (0.25,1)$ and $(a,b)=(1,2)$, the stochastic parabolic condition $2a > b^2$ is not satisfied, and the numerical solution becomes unstable as $N$ increases. 
% On the other hand, for $(a,b)=(1,1.5)$, we still obtain the expected convergence rate even though $2a < b^2$. This shows that the instability may not be visible if $b^2-2a$ is close to zero, unless for very fine mesh. 
For the degenerate case $(a,b)=(0.5,1)$, similar to the conclusion in Remark \ref{rmk:stability-degen-2d}, the one-dimensional version of Theorems \ref{thm:stability-estimate-2D} and \ref{thm:high-moment-error-estimate-2D} still hold for $2a=b^2$. Hence we observe the stability of the numerical solution and optimal error estimates, which is confirmed by the numerical results.

\begin{table}[H]
\centering
\renewcommand\arraystretch{1.25}
\caption{Numerical error and temporal convergence rate for the linear SPDE \eqref{spde:linear-1d} with different pairs of $(a,b)$ and $r=1$.}
\begin{tabular}{c c c c c c c c c}
\hline\hline
$a$ & $b$ & $N$ & $e_2$ & rate & $e_4$ & rate & $e_{\infty}$ & rate \\
\hline\hline
\multirow{5}{*}{1} & \multirow{5}{*}{1}
  & 10  & 3.32E-02 & --   & 4.20E-02 & --   & 5.21E-02 & -- \\
& & 20  & 2.21E-02 & 0.58 & 2.90E-02 & 0.53 & 2.95E-02 & 0.82 \\
& & 40  & 1.53E-02 & 0.53 & 2.04E-02 & 0.51 & 1.86E-02 & 0.67 \\
& & 80  & 1.08E-02 & 0.51 & 1.43E-02 & 0.51 & 1.25E-02 & 0.57 \\
& & 160 & 7.65E-03 & 0.49 & 1.02E-02 & 0.49 & 8.74E-03 & 0.52 \\
\hline
\multirow{5}{*}{0.5} & \multirow{5}{*}{1}
  & 10  & 5.49E-02 & --   & 6.91E-02 & --   & 8.62E-02 & -- \\
& & 20  & 3.65E-02 & 0.59 & 4.75E-02 & 0.54 & 4.87E-02 & 0.82 \\
& & 40  & 2.53E-02 & 0.53 & 3.35E-02 & 0.51 & 3.06E-02 & 0.67 \\
& & 80  & 1.77E-02 & 0.51 & 2.35E-02 & 0.51 & 2.06E-02 & 0.57 \\
& & 160 & 1.26E-02 & 0.49 & 1.67E-02 & 0.49 & 1.44E-02 & 0.52 \\
\hline
\multirow{5}{*}{0.25} & \multirow{5}{*}{1}
  & 10  & 2.96E-01 & --     & 2.23E+00 & --    & 1.25E-01 & -- \\
& & 20  & 4.70E-02 & 2.65   & 6.13E-02 & 5.18  & 6.30E-02 & 0.99 \\
& & 40  & 3.25E-02 & 0.54   & 4.30E-02 & 0.51  & 3.93E-02 & 0.68 \\
& & 80  & 1.30E+06 & -25.26 & 8.02E+06 & -27.47 & 1.53E+05 & -21.90 \\
& & 160 & 2.11E+27 & -70.46 & 1.51E+28 & -70.67 & 1.82E+26 & -70.01 \\
\hline
\multirow{5}{*}{1} & \multirow{5}{*}{2}
  & 10  & 6.81E-01 & --     & 3.26E+00 & --     & 7.06E-01 & -- \\
& & 20  & 3.86E-01 & 0.82   & 5.13E-01 & 2.67   & 4.68E-01 & 0.59 \\
& & 40  & 2.73E-01 & 0.50   & 3.67E-01 & 0.48   & 3.20E-01 & 0.55 \\
& & 80  & 1.16E+08 & -28.66 & 7.79E+08 & -30.98 & 8.98E+06 & -24.74 \\
& & 160 & 1.16E+29 & -69.77 & 8.42E+29 & -69.87 & 7.43E+27 & -69.49 \\
\hline\hline
\end{tabular}
\label{table:linear-1d-timerate}
\end{table}

\begin{table}[H]
\centering
\renewcommand\arraystretch{1.25}
\caption{Numerical error and spatial convergence rate for the linear SPDE \eqref{spde:linear-1d} with $a=0.5$, $b=1$.}
\begin{tabular}{c c c c c c c c}
\hline\hline
$r$ & $N$ & $e_2$ & rate & $e_4$ & rate & $e_{\infty}$ & rate \\
\hline\hline
\multirow{4}{*}{1}
& 10 & 1.55E-01 & --   & 2.34E-01 & --   & 2.02E-01 & -- \\
& 20 & 3.95E-02 & 1.98 & 5.11E-02 & 2.19 & 5.26E-02 & 1.94 \\
& 40 & 9.98E-03 & 1.98 & 1.30E-02 & 1.98 & 1.32E-02 & 1.99 \\
& 80 & 2.51E-03 & 1.99 & 3.25E-03 & 1.99 & 3.31E-03 & 2.00 \\
\hline
\multirow{3}{*}{2}
& 10 & 1.54E-01 & --   & 2.34E-01 & --   & 1.81E-01 & -- \\
& 20 & 1.98E-02 & 2.96 & 2.61E-02 & 3.16 & 2.26E-02 & 3.00 \\
& 40 & 2.48E-03 & 3.00 & 3.24E-03 & 3.01 & 2.80E-03 & 3.01 \\
\hline\hline
\end{tabular}
 \label{table:linear-1d-spacerate}
\end{table}

% Table below contains e_3 (third moment)

% \begin{table}[H]
% \centering
% \renewcommand\arraystretch{1.25}
% \caption{Numerical error and spatial convergence rate for the linear SPDE \eqref{spde:linear-1d} with $a=0.5$, $b=1$.}
% \begin{tabular}{c c c c c c c c c c}
% \hline\hline
% $r$ & $N$ & $e_2$ & rate & $e_3$ & rate & $e_4$ & rate & $e_{\infty}$ & rate \\
% \hline\hline
% \multirow{4}{*}{1}
% & 10 & 1.55E-01 & --   & 1.89E-01 & --   & 2.34E-01 & --   & 2.02E-01 & -- \\
% & 20 & 3.95E-02 & 1.98 & 4.55E-02 & 2.06 & 5.11E-02 & 2.19 & 5.26E-02 & 1.94 \\
% & 40 & 9.98E-03 & 1.98 & 1.16E-02 & 1.98 & 1.30E-02 & 1.98 & 1.32E-02 & 1.99 \\
% & 80 & 2.51E-03 & 1.99 & 2.91E-03 & 1.99 & 3.25E-03 & 1.99 & 3.31E-03 & 2.00 \\
% \hline
% \multirow{3}{*}{2}
% & 10 & 1.54E-01 & --   & 1.89E-01 & --   & 2.34E-01 & --   & 1.81E-01 & -- \\
% & 20 & 1.98E-02 & 2.96 & 2.32E-02 & 3.03 & 2.61E-02 & 3.16 & 2.26E-02 & 3.00 \\
% & 40 & 2.48E-03 & 3.00 & 2.89E-03 & 3.00 & 3.24E-03 & 3.01 & 2.80E-03 & 3.01 \\
% \hline\hline
% \end{tabular}
%  \label{table:linear-1d-spacerate}
% \end{table}

\end{subsection}

\begin{subsection}{One-dimensional Stochastic Advection-Diffusion Equation with Spatially Colored Noise}
In this example, we apply our IMEX-LDG scheme to the one-dimensional stochastic advection-diffusion equation with a nonlinear reaction term driven by a space-time $\mathcal{Q}$-Wiener process:
\begin{gather}
\label{spde:advec-diffu3-1d}
    \begin{cases}
        \mathrm{d}u = \left( a u_{xx} + u_x + \sin(u) \right) \,\mathrm{d}t + bu \, \mathrm{d}W_t, \quad & (\omega,x,t)\in \Omega \times [0,2\pi] \times (0,T], \\
        u(x,0) = \sin(x),  & x \in [0,2\pi],
    \end{cases}
\end{gather}
where the noise is given by 
\begin{align}
\label{def:Q-wiener-1d-1}
    W_t = \sum_{m=1}^{\infty} \sqrt{\gamma_m} \frac{\sin(mx)}{\sqrt{\pi}}\mathcal{B}_m(t), \qquad \gamma_m = \frac{1}{m^3}.
\end{align}
In the simulation below, we fix $a=1$ and truncate the noise term in \eqref{def:Q-wiener-1d-1} by summing up to $m = 25$ with $M=4000$ Monte Carlo sample paths. 
Since the analytic solution is unavailable, the convergence rates are approximated by comparing the numerical solutions on two successive meshes, namely, by measuring the corresponding norms of $u_h-u_{h/2}$ on a coarse mesh and a refined mesh.
%successively computing the corresponding norm of $ u_h - u_{h/2}$, the difference of numerical solution on the coarse and fine mesh $N_c, N_f$. 

To study temporal convergence rate, we fix $N_t/N = 20$ and run the simulation up to $T=1$. To study the spatial convergence rate, we fix $N_t = 10^4$ and take the final time $T=0.1$. Tables \ref{table:advec-diffu3-1d-r=1-dt~h}--\ref{table:advec-diffu3-1d-spacerate} show that, for $b = 1$, the observed temporal and spatial convergence rates are close to the theoretical $1/2$-th, and $(r+1)$-th rates, respectively. When we raise the noise coefficient to $b=1.5$, the convergence rate deteriorates, especially for the higher moments of the $L^2$-error. In this case, a finer choice of $N_t/N$ and a larger number of sample paths are needed in order to recover a clear half-order temporal rate. %We thus need fine adjustment of $N_t/N$ and $M$ to recover a clean half-order rate in time, for higher noise levels. 

We would like to point out that the noise defined in \eqref{def:Q-wiener-1d-1} lacks the spatial regularity needed to prove the error estimates in Theorem \ref{thm:high-moment-error-estimate-2D}. In particular, $W_t(x) \notin L^2(\Omega, H^1(0,2\pi))$ and the regularity assumptions imposed on the exact solution in Theorem \ref{thm:high-moment-error-estimate-2D} may not be satisfied. Numerically, since we only take a finite truncation of the infinite-dimensional colored noise, the expected convergence rate is still observed, although the $e_{\infty}$ error order may deteriorate because of the highly-oscillatory modes. To further illustrate this issue, we also consider the operator eigenvalues $\gamma_m = e^{-2m}$ for which $W_t(x) \in L^2(\Omega,C^{\infty}(0,2\pi))$. The corresponding spatial accuracy test is provided in Table \ref{table:advec-diffu3-1d-spacerate}, where we observe smaller error and a clear $(r+1)$-th convergence rate.

\begin{table}[H]
\centering
\renewcommand\arraystretch{1.25}
\caption{Numerical error and temporal convergence rate for Eq. \eqref{spde:advec-diffu3-1d} with $a=1$, $r=2$ at $T=1$. } 
\begin{tabular}{c c c c c c c c}
\hline\hline
$b$ & $N$ & $e_2$ & rate & $e_4$ & rate & $e_{\infty}$ & rate \\
\hline\hline
\multirow{5}{*}{1}
& 10 & 1.28E-02 & -- & 1.52E-02 & -- & 2.44E-02 & -- \\
& 20 & 4.20E-03 & 1.60 & 5.20E-03 & 1.54 & 7.92E-03 & 1.62 \\
& 40 & 2.37E-03 & 0.82 & 3.21E-03 & 0.70 & 4.00E-03 & 0.99 \\
& 80 & 1.66E-03 & 0.52 & 2.18E-03 & 0.56 & 2.69E-03 & 0.57 \\
& 160 & 1.16E-03 & 0.52 & 1.52E-03 & 0.53 & 1.87E-03 & 0.53 \\
\hline 
\multirow{5}{*}{1.5}
& 10 & 2.11E-02 & -- & 2.64E-02 & -- & 4.06E-02 & -- \\
& 20 & 8.79E-03 & 1.26 & 1.23E-02 & 1.10 & 1.59E-02 & 1.36 \\
& 40 & 5.67E-03 & 0.63 & 9.87E-03 & 0.32 & 9.20E-03 & 0.79 \\
& 80 & 3.93E-03 & 0.53 & 5.70E-03 & 0.79 & 6.32E-03 & 0.54 \\
& 160 & 2.78E-03 & 0.50 & 4.66E-03 & 0.29 & 4.41E-03 & 0.52 \\
\hline\hline
\end{tabular}
\label{table:advec-diffu3-1d-r=1-dt~h}
\end{table}

% \begin{table}[H]
% \centering
% \renewcommand\arraystretch{1.25}
% \caption{Numerical error and temporal convergence rate for Eq. \eqref{spde:advec-diffu3-1d} with $a=1$, $r=2$ at $T=1$. } 
% \begin{tabular}{c c c c c c c c c c}
% \hline\hline
% $b$ & $N$ & $e_2$ & rate & $e_3$ & rate & $e_4$ & rate & $e_{\infty}$ & rate \\
% \hline\hline
% \multirow{5}{*}{1}
% & 10 & 1.28E-02 & -- & 1.40E-02 & -- & 1.52E-02 & -- & 2.44E-02 & -- \\
% & 20 & 4.20E-03 & 1.60 & 4.68E-03 & 1.58 & 5.20E-03 & 1.54 & 7.92E-03 & 1.62 \\
% & 40 & 2.37E-03 & 0.82 & 2.77E-03 & 0.76 & 3.21E-03 & 0.70 & 4.00E-03 & 0.99 \\
% & 80 & 1.66E-03 & 0.52 & 1.93E-03 & 0.52 & 2.18E-03 & 0.56 & 2.69E-03 & 0.57 \\
% & 160 & 1.16E-03 & 0.52 & 1.34E-03 & 0.52 & 1.52E-03 & 0.53 & 1.87E-03 & 0.53 \\
% \hline 
% \multirow{5}{*}{1.5}
% & 10 & 2.11E-02 & -- & 2.37E-02 & -- & 2.64E-02 & -- & 4.06E-02 & -- \\
% & 20 & 8.79E-03 & 1.26 & 1.04E-02 & 1.18 & 1.23E-02 & 1.10 & 1.59E-02 & 1.36 \\
% & 40 & 5.67E-03 & 0.63 & 7.30E-03 & 0.52 & 9.87E-03 & 0.32 & 9.20E-03 & 0.79 \\
% & 80 & 3.93E-03 & 0.53 & 4.78E-03 & 0.61 & 5.70E-03 & 0.79 & 6.32E-03 & 0.54 \\
% & 160 & 2.78E-03 & 0.50 & 3.50E-03 & 0.45 & 4.66E-03 & 0.29 & 4.41E-03 & 0.52 \\
% \hline\hline
% \end{tabular}
% \label{table:advec-diffu3-1d-r=1-dt~h}
% \end{table}

\begin{table}[H]
\centering
\renewcommand\arraystretch{1.25}
\caption{Numerical error and spatial convergence rate for Eq. \eqref{spde:advec-diffu3-1d} with $a=b=1$ at $T=0.1$.} 
\begin{tabular}{c | c c c c c c c c}
\hline\hline
\multicolumn{1}{c}{$\gamma_m$} & $r$ & $N$ & $e_2$ & rate & $e_4$ & rate & $e_{\infty}$ & rate \\
\hline\hline
\multirow{8}{*}{$1/m^3$} & \multirow{4}{*}{1}
 & 10 & 3.00E-02 & -- & 3.03E-02 & -- & 7.87E-02 & -- \\
& & 20 & 8.05E-03 & 1.90 & 8.12E-03 & 1.90 & 2.37E-02 & 1.73 \\
& & 40 & 1.95E-03 & 2.05 & 1.96E-03 & 2.05 & 6.41E-03 & 1.89 \\
& & 80 & 4.92E-04 & 1.99 & 4.95E-04 & 1.99 & 1.71E-03 & 1.90 \\
\cline{2-9}
& \multirow{4}{*}{2}
  & 10 & 4.96E-03 & -- & 5.37E-03 & -- & 1.20E-02 & -- \\
& & 20 & 1.02E-03 & 2.28 & 1.09E-03 & 2.30 & 2.76E-03 & 2.12 \\
& & 40 & 8.90E-05 & 3.52 & 9.36E-05 & 3.55 & 3.24E-04 & 3.09 \\
& & 80 & 1.02E-05 & 3.12 & 1.05E-05 & 3.15 & 6.63E-05 & 2.29 \\
\hline
\multirow{8}{*}{$e^{-2m}$} & \multirow{4}{*}{1}
 & 10 & 2.73E-02 & -- & 2.73E-02 & -- & 5.97E-02 & -- \\
& & 20 & 6.92E-03 & 1.98 & 6.92E-03 & 1.98 & 1.54E-02 & 1.95 \\
& & 40 & 1.74E-03 & 1.99 & 1.74E-03 & 1.99 & 3.89E-03 & 1.99 \\
& & 80 & 4.35E-04 & 2.00 & 4.35E-04 & 2.00 & 9.76E-04 & 2.00 \\
\cline{2-9}
& \multirow{4}{*}{2}
 & 10 & 1.05E-03 & -- & 1.05E-03 & -- & 4.15E-03 & -- \\
& & 20 & 1.30E-04 & 3.01 & 1.31E-04 & 3.01 & 5.69E-04 & 2.87 \\
& & 40 & 1.63E-05 & 3.00 & 1.63E-05 & 3.00 & 7.41E-05 & 2.94 \\
& & 80 & 2.03E-06 & 3.00 & 2.04E-06 & 3.00 & 9.35E-06 & 2.99 \\
\hline\hline
\end{tabular}
\label{table:advec-diffu3-1d-spacerate}
\end{table}

\end{subsection}

\begin{subsection}{One-dimensional Stochastic Advection-Diffusion Equation with Nonlinear Source and Noise Terms}
Next, we apply the IMEX-LDG scheme to the one-dimensional stochastic advection-diffusion equation with nonlinear source and noise terms:
\begin{gather}
\label{spde:advec-diffu2-1d}
    \begin{cases}
        \mathrm{d}u = \left( a u_{xx} + \sin(u_x) + \ln(1+u^2) \right) \,\mathrm{d}t + b\cos(u_x)\, \mathrm{d}W_t, \quad & (\omega,x,t)\in \Omega \times [0,2\pi] \times (0,T], \\
        u(x,0) = \sin(x),  & x \in [0,2\pi].
    \end{cases}
\end{gather}
We fix $a=1$ and test the stability and accuracy of the proposed scheme using $M=4000$ samples. To study the temporal and spatial convergence rates separately, we compute the numerical error with $N_t/N = 20$ at $T=1$ for the temporal tests, and with $N_t = 2\times 10^4$ at $ T=0.2$ for the spatial tests. Since the analytic solution is unavailable, the convergence rates are estimated by comparing numerical solutions on two successive meshes, that is, by measuring the corresponding norms of $u_h-u_{h/2}$ on a coarse mesh and a refined mesh. 

Tables \ref{table:advec-diffu2-1d-timerate}--\ref{table:advec-diffu2-1d-spacerate} show the expected optimal $(r+1)$-th order in space and half order in time. As the noise intensity $b$ increases, the numerical error becomes much larger and the scheme converges more slowly. Moreover, we observe the scheme is stable even when $b=2$, in contrast to the behavior observed for \eqref{spde:linear-1d} and \eqref{spde:linear-2d}. This is because the stability condition $2a > b^2$ only applies when $b$ is the linear growth coefficient of the stochastic diffusion term. In this example, since the term $g= b\cos(u_x)$ is bounded, the numerical scheme is stable for finite values of $b$. However, the condition $2a > b^2$ is still required to obtain optimal error estimates. This is consistent with the numerical results, where the error no longer decreases at the expected rate when the mesh is refined for $b=2$.

\begin{table}[H]
\centering
\renewcommand\arraystretch{1.25}
\caption{Numerical error and temporal convergence rate for Eq. \eqref{spde:advec-diffu2-1d} with $a=r=1$ at $T=1$.} 
\begin{tabular}{c c c c c c c c}
\hline\hline
$b$ & $N$ & $e_2$ & rate & $e_4$ & rate & $e_{\infty}$ & rate \\
\hline\hline
\multirow{5}{*}{1}
& 10 & 1.35E-02 & -- & 1.50E-02 & -- & 2.90E-02 & -- \\
& 20 & 3.83E-03 & 1.82 & 4.39E-03 & 1.77 & 8.30E-03 & 1.80 \\
& 40 & 1.49E-03 & 1.36 & 1.92E-03 & 1.19 & 2.93E-03 & 1.50 \\
& 80 & 8.92E-04 & 0.74 & 1.28E-03 & 0.59 & 1.42E-03 & 1.05 \\
& 160 & 6.09E-04 & 0.55 & 8.81E-04 & 0.53 & 8.45E-04 & 0.75 \\
\hline 
\multirow{5}{*}{1.5}
& 10 & 1.73E-02 & -- & 2.24E-02 & -- & 3.44E-02 & -- \\
& 20 & 7.29E-03 & 1.24 & 1.11E-02 & 1.02 & 1.26E-02 & 1.44 \\
& 40 & 4.71E-03 & 0.63 & 8.95E-03 & 0.31 & 6.39E-03 & 0.99 \\
& 80 & 3.32E-03 & 0.50 & 5.55E-03 & 0.69 & 4.03E-03 & 0.67 \\
& 160 & 2.38E-03 & 0.48 & 4.27E-03 & 0.38 & 2.72E-03 & 0.57 \\
\hline
\multirow{5}{*}{2}
& 10 & 1.16E-01 & -- & 1.84E-01 & -- & 1.72E-01 & -- \\
& 20 & 1.19E-01 & -0.04 & 1.71E-01 & 0.11 & 2.01E-01 & -0.23 \\
& 40 & 1.04E-01 & 0.20 & 1.51E-01 & 0.18 & 1.96E-01 & 0.04 \\
& 80 & 9.69E-02 & 0.11 & 1.36E-01 & 0.16 & 1.96E-01 & 0.00 \\
& 160 & 9.07E-02 & 0.10 & 1.34E-01 & 0.02 & 1.93E-01 & 0.02 \\
\hline\hline
\end{tabular}
\label{table:advec-diffu2-1d-timerate}
\end{table}

\begin{table}[H]
\centering
\renewcommand\arraystretch{1.25}
\caption{Numerical error and spatial convergence rate for Eq. \eqref{spde:advec-diffu2-1d} with $a=b=1$ at $T=0.2$.} 
\begin{tabular}{c c c c c c c c}
\hline\hline
$r$ & $N$ & $e_2$ & rate & $e_4$ & rate & $e_{\infty}$ & rate \\
\hline\hline
\multirow{4}{*}{1}
& 10 & 2.39E-02 & -- & 2.40E-02 & -- & 5.53E-02 & -- \\
& 20 & 6.09E-03 & 1.97 & 6.14E-03 & 1.97 & 1.49E-02 & 1.89 \\
& 40 & 1.53E-03 & 1.99 & 1.55E-03 & 1.99 & 3.78E-03 & 1.98 \\
& 80 & 3.85E-04 & 2.00 & 3.88E-04 & 2.00 & 9.49E-04 & 2.00 \\
\hline
\multirow{4}{*}{2}
& 10 & 1.09E-03 & -- & 1.25E-03 & -- & 4.58E-03 & -- \\
& 20 & 1.35E-04 & 3.02 & 1.52E-04 & 3.04 & 6.77E-04 & 2.76 \\
& 40 & 1.69E-05 & 3.00 & 1.92E-05 & 2.99 & 9.08E-05 & 2.90 \\
& 80 & 2.11E-06 & 3.00 & 2.39E-06 & 3.00 & 1.16E-05 & 2.97 \\
\hline\hline
\end{tabular}
\label{table:advec-diffu2-1d-spacerate}
\end{table}

\end{subsection}

\begin{subsection}{Two-dimensional Stochastic Heat Equation with Gradient-Type Multiplicative Noise}
We next consider the following two-dimensional analog of \eqref{spde:linear-1d} to verify the stability and accuracy of the proposed scheme:
\begin{gather}
\label{spde:linear-2d}
    \begin{cases}
        \mathrm{d}u = a (u_{xx} +u_{yy})\, \mathrm{d}t + b(u_x + u_y) \mathrm{d}W_t, \quad & (\omega,x,y,t)\in \Omega \times [0,2\pi]^2\times (0,T], \\
        u(\omega,x,y,0) = \sin(x+y),  & (\omega,x,y) \in \Omega \times [0,2\pi]^2.
    \end{cases}
\end{gather}
Equation \eqref{spde:linear-2d} admits the exact solution of the form $u(\omega,x,y,t) = e^{2b^2t-2at}\sin(x+y + 2bW_t)$. 

The computational domain $[0,2\pi]^2$ is uniformly divided into $N^2$ square cells. We fix the leading coefficient $a = 1$, and compute the numerical solution up to the final time $T = 1$ with $M=1000$ realizations. Tables \ref{table:linear-2d-r=1-dt~h}--\ref{table:linear-2d-spacerate} show that, for $b=1$, the spatial and temporal convergence rates are optimal and the error estimates in Theorem \ref{thm:high-moment-error-estimate-2D} are sharp. 

Next, we vary the size of the noise $b$ and investigate its effect on the numerical error and stability. Table \ref{table:linear-2d-r=1-dt~h} shows that the error increases as the noise magnitude becomes larger. Furthermore, the stochastic parabolic condition for Eq. \eqref{spde:linear-2d} is $a > b^2$, as opposed to $2a > b^2$ for the one-dimensional case \eqref{spde:linear-1d}. Hence we observe instability for $b=1.2$ and large $N$. For the degenerate case $a=b=1$, as noted in Remark \ref{rmk:stability-degen-2d}, Theorems \ref{thm:stability-estimate-2D} and \ref{thm:high-moment-error-estimate-2D} remain valid, because Equation \eqref{spde:linear-2d} contains neither a convection nor a source (drift) term. Numerical convergence rates verify this analytic result.

\begin{table}[H]
\centering
\renewcommand\arraystretch{1.25}
\caption{Numerical error and temporal convergence rate for Eq. \eqref{spde:linear-2d} with $a=r=1$. } 
\begin{tabular}{c c c c c c c c}
\hline\hline
$b$ & $N$ & $e_2$ & rate & $e_4$ & rate & $e_{\infty}$ & rate \\
\hline\hline
\multirow{5}{*}{0.5}
& 10  & 1.08E-02 & --   & 1.22E-02 & --    & 2.29E-02 & --    \\
& 20  & 5.99E-03 & 0.85 & 7.70E-03 & 0.66 & 1.01E-02 & 1.19 \\
& 40  & 3.99E-03 & 0.59 & 5.09E-03 & 0.60 & 5.43E-03 & 0.89 \\
& 80  & 2.72E-03 & 0.55 & 3.56E-03 & 0.51 & 3.30E-03 & 0.72 \\
& 160 & 1.92E-03 & 0.51 & 2.53E-03 & 0.50 & 2.25E-03 & 0.55 \\
\hline
\multirow{5}{*}{1}
& 10  & 1.43E-01 & --   & 1.83E-01 & --   & 2.16E-01 & --    \\
& 20  & 1.01E-01 & 0.50 & 1.34E-01 & 0.45 & 1.31E-01 & 0.72 \\
& 40  & 7.04E-02 & 0.52 & 8.95E-02 & 0.58 & 8.54E-02 & 0.62 \\
& 80  & 4.85E-02 & 0.54 & 6.35E-02 & 0.49 & 5.60E-02 & 0.61 \\
& 160 & 3.42E-02 & 0.50 & 4.51E-02 & 0.49 & 3.96E-02 & 0.50 \\
\hline
\multirow{5}{*}{1.2}
& 10  & 4.80E-01 & --   & 6.17E-01 & --   & 6.81E-01 & --  \\
& 20  & 3.47E-01 & 0.47 & 4.62E-01 & 0.42 & 4.38E-01 & 0.64 \\
& 40  & 2.43E-01 & 0.51 & 3.08E-01 & 0.59 & 2.93E-01 & 0.58 \\
& 80  & 1.68E-01 & 0.53 & 2.21E-01 & 0.48 & 1.95E-01 & 0.59 \\
& 160 & 2.02E+10 & -36.81 & 1.07E+11 & -38.81 & 1.69E+09 & -33.01 \\
\hline\hline
\end{tabular}
 \label{table:linear-2d-r=1-dt~h}
\end{table}

\begin{table}[H]
\centering
\renewcommand\arraystretch{1.25}
\caption{Numerical error and spatial convergence rate for Eq. \eqref{spde:linear-2d} with $a=b=1$.}
\begin{tabular}{c c c c c c c c}
\hline\hline
$r$ & $N$ & $e_2$ & rate & $e_4$ & rate & $e_{\infty}$ & rate \\
\hline\hline
\multirow{4}{*}{1}
& 10 & 4.68E-01 & --   & 7.04E-01 & --   & 6.03E-01 & -- \\
& 20 & 1.52E-01 & 1.63 & 1.97E-01 & 1.84 & 1.93E-01 & 1.64 \\
& 40 & 3.91E-02 & 1.96 & 5.16E-02 & 1.93 & 4.84E-02 & 1.99 \\
& 80 & 1.00E-02 & 1.96 & 1.31E-02 & 1.97 & 1.24E-02 & 1.96 \\
\hline
\multirow{3}{*}{2}
& 10 & 4.67E-01 & --   & 7.06E-01 & --   & 5.79E-01 & -- \\
& 20 & 8.01E-02 & 2.55 & 1.08E-01 & 2.71 & 9.10E-02 & 2.67 \\
& 40 & 1.00E-02 & 3.00 & 1.31E-02 & 3.04 & 1.14E-02 & 2.99 \\
\hline\hline
\end{tabular}
 \label{table:linear-2d-spacerate}
\end{table}

% \begin{table}[H]
% \centering
% \renewcommand\arraystretch{1.25}
% \caption{Numerical error and spatial convergence rate for Eq. \eqref{spde:linear-2d} with $a=b=1$.}
% \begin{tabular}{c c c c c c c c c c}
% \hline\hline
% $r$ & $N$ & $e_2$ & rate & $e_3$ & rate & $e_4$ & rate & $e_{\infty}$ & rate \\
% \hline\hline
% \multirow{4}{*}{1}
% & 10 & 4.68E-01 & --   & 5.61E-01 & --   & 7.04E-01 & --   & 6.03E-01 & -- \\
% & 20 & 1.52E-01 & 1.63 & 1.74E-01 & 1.69 & 1.97E-01 & 1.84 & 1.93E-01 & 1.64 \\
% & 40 & 3.91E-02 & 1.96 & 4.56E-02 & 1.93 & 5.16E-02 & 1.93 & 4.84E-02 & 1.99 \\
% & 80 & 1.00E-02 & 1.96 & 1.17E-02 & 1.96 & 1.31E-02 & 1.97 & 1.24E-02 & 1.96 \\
% \hline
% \multirow{3}{*}{2}
% & 10 & 4.67E-01 & --    & 5.62E-01 & --    & 7.06E-01 & --    & 5.79E-01 & -- \\
% & 20 & 8.01E-02 & 2.55 & 9.42E-02 & 2.58 & 1.08E-01 & 2.71 & 9.10E-02 & 2.67 \\
% & 40 & 1.00E-02 & 3.00 & 1.17E-02 & 3.01 & 1.31E-02 & 3.04 & 1.14E-02 & 2.99 \\
% \hline\hline
% \end{tabular}
%  \label{table:linear-2d-spacerate}
% \end{table}

\end{subsection}

\begin{subsection}{Two-dimensional Stochastic Advection-Diffusion Equation}
Finally, we consider the two-dimensional stochastic advection-diffusion equation with a nonlinear reaction/source term: 
\begin{gather}
\label{spde:advec-diffu-2d}
    \begin{cases}
        \mathrm{d}u = [ a(u_{xx} + u_{yy}) + b(u_x+u_y) + R(u)] \,\mathrm{d}t + cu \, \mathrm{d}W_t, \quad & (\omega,x,y,t)\in \Omega \times [0,2\pi]^2 \times (0,T], \\
        u(x,y,0) = \sin(x)\sin(y),  & (\omega,x,y) \in \Omega \times [0,2\pi]^2,
    \end{cases}
\end{gather}
where $R(u) = \sin(u) - h(\omega,x,y,t)$, and $h$ is chosen so that the exact solution of \eqref{spde:advec-diffu-2d} is 
$$
u(\omega,x,y,t) = e^{cW_t-0.5c^2t-2at} \sin(x+bt)\sin(y+bt).
$$ 

We partition the domain $[0,2\pi]^2$ into $N^2$ uniform square cells, fix $a=b=1$, and compute the numerical solution up to the final time $T=1$ using $M=5000$ sample paths. Tables \ref{table:advec-diffu-2d-r=1-dt~h}--\ref{table:advec-diffu-2d-spacerate} show that, for $c=1$, the method achieves the optimal $(r+1)$-th order spatial and $0.5$-th order temporal convergence rates, as predicted in Theorem \ref{thm:high-moment-error-estimate-2D}. We also investigate the effect of the noise intensity. As $c$ increases from $0.5$ to $1.5$ while the ratio $N_t/N$ is kept the same, all numerical errors become larger and the convergence becomes slower. For these larger noise levels, the scheme is still stable which is consistent with the stability result in Theorem \ref{thm:stability-estimate-2D}. For the smaller noise case $c=0.5$, a finer space-time resolution is needed in order to observe the theoretical half-order temporal convergence rate more clearly.

\begin{table}[H]
\centering
\renewcommand\arraystretch{1.25}
\caption{Numerical error and temporal convergence rate for Eq. \eqref{spde:advec-diffu-2d} with $a=b=r=1$. } 
\begin{tabular}{c c c c c c c c}
\hline\hline
$c$ & $N$ & $e_2$ & rate & $e_4$ & rate & $e_{\infty}$ & rate \\
\hline\hline
\multirow{5}{*}{0.5}
& 10 & 1.02E-02 & --   & 1.31E-02 & --   & 2.41E-02 & -- \\
& 20 & 4.94E-03 & 1.05 & 6.88E-03 & 0.93 & 9.58E-03 & 1.33 \\
& 40 & 2.66E-03 & 0.89 & 3.93E-03 & 0.81 & 4.48E-03 & 1.10 \\
& 80 & 1.52E-03 & 0.81 & 2.28E-03 & 0.79 & 2.37E-03 & 0.92 \\
& 160 & 9.59E-04 & 0.66 & 1.49E-03 & 0.61 & 1.42E-03 & 0.73 \\
\hline
\multirow{5}{*}{1}
& 10 & 2.16E-02 & --   & 4.39E-02 & --   & 3.21E-02 & -- \\
& 20 & 1.39E-02 & 0.63 & 3.08E-02 & 0.51 & 1.70E-02 & 0.92 \\
& 40 & 9.21E-03 & 0.60 & 2.19E-02 & 0.49 & 1.03E-02 & 0.73 \\
& 80 & 6.06E-03 & 0.61 & 1.42E-02 & 0.62 & 6.58E-03 & 0.64 \\
& 160 & 4.38E-03 & 0.47 & 1.05E-02 & 0.43 & 4.58E-03 & 0.52 \\
\hline
\multirow{5}{*}{1.5}
& 10 & 5.20E-02 & --   & 1.37E-01 & --   & 4.99E-02 & -- \\
& 20 & 3.73E-02 & 0.48 & 1.04E-01 & 0.40 & 2.99E-02 & 0.74 \\
& 40 & 2.63E-02 & 0.51 & 8.21E-02 & 0.34 & 1.94E-02 & 0.63 \\
& 80 & 1.72E-02 & 0.62 & 5.04E-02 & 0.70 & 1.28E-02 & 0.60 \\
& 160 & 1.26E-02 & 0.44 & 3.72E-02 & 0.44 & 9.07E-03 & 0.49 \\
\hline\hline
\end{tabular}
\label{table:advec-diffu-2d-r=1-dt~h}
\end{table}

\begin{table}[H]
\centering
\renewcommand\arraystretch{1.25}
\caption{Numerical error and spatial convergence rate for Eq. \eqref{spde:advec-diffu-2d} with $a=b=c=1$.}
\begin{tabular}{c c c c c c c c}
\hline\hline
$r$ & $N$ & $e_2$ & rate & $e_4$ & rate & $e_{\infty}$ & rate \\
\hline\hline
\multirow{4}{*}{1} 
& 10 & 7.15E-02 & --   & 1.26E-01 & --   & 9.97E-02 & -- \\
& 20 & 1.07E-02 & 2.74 & 2.56E-02 & 2.29 & 1.31E-02 & 2.93 \\
& 40 & 2.30E-03 & 2.21 & 4.88E-03 & 2.39 & 2.98E-03 & 2.14 \\
& 80 & 6.05E-04 & 1.93 & 1.39E-03 & 1.82 & 7.62E-04 & 1.97 \\
\hline
\multirow{3}{*}{2} 
& 10 & 7.13E-02 & --   & 1.26E-01 & --   & 9.02E-02 & -- \\
& 20 & 4.88E-03 & 3.87 & 1.11E-02 & 3.50 & 5.16E-03 & 4.13 \\
& 40 & 6.00E-04 & 3.02 & 1.38E-03 & 3.01 & 6.26E-04 & 3.04 \\
\hline\hline
\end{tabular}
 \label{table:advec-diffu-2d-spacerate}
\end{table}

% \begin{table}[H]
% \centering
% \renewcommand\arraystretch{1.25}
% \caption{Numerical error and spatial convergence rate for Eq. \eqref{spde:advec-diffu-2d} with $a=b=c=1$ at $T=1$.}
% \begin{tabular}{c c c c c c c c c c}
% \hline\hline
% $r$ & $N$ & $e_2$ & rate & $e_3$ & rate & $e_4$ & rate & $e_{\infty}$ & rate \\
% \hline\hline
% \multirow{4}{*}{1} 
% & 10 & 7.15E-02 & -- & 9.76E-02 & -- & 1.26E-01 & -- & 9.97E-02 & -- \\
% & 20 & 1.07E-02 & 2.74 & 1.76E-02 & 2.47 & 2.56E-02 & 2.29 & 1.31E-02 & 2.93 \\
% & 40 & 2.30E-03 & 2.21 & 3.58E-03 & 2.30 & 4.88E-03 & 2.39 & 2.98E-03 & 2.14 \\
% & 80 & 6.05E-04 & 1.93 & 9.77E-04 & 1.87 & 1.39E-03 & 1.82 & 7.62E-04 & 1.97 \\
% \hline
% \multirow{3}{*}{2} 
% & 10 & 7.13E-02 & -- & 9.76E-02 & -- & 1.26E-01 & -- & 9.02E-02 & -- \\
% & 20 & 4.88E-03 & 3.87 & 7.85E-03 & 3.64 & 1.11E-02 & 3.50 & 5.16E-03 & 4.13 \\
% & 40 & 6.00E-04 & 3.02 & 9.72E-04 & 3.01 & 1.38E-03 & 3.01 & 6.26E-04 & 3.04 \\
% \hline\hline
% \end{tabular}
%  \label{table:advec-diffu-2d-spacerate}
% \end{table}

\end{subsection}

\end{section}

\begin{section}{Concluding Remarks}
    In this paper, we proposed and analyzed the fully-discrete IMEX-LDG scheme for a class of quasilinear stochastic convection-diffusion equations on two-dimensional Cartesian meshes. We proved high-moment stability estimates for the fully-discrete scheme, together with optimal error estimates of order $\mathcal{O}(h^{r+1})$ in space and $\mathcal{O}(k^{{1}/{2}})$ in time in the semilinear setting. Pathwise error estimates were also established by combining the high-moment error estimate and the discrete Kolmogorov lemma. Numerical experiments were performed to confirm the theoretical results of the numerical methods. Although the analysis was carried out in two spatial dimensions for clarity, the framework extends to higher-dimensional Cartesian meshes.
\end{section}

\begin{appendices}

\section{Proofs for High-Moment Stability and Error Estimates}

\label{appendixA}

% \subsection{Proof of Lemma \ref{lem:convex-ineq}}
% \label{appendix-convex-ineq}
% \begin{proof}
% The two inequalities are clearly true for $q=1$. Now we consider $q > 1$. Since $f(x) = x^q$ is a convex function, by standard Jensen's inequality, we have for any $ 0 < \lambda < 1$, 
% \[ f( \lambda a + (1-\lambda)b) \leq \lambda f(a) + (1-\lambda)f(b) \quad \forall a, b \geq 0. \]
% To show inequality \eqref{ineq:convex-1}, $\forall a, b \geq 0$, $\forall \epsilon > 0$, we have
% \begin{align*}
%     (a+b)^q &= (\lambda \frac{a}{\lambda} + (1-\lambda) \frac{b}{1-\lambda})^q \leq \lambda  \frac{a^q}{\lambda^q} + (1-\lambda) \frac{b^q}{(1-\lambda)^q} \\
%     &=\lambda^{1-q} \,a^q + (1-\lambda)^{1-q} \,b^q := C_1(\epsilon)a^q + (1+\epsilon)b^q.  
% \end{align*}
% Thus we get $C_1(\epsilon) = \left(1 - (1+\epsilon)^{\frac{1}{1-q}} \right)^{1-q}$. For inequality \eqref{ineq:convex-3}, we similarly have
% \begin{align*}
%     (a+b+c)^q &\leq \lambda^{1-q} \,a^q + (1-\lambda)^{1-q}\,(b+c)^q \\
%     &\leq \lambda^{1-q} \,a^q + 2^{q-1}(1-\lambda)^{1-q}\,(b^q + c^q) := C_2(\epsilon)a^q + (2^{q-1} + \epsilon)(b^q + c^q).
% \end{align*}
% Hence we get $C_2(\epsilon) = \left(1-2 \left(2^{q-1}+\epsilon \right)^{\frac{1}{1-q}} \right)^{1-q}$.
% \end{proof}

\subsection{Choice of $\{ \epsilon_i: i = 1,\ldots,6 \}$ in Theorem \ref{thm:stability-estimate-2D}}
\label{appendix-eps-choice}
We claim that if $\alpha > \alpha_0 := \Bigl(2^{2q-1}C_b' + 2^{5q-4}C_b \Bigr)^{1/q}C_b^{1/q}D_4^2K$,
%KD_4^2(\frac{1}{2}C_b^2 + 2^{3q-4}C_b^2)^{\frac{1}{q}}
then there exist $\{\epsilon_i >0: i=1,\ldots,6\}$, such that the coefficients
\begin{align*}
    \mathcal{C}_1 &= \frac{1}{2^q}-\frac{2^q(2^{q-1}+\epsilon_3)}{4\epsilon_5}, \quad \mathcal{C}_2 = \left(\frac{1}{2}-\frac{1}{\epsilon_2} \right)^q, \\
    \mathcal{C}_3 &= \left(\alpha - \epsilon_1 B_3^2 \right)^q
    - (1+\epsilon_4)(2^{q-1}+\epsilon_3)\epsilon_2^q C_b'C_{b}K^qD_4^{2q} 
    - (1+\epsilon_6)(2^{q-1}+\epsilon_3) 2^q\epsilon_5C_b^2 K^q D_4^{2q} 
\end{align*}
are all positive.
\begin{proof} %\hfill
% \begin{itemize}

% \item $\Leftarrow$: Assume there exist such $\epsilon_i$'s such that $\mathcal{C}_1, \mathcal{C}_2, \mathcal{C}_3 > 0$. Then $\mathcal{C}_3 > 0$ implies 
% \begin{align*}
%     \alpha^q > \left(\alpha - \epsilon_1 B_3^2 \right)^q > (2^{q-1}+\epsilon_3)\epsilon_2^q C_{b}^2K^qD_4^{2q}(1+\epsilon_4) + (2^{q-1}+\epsilon_3) \epsilon_5C_b^2 K^q D_4^{2q} (1+\epsilon_6) > \alpha_0^q,
% \end{align*}
% where $\epsilon_5 > 2^{2q-3}$ and $\epsilon_2 > \frac{1}{2}$ due to the condition $\mathcal{C}_1, \mathcal{C}_2 > 0$. 

% \item $\Rightarrow$: 
% Given $ \alpha >\alpha_0$, we show the existence of $\{\epsilon_i: i = 1,\ldots,6\}$. For example, we can set
First choose $\epsilon_2>2$ and $\epsilon_5>2^{3q-3}$. Then $\mathcal{C}_2>0$, and by taking $\epsilon_3>0$ sufficiently small we also ensure $\mathcal{C}_1>0$.

Next choose $\epsilon_1>0$ so that
\[
\epsilon_1 B_3^2 = \frac{\alpha-\alpha_0}{2}
\qquad\text{if } B_3>0.
\]
If $B_3=0$, we simply choose any $\epsilon_1>0$, since in that case $\epsilon_1B_3^2=0$. Then
\[
\alpha-\epsilon_1B_3^2 \ge \frac{\alpha+\alpha_0}{2},
\]
and hence
\[
(\alpha-\epsilon_1B_3^2)^q
\ge \alpha_0^q + \Bigl(\frac{\alpha-\alpha_0}{2}\Bigr)^q.
\]
If $\epsilon_2$ and $\epsilon_5$ are chosen arbitrarily close to their lower bounds $2$ and $2^{3q-3}$, respectively, and if $\epsilon_3,\epsilon_4,\epsilon_6$ are taken sufficiently small, then
\begin{align*}
&(1+\epsilon_4)(2^{q-1}+\epsilon_3)\epsilon_2^q C_b'C_bK^qD_4^{2q}
+ (1+\epsilon_6)(2^{q-1}+\epsilon_3)2^q\epsilon_5 C_b^2K^qD_4^{2q}
\end{align*}
can be made arbitrarily close to
\[
2^{2q-1} C_b'C_bK^qD_4^{2q} + 2^{5q-4} C_b^2K^qD_4^{2q}
= \alpha_0^q,
\]
which leads to
\begin{align*}
&(1+\epsilon_4)(2^{q-1}+\epsilon_3)\epsilon_2^q C_b'C_bK^qD_4^{2q}
+ (1+\epsilon_6)(2^{q-1}+\epsilon_3)2^q\epsilon_5 C_b^2K^qD_4^{2q} 
< \alpha_0^q + \Bigl(\frac{\alpha-\alpha_0}{2}\Bigr)^q.
\end{align*}
Consequently, we obtain $\mathcal{C}_3>0$.

Therefore, there exist $\{\epsilon_i>0: i=1,\ldots,6\}$ such that $\mathcal{C}_1,\mathcal{C}_2,\mathcal{C}_3$ are all positive. 
\end{proof}

\subsection{Proof of Lemma \ref{lem:error2D-term1-2}}
\label{appendix:term1-2}

\begin{proof}
Let $\epsilon_0,\epsilon_1>0$ be arbitrary. We first estimate
\[
\sum_{n=0}^{m}\int_{t_n}^{t_{n+1}}
\|\psi(\cdot,t,u,v_1,v_2)-\psi(\cdot,t_n,u_h^n,v_{1,h}^n,v_{2,h}^n)\|^2\, \mathrm{d}t.
\]
For each $t\in[t_n,t_{n+1}]$, we decompose
\begin{align*}
&\psi(\cdot,t,u,v_1,v_2)-\psi(\cdot,t_n,u_h^n,v_{1,h}^n,v_{2,h}^n) 
=
\bigl[\psi(\cdot,t,u,v_1,v_2)-\psi(\cdot,t,u^n,v_1^n,v_2^n)\bigr] \\
&\qquad\quad +
\bigl[\psi(\cdot,t,u^n,v_1^n,v_2^n)-\psi(\cdot,t_n,u^n,v_1^n,v_2^n)\bigr] 
+
\bigl[\psi(\cdot,t_n,u^n,v_1^n,v_2^n)-\psi(\cdot,t_n,u_h^n,v_{1,h}^n,v_{2,h}^n)\bigr].
\end{align*}
Using the elementary inequality
\[
\|a+b+c\|^2 \le C\bigl(\|a\|^2+\|b\|^2\bigr) + (1+\epsilon_0)\|c\|^2,
\]
we obtain
\begin{align*}
&\|\psi(\cdot,t,u,v_1,v_2)-\psi(\cdot,t_n,u_h^n,v_{1,h}^n,v_{2,h}^n)\|^2 \\
&\qquad \le
C\|\psi(\cdot,t,u,v_1,v_2)-\psi(\cdot,t,u^n,v_1^n,v_2^n)\|^2 \\
&\qquad\quad + C\|\psi(\cdot,t,u^n,v_1^n,v_2^n)-\psi(\cdot,t_n,u^n,v_1^n,v_2^n)\|^2 \\
&\qquad\quad + (1+\epsilon_0)\|\psi(\cdot,t_n,u^n,v_1^n,v_2^n)-\psi(\cdot,t_n,u_h^n,v_{1,h}^n,v_{2,h}^n)\|^2.
\end{align*}
By hypothesis {\rm(iii)},
\begin{align*}
\int_{t_n}^{t_{n+1}}
\|\psi(\cdot,t,u,v_1,v_2)-\psi(\cdot,t,u^n,v_1^n,v_2^n)\|^2\, \mathrm{d}t
&\le 3B_1^2\int_{t_n}^{t_{n+1}}\|u-u^n\|_1^2\, \mathrm{d}t,
\end{align*}
and
\begin{align*}
\int_{t_n}^{t_{n+1}}
\|\psi(\cdot,t,u^n,v_1^n,v_2^n)-\psi(\cdot,t_n,u^n,v_1^n,v_2^n)\|^2\,\mathrm{d}t
&\le C k^2(1+\|u^n\|_1^2).
\end{align*}
For the last term, hypothesis {\rm(iii)} again yields
\begin{align*}
&\|\psi(\cdot,t_n,u^n,v_1^n,v_2^n)-\psi(\cdot,t_n,u_h^n,v_{1,h}^n,v_{2,h}^n)\|^2 
\le B_1^2\bigl(\|e_u^n\|+\|e_{v_1}^n\|+\|e_{v_2}^n\|\bigr)^2.
\end{align*}
Using $e_u^n=\xi_u^n-\eta_u^n$, $e_{v_i}^n=\xi_{v_i}^n-\eta_{v_i}^n$, together with Young's inequality and Lemma \ref{lem:proj-property-2d}, we infer
\begin{align*}
&\|\psi(\cdot,t_n,u^n,v_1^n,v_2^n)-\psi(\cdot,t_n,u_h^n,v_{1,h}^n,v_{2,h}^n)\|^2 \\
&\qquad \le
C\|\xi_u^n\|^2
+ (2+\epsilon_0)B_1^2\bigl(\|\xi_{v_1}^n\|^2+\|\xi_{v_2}^n\|^2\bigr)
+ Ch^{2r+2}\bigl(\|u^n\|_{r+1}^2+\|v_1^n\|_{r+1}^2+\|v_2^n\|_{r+1}^2\bigr).
\end{align*}
Combining the above bounds and summing over $n=0,\ldots,m$, we obtain
\begin{align*}
&\sum_{n=0}^{m}\int_{t_n}^{t_{n+1}}
\|\psi(\cdot,t,u,v_1,v_2)-\psi(\cdot,t_n,u_h^n,v_{1,h}^n,v_{2,h}^n)\|^2\,\mathrm{d}t \\
&\qquad \le
C\sum_{n=0}^{m}\int_{t_n}^{t_{n+1}}\|u-u^n\|_1^2\,\mathrm{d}t
+ Ck^2\sum_{n=0}^{m}(1+\|u^n\|_1^2)
+ Ck\sum_{n=0}^{m}\|\xi_u^n\|^2 \\
&\qquad\quad
+ (2+\epsilon_0)B_1^2\,k\sum_{n=0}^{m}\bigl(\|\xi_{v_1}^n\|^2+\|\xi_{v_2}^n\|^2\bigr)
+ Ch^{2r+2}k\sum_{n=0}^{m}\bigl(\|u^n\|_{r+1}^2+\|v_1^n\|_{r+1}^2+\|v_2^n\|_{r+1}^2\bigr),
\end{align*}
which proves \eqref{ineq:error2D-psi}. The proof of \eqref{ineq:error2D-g} is analogous by using hypothesis {\rm(iv)} in place of {\rm(iii)}.
\end{proof}

\subsection{Proof of Lemma \ref{lem:w_h-v_h-2D}}
\label{appendix-w_h-v_h-2d}

\begin{proof} 
We only prove the estimate for $\|\xi_{w_1}^{n+1}\|^2$, since the estimate for $\|\xi_{w_2}^{n+1}\|^2$ is obtained in the same way. By \eqref{eq:error-3-2d}, we have
\begin{align*}
    (\xi_{w_1}^{n+1},z_h)
    &=
    (\eta_{w_1}^{n+1},z_h)
    + \bigl(a_{11}^{n+1}\xi_{v_1}^{n+1},z_h\bigr)
    - \bigl(a_{11}^{n+1}\eta_{v_1}^{n+1},z_h\bigr) 
    + \bigl(a_{12}^{n+1}\xi_{v_2}^{n+1},z_h\bigr)
    - \bigl(a_{12}^{n+1}\eta_{v_2}^{n+1},z_h\bigr).
\end{align*}
Choosing $z_h=\xi_{w_1}^{n+1}$ and applying the Cauchy-Schwarz inequality, Young's inequality, hypothesis {\rm(ii)}, and Lemma \ref{lem:proj-property-2d}, we obtain
\begin{align*}
    \|\xi_{w_1}^{n+1}\|^2
    &\leq
    5\|\eta_{w_1}^{n+1}\|^2
    + 5\Lambda\|\xi_{v_1}^{n+1}\|^2
    + 5\Lambda\|\eta_{v_1}^{n+1}\|^2
    + 5\Lambda\|\xi_{v_2}^{n+1}\|^2
    + 5\Lambda\|\eta_{v_2}^{n+1}\|^2 \\
    &\leq
    Ch^{2r+2}\bigl(\|w_1^{n+1}\|_{r+1}^2 + \|v_1^{n+1}\|_{r+1}^2 + \|v_2^{n+1}\|_{r+1}^2\bigr)
    + 5\Lambda\bigl(\|\xi_{v_1}^{n+1}\|^2 + \|\xi_{v_2}^{n+1}\|^2\bigr).
\end{align*}
Summing over $n=0,\ldots,m$ gives
\begin{align*}
    k\sum_{n=0}^m \|\xi_{w_1}^{n+1}\|^2
    &\leq
    5\Lambda\,k\sum_{n=0}^m \bigl(\|\xi_{v_1}^{n+1}\|^2 + \|\xi_{v_2}^{n+1}\|^2\bigr) \\
    &\quad
    + Ch^{2r+2}k\sum_{n=0}^m
    \bigl(\|w_1^{n+1}\|_{r+1}^2 + \|v_1^{n+1}\|_{r+1}^2 + \|v_2^{n+1}\|_{r+1}^2\bigr).
\end{align*}

By the same argument,
\begin{align*}
    k\sum_{n=0}^m \|\xi_{w_2}^{n+1}\|^2
    &\leq
    5\Lambda\,k\sum_{n=0}^m \bigl(\|\xi_{v_1}^{n+1}\|^2 + \|\xi_{v_2}^{n+1}\|^2\bigr) \\
    &\quad
    + Ch^{2r+2}k\sum_{n=0}^m
    \bigl(\|w_2^{n+1}\|_{r+1}^2 + \|v_1^{n+1}\|_{r+1}^2 + \|v_2^{n+1}\|_{r+1}^2\bigr).
\end{align*}
Adding the two inequalities and inserting the nonnegative term
$
Ch^{2r+2}k\sum_{n=0}^m \|u^{n+1}\|_{r+1}^2
$
for later convenience, we obtain \eqref{ineq:wh-vh-2d}.
\end{proof}

\subsection{Proof of the estimate for $\mathcal{I}_3$ in \eqref{ineq:error-estimate-term3-2D}}
\label{appendix:error-estimate-term3}
\begin{proof}
Since $w_1$ and $w_2$ are smooth exact solutions, the definition of $H^+$ and integration by parts give
\begin{align*}
    \mathcal{I}_3 &\leq \sum_{n=0}^{m} \int_{t_n}^{t_{n+1}}  \left|H^+(w_1-w_1^{n+1}, w_2-w_2^{n+1}, \xi_u^{n+1}) \right| \, \mathrm{d}t \\
    &= \sum_{n=0}^{m} \int_{t_n}^{t_{n+1}}  \left|((w_1-w_1^{n+1})_x + (w_2-w_2^{n+1})_y, \xi_u^{n+1}) \right| \, \mathrm{d}t \\
    &\leq \sum_{n=0}^{m} \int_{t_n}^{t_{n+1}}  \left\| (w_1-w_1^{n+1})_x \right\|^2 + \left\| (w_2-w_2^{n+1})_y \right\|^2 \, \mathrm{d}t + k\sum_{n=0}^{m}  \|\xi_u^{n+1}\|^2. 
\end{align*}

We only provide a detailed estimate for $\| (w_1-w_1^{n+1})_x \|^2$, as the estimate for $\| (w_2-w_2^{n+1})_y\|^2$ can be obtained similarly. Let $a_{ij}^{n+1} = a_{ij}(x,y,t_{n+1})$. Since $w_1(x,y,t)=a_{11}(x,y,t)v_1 + a_{12}(x,y,t)v_2$, we decompose
\begin{align*}
    (w_1(t)-w_1^{n+1})_x
    &= \bigl[a_{11}(v_1)_x-a_{11}^{n+1}(v_1)_x^{n+1}+(a_{11})_xv_1-(a_{11}^{n+1})_xv_1^{n+1}\bigr] \\
    &\quad + \bigl[a_{12}(v_2)_x-a_{12}^{n+1}(v_2)_x^{n+1}+(a_{12})_xv_2-(a_{12}^{n+1})_xv_2^{n+1}\bigr] \\
    &=: R_1 + R_2.
\end{align*}
By hypothesis {\rm(vi)}, we have
\begin{align*}
    R_1 
    % &:= a_{11}(v_1)_x - a_{11}^{n+1}(v_1)_x^{n+1} + (a_{11})_x v_1 - (a_{11}^{n+1})_x v_1^{n+1} \\
    &= a_{11}(v_1 -v_1^{n+1})_x + (a_{11}-a_{11}^{n+1})(v_1^{n+1})_x + (a_{11})_x(v_1 - v_1^{n+1}) + (a_{11} - a_{11}^{n+1})_x v_1^{n+1} \\
    &\leq C|(v_1 -v_1^{n+1})_x| + C|t-t_{n+1}|^{\frac{1}{2}}|(v_1^{n+1})_x| + C|v_1-v_1^{n+1}| + C|t-t_{n+1}|^{\frac{1}{2}}|v_1^{n+1}|. 
\end{align*}
Using $v_1=u_x$, we have
\begin{align*}
    \sum_{n=0}^{m} \int_{t_n}^{t_{n+1}}  \| R_1 \|^2 \, \mathrm{d}t \leq C\sum_{n=0}^{m} \int_{t_n}^{t_{n+1}} \|u(t)-u^{n+1}\|_{2}^2 \, \mathrm{d}t + C k^2\sum_{n=0}^{m}\|v_1^{n+1}\|_{1}^2.
\end{align*}
By the same argument, we have
\begin{align*}
    \sum_{n=0}^{m} \int_{t_n}^{t_{n+1}}  \| R_2 \|^2 \, \mathrm{d}t \leq C\sum_{n=0}^{m} \int_{t_n}^{t_{n+1}} \|u(t)-u^{n+1}\|_{2}^2 \, \mathrm{d}t + C k^2\sum_{n=0}^{m}\|v_2^{n+1}\|_{1}^2.
\end{align*}
Hence
\begin{align*}
    \sum_{n=0}^{m} \int_{t_n}^{t_{n+1}}  \| (w_1-w_1^{n+1})_x \|^2 \, \mathrm{d}t \leq C\sum_{n=0}^{m} \int_{t_n}^{t_{n+1}} \|u(t)-u^{n+1}\|_{2}^2 \, \mathrm{d}t + Ck^2\sum_{n=0}^{m} (\|v_1^{n+1}\|_{1}^2 + \|v_2^{n+1}\|_{1}^2).
\end{align*}
Clearly, the same estimate holds for $\sum_{n=0}^{m} \int_{t_n}^{t_{n+1}}  \| (w_2-w_2^{n+1})_y \|^2 \, \mathrm{d}t$. Combining the above bounds, we obtain the estimate of $\mathcal{I}_3$ in \eqref{ineq:error-estimate-term3-2D}. 
% \begin{align*}
%     \mathcal{I}_3 \leq k\sum_{n=0}^{m}  \|\xi_u^{n+1}\|^2 + C\sum_{n=0}^{m} \int_{t_n}^{t_{n+1}} \|u(t)-u^{n+1}\|_{2}^2 \, dt + Ck^2\sum_{n=0}^{m} (\|v_1^{n+1}\|_{1}^2 + \|v_2^{n+1}\|_{1}^2).
% \end{align*}
\end{proof}

\subsection{Proof of Lemma \ref{lem:xi_v^0-2d}}
\label{appendix:xi_v^0-2d}
\begin{proof}
    We split the numerical error $e_u^0 = u_0 - u_h^0$ and $e_{v_i}^0 = v_i^0 - v_{i,h}^0$ as
    $e_u^0 = \xi_u^0 - \eta_u^0$ and $e_{v_i}^0 = \xi_{v_i}^0 - \eta_{v_i}^0$ for $i = 1,2,$,
    where $\eta_u^0,\eta_{v_i}^0$ denote the projection error. Clearly, we have $\| \xi_{u}^0\| = \|\mathcal{P}^- u_0 - u_{h}^0\| \leq Ch^{r+1}$ by \eqref{ineq:proj-property-1-2d}. Taking the error equation \eqref{eq:error-2-2d} at $t=t^0$, we have 
    \[ (e_{v_1}^{0},p_h) + (e_{v_2}^{0},q_h) = L^-(e_u^{0},p_h,q_h). \]
    Taking $p_h = \xi_{v_1}^0$ and $q_h = \xi_{v_2}^0$, and by the definition of $L^2$-projection, we get
    \[ \| \xi_{v_1}^0\|^2 + \| \xi_{v_2}^0\|^2 = L^-(\xi_u^{0},\xi_{v_1}^0,\xi_{v_2}^0) - L^-(\eta_u^{0},\xi_{v_1}^0,\xi_{v_2}^0). \]
    By \eqref{ineq:alternating-flux-2d} of Lemma \ref{lem:num-flux-2D}, Cauchy's inequality and \eqref{ineq:proj-property-1-2d}
    \begin{align*}
        |L^-(\xi_u^{0},\xi_{v_1}^0,\xi_{v_2}^0)| &\leq \left(\| \nabla \xi_u^{0}\| + \mu h^{-\frac{1}{2}} \| [\xi_u^{0}]\|_{\Gamma_h} \right) \left(\| \xi_{v_1}^0\| + \| \xi_{v_2}^0\| \right) \\
        &\leq \frac{1}{8}\left(\| \xi_{v_1}^0\| + \| \xi_{v_2}^0\| \right)^2 + 4\left(\| \nabla \xi_u^{0}\|^2 + \mu^2 h^{-1} \| [\xi_u^{0}]\|_{\Gamma_h}^2 \right) \leq \frac{1}{8}\left(\| \xi_{v_1}^0\| + \| \xi_{v_2}^0\| \right)^2 + Ch^{2r+2}. 
    \end{align*}
    By \eqref{ineq:superconvergence-1} of Lemma \ref{lem:superconvergence}
    \begin{align*}
        |L^-(\eta_u^{0},\xi_{v_1}^0,\xi_{v_2}^0)| \leq Ch^{r+1} \| u_0\|_{{r+2}} \left(\| \xi_{v_1}^0\|+\| \xi_{v_2}^0\| \right) \leq \frac{1}{8}\left(\| \xi_{v_1}^0\| + \| \xi_{v_2}^0\| \right)^2 + Ch^{2r+2}. 
    \end{align*}
    Combining the estimates for the above two terms, we obtain 
    \[ \| \xi_{v_1}^0\|^2 + \| \xi_{v_2}^0\|^2 \leq Ch^{2r+2}. \]
\end{proof}

\end{appendices}

% \newpage

\printbibliography

\end{document}